\theoremstyle{plain}
\newtheorem{thm}{Theorem}[section]
\newtheorem{lem}[thm]{Lemma}
\newtheorem{cor}[thm]{Corollary}
\newtheorem{prop}[thm]{Proposition}
\theoremstyle{definition}
\newtheorem{rem}[thm]{Remark}
\newcommand{\R}{\mathbb{R}}
\newcommand{\C}{\mathbb{C}}
\newcommand{\N}{\mathbb{N}}
\newcommand{\HF}{\mathrm{HF}}
\newcommand{\CF}{\mathrm{CF}}
\newcommand{\Ham}{\mathrm{Ham}}
\title{Bounding the Lagrangian Hofer metric via barcodes}
\author{Patricia Dietzsch}
\address{Department of Mathematics\\
  ETH Zürich\\
  Rämistrasse 101, 8092 Zürich, Switzerland}
\email{patricia.dietzsch@math.ethz.ch}
\thanks{The author was partially supported by the Swiss National
Science Foundation (grant number  200021\_204107)}
\begin{document}
\maketitle

\tikzset{middlearrow/.style={
        decoration={markings,
            mark= at position 0.5 with {\arrow{#1}} ,
        },
        postaction={decorate}
    }
}

%%%%%%%%%%%%%%%%%%%% Abstract %%%%%%%%%%%%%%%%%%%%%%%%
\begin{abstract}
We provide an upper bound on the Lagrangian Hofer distance between equators in the cylinder in terms of the barcode of persistent Floer homology. The bound consists of a weighted sum of the lengths of the finite bars and the spectral distance.
\end{abstract}

%%%%%%%%%%%%%%%%%%%% Introduction %%%%%%%%%%%%%%%%%%%%
\section{Introduction and main results}
Let $(M,\omega=-\mathrm{d}\lambda)$ be an exact symplectic manifold.
Consider the group $\Ham (M)$ of compactly supported Hamiltonian diffeomorphisms.
Any compactly supported Hamiltonian function $H\in C^{\infty}([0,1] \times M)$
generates a Hamiltonian flow $\{\phi_t^H\}$.
\begin{comment}
defined by
\[
    \frac{\mathrm{d}}{\mathrm{d}t}\phi_t^H(x) = X_t^H(\phi_t^H(x)),
    \qquad \omega(X_t^H, -) = -\mathrm{d}H_t.
\]
\end{comment}
The Hofer norm of a Hamiltonian diffeomorphism $\phi \in \Ham(M)$
is given by
\[
    \vert \vert \phi \vert \vert _H = \inf
    \left \{ \int_0^1 \max_{x\in M} H_t(x) - \min_{x\in M} H_t(x) \, \mathrm{d}t
    \, \big \vert \, \phi_1^H = \phi  \right\}.
\]
Let $L$ and $L'$ be closed connected Lagrangian submanifolds in $M$ that are Hamiltonian isotopic. 
The Lagrangian Hofer distance between $L$ and $L'$ is defined by 
%infimizing the Hofer norm over all Hamiltonian diffeomorphisms that take $L$ to $L'$:
\[
    d_H(L,L') = \inf \left\{ \vert \vert \phi \vert \vert_H \,\big \vert \, \phi(L)=L' \right\} .
\]

For transversely intersecting and exact Lagrangians $L$ and $L'$ we consider the Floer complex $\CF (L,L')$ over $\mathbb{Z}_2$.
A choice of primitives of the exact $1$-forms $\lambda \vert_{L}$ and $\lambda \vert_{L'}$
gives rise to an action functional $\mathcal{A}$ that induces a filtration on the Floer complex $\CF(L,L')$. Therefore, the homology group $\HF(L,L')$
of $\CF(L,L')$ becomes a persistence module
$\HF^{\leq \bullet}(L,L')$. The barcode $\mathcal{B}(L,L')$ associated with it gives rise to a number of invariants for the pair $(L,L')$.
One of them is the Lagrangian spectral metric $\gamma(L,L')$, which is the largest distance between two infinite bars in $\mathcal{B}(L,L')$
\cite{Viterbo, KislevShelukhin}. The boundary depth $\beta_1(L,L')$, studied
in \cite{usher}, is the length of the longest finite bar.
In this paper, we also consider the lengths of the other finite bars.
We denote by
\[
    \beta_1(L,L') \geq \beta_2(L,L') \geq \dots \geq \beta_{k}(L,L')
\]
the lengths of the finite bars ordered by their size.
%If $L$ and $L'$ do not intersect transversely, $\gamma(L,L')$ and $\beta_i(L,L')$
%can still be defined, by perturbing $L'$ by arbitrarily Hofer-small Hamiltonian diffeomorphisms $\phi$.
While the barcode $\mathcal{B}(L,L')$ actually depends on the choice of primitives
of $\lambda \vert_L$ and $\lambda \vert_{L'}$,
the numbers $\gamma(L,L')$ and $\beta_i(L,L')$, $i \in \{ 1, \dots, k \}$, are independent of it.
Kislev-Shelukhin \cite{KislevShelukhin} proved the following inequalities 
%\textit{Check out for which mfds!}
%For all weakly exact Lagr, in fact for all wide weakly monotone Lagr. In particular for exact Lagr
\begin{align}\label{eq:beta_vs_gamma}
    \beta_1(L,L') \leq \gamma(L,L') \leq d_H(L,L').
\end{align}

In this paper we prove a converse inequality for equators in the cylinder.
From now on, we work in $\Sigma:=S^1 \times (-1,1)$. In local coordinates $(q,p)$, the standard symplectic form on $\Sigma$ is $\omega = \mathrm{d}q \wedge \mathrm{d}p = -\mathrm{d}\lambda$ for
$\lambda=p\mathrm{d}q$. Let $L_0=S^1\times \{0\} \subset \Sigma$ denote the zero-section. We are interested into the set $\mathcal{L}(L_0)$ of all Lagrangians $L\subset \Sigma$ which are Hamiltonian isotopic to $L_0$.  

Our main result is:

\begin{thm}\label{thm:main}
Suppose that $L,L'\in \mathcal{L}(L_0)$ intersect transversely in $2n$ points. Then 
\[
    d_H(L,L') \leq \sum_{j=1}^{n-1} 2^{j}\beta_j(L,L') + \gamma(L,L').
\]
\end{thm}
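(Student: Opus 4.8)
The plan is to pass to a combinatorial model of the pair $(L,L')$, set up an induction on $n$, and in the inductive step remove the two intersection points realizing the longest finite bar by an explicit Hamiltonian isotopy whose Hofer norm is controlled by $2\beta_1(L,L')$. Throughout I abbreviate $\beta_j=\beta_j(L,L')$ and $\gamma=\gamma(L,L')$.

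First I would translate the barcode into geometric data on the cylinder. Label the $2n$ intersection points $x_1,\dots,x_{2n}$ in cyclic order of their $q$-coordinate. Consecutive points cobound lens-shaped regions $R_1,\dots,R_{2n}$ whose signed areas are the successive differences of the action values $\mathcal{A}(x_i)$, alternating in sign; exactness of $L$ and $L'$ forces the total signed area to vanish, so the assignment $i\mapsto\mathcal{A}(x_i)$ behaves exactly like the ordered sequence of critical values of a Morse function on $S^1$ with $n$ local maxima and $n$ local minima. Under this dictionary the two infinite bars of $\mathcal{B}(L,L')$ correspond to the global maximum and minimum of the action, so that $\gamma=\max_i\mathcal{A}(x_i)-\min_i\mathcal{A}(x_i)$, while the $n-1$ finite bars correspond, via the elder rule for sublevel-set persistence, to the pairs consisting of a local minimum and the maximum that merges its component, with $\beta_j$ the associated action gaps. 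This already recovers the count of $n-1$ finite bars and gives a concrete handle on how each $\beta_j$ and $\gamma$ sit inside the action profile.

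With this picture I would argue by induction on $n$. For $n=1$ there are no finite bars, $L$ and $L'$ cobound two lenses of equal area $\gamma$, and an explicit shear/finger Hamiltonian supported near $R_1\cup R_2$ (shears $H=H(p)$ being the natural moves on $\Sigma$) carries $L$ to $L'$ with Hofer norm $\gamma$; together with \eqref{eq:beta_vs_gamma} this gives $d_H(L,L')=\gamma$ and settles the base case. For the inductive step I would locate the local max/min pair realizing the longest bar $\beta_1$ and construct a compactly supported Hamiltonian isotopy $\psi$ that flattens this oscillation, pushing the relevant arc of $L$ across the corresponding lens so that $\tilde L:=\psi(L)$ meets $L'$ transversely in exactly $2(n-1)$ points. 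Arranging the finger to clear the full height of the oscillation and return should yield $\|\psi\|_H\le 2\beta_1$. The induction then closes provided the cancellation is controlled on the barcode side by the two estimates $\beta_j(\tilde L,L')\le 2\beta_{j+1}$ for $1\le j\le n-2$ and $\gamma(\tilde L,L')\le\gamma$: indeed the triangle inequality for $d_H$, the inductive hypothesis applied to $(\tilde L,L')$, and these bounds give $d_H(L,L')\le 2\beta_1+\sum_{j=1}^{n-2}2^{j}\cdot 2\beta_{j+1}+\gamma=\sum_{j=1}^{n-1}2^{j}\beta_j+\gamma$, which is the desired inequality.

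I expect the main obstacle to be the inductive step itself, namely building the cancellation isotopy so that (a) it creates no new intersection points and removes exactly the chosen pair, (b) its Hofer norm is genuinely bounded by $2\beta_1$ rather than by the ambient width of $\Sigma$, and (c) the action values of the surviving intersection points are distorted by at most a factor $2$ while the global action oscillation does not grow. Controlling (c) is the delicate point: the finger sweeps over neighbouring lenses and can a priori enlarge them, so the factor $2$ in the weights is forced by how the cancellation interacts with the remaining oscillations, which is precisely what makes canceling the \emph{longest} bar first the right order. Keeping track of how the areas of the $R_i$ transform under $\psi$, proving the doubling estimate $\beta_j(\tilde L,L')\le 2\beta_{j+1}$ together with the monotonicity $\gamma(\tilde L,L')\le\gamma$, and verifying that the elder-rule pairing of the flattened profile is the expected one, is where the work concentrates.
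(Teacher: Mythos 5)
Your skeleton --- induction on $n$, base case $d_H(L,L')=\gamma(L,L')$, cancelling one finite bar per step by a localized Hamiltonian move, controlling the surviving barcode, and telescoping to produce the weights $2^j$ --- is exactly the paper's, and your closing arithmetic is fine. But the core of your inductive step fails, in two ways. First, you cancel the \emph{longest} bar. The min/max pair realizing the longest finite bar need not be adjacent along the curves, i.e.\ need not cobound an embedded innermost lens: take the cyclic sequence of action values $0,10,1,3,2,9$ along the intersection points; the elder rule pairs the minimum $1$ with the maximum $9$ (bar $[1,9)$, the longest), while the oscillation with values $3,2$ sits between them. No compactly supported finger can remove exactly that pair at cost comparable to $\beta_1$; it would have to sweep across the intervening oscillation, and it cannot remove the chosen pair while ``creating no new intersection points and removing exactly the chosen pair.'' This is precisely why the paper proves the opposite statement (Proposition \ref{lem:main}): it is the \emph{shortest} bar whose endpoints are always connected by a leaf, an innermost embedded lune of area $\beta_{n-1}(L,L')$, which Khanevsky's construction (Proposition \ref{lem:khanevsky}) deletes at Hofer cost $\beta_{n-1}(L,L')+\epsilon$ --- not even a factor of $2$.

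Second, your estimates $\beta_j(\tilde L,L')\le 2\beta_{j+1}$ and $\gamma(\tilde L,L')\le\gamma$ are unsubstantiated and false in general. An isotopy of Hofer norm $\delta$ yields a $\delta/2$-interleaving and hence only \emph{additive} distortion, $\vert \beta_j-\beta_j'\vert\le\delta$ and $\vert\gamma-\gamma'\vert\le\delta$; with your choice $\delta\approx 2\beta_1$ this is useless, since the surviving bars of length $\le\beta_2$ may be distorted by an amount of order $\beta_1\gg\beta_{j+1}$, so no doubling estimate can hold. Monotonicity of $\gamma$ also fails: deleting a region shifts the action values of all remaining points by $\pm\delta/2$ depending on which arc they lie in (cf.\ Lemma \ref{lem:action}), and the two infinite-bar generators can lie on opposite sides, so one only gets $\vert\gamma-\gamma'\vert\le\delta$. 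Your remark that cancelling the longest bar first ``is the right order'' has it exactly backwards: deleting the \emph{shortest} bar costs $\beta_{n-1}+\epsilon$, an amount dominated by every surviving bar, and the resulting additive bounds (\ref{ineq:beta}) and (\ref{ineq:gamma}) are what the paper feeds into the induction; the weights $2^j$ then arise because $\bigl(\sum_{j=1}^{n-2}2^j\bigr)+2=2^{n-1}$ copies of $\beta_{n-1}+\epsilon$ accumulate and are absorbed into the term $2^{n-1}\beta_{n-1}$. As written, your inductive step rests on a cancellation that is geometrically impossible in general and on two barcode estimates that no stability argument can deliver.
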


Using the inequalities (\ref{eq:beta_vs_gamma}) we get the following bound:
\begin{cor}\label{cor:main}
For any $L,L'$ as above
\[
    \gamma(L,L') \leq d_H(L,L') \leq 2^{n}\gamma(L,L').
\]
\end{cor}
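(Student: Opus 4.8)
The plan is to obtain the corollary as a purely formal consequence of \Cref{thm:main} and the inequalities (\ref{eq:beta_vs_gamma}); no new symplectic geometry is needed, only a short combinatorial estimate. I would treat the two displayed inequalities separately. The lower bound $\gamma(L,L')\leq d_H(L,L')$ requires no work at all: it is exactly the right-hand inequality of (\ref{eq:beta_vs_gamma}), which holds for any pair of transversely intersecting exact Lagrangians and hence in particular for $L,L'\in\mathcal{L}(L_0)$.

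For the upper bound I would start from the estimate supplied by \Cref{thm:main} and bound every finite-bar length by the spectral distance. Because the bars are ordered decreasingly, $\beta_j(L,L')\leq\beta_1(L,L')$ for each $j$, and the left-hand inequality of (\ref{eq:beta_vs_gamma}) gives $\beta_1(L,L')\leq\gamma(L,L')$; combining these yields $\beta_j(L,L')\leq\gamma(L,L')$ for all $j$. Substituting into the bound of \Cref{thm:main} and summing the resulting geometric series,
\[
    \sum_{j=1}^{n-1} 2^{j}\beta_j(L,L') \leq \gamma(L,L')\sum_{j=1}^{n-1}2^{j} = \bigl(2^{n}-2\bigr)\gamma(L,L'),
\]
so that
\[
    d_H(L,L') \leq \bigl(2^{n}-2\bigr)\gamma(L,L') + \gamma(L,L') = \bigl(2^{n}-1\bigr)\gamma(L,L') \leq 2^{n}\gamma(L,L'),
\]
which is the asserted inequality, in fact with the slightly sharper constant $2^{n}-1$.

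The one step that genuinely requires care — and the only place where the geometric hypotheses enter — is verifying that the index range $1\leq j\leq n-1$ appearing in \Cref{thm:main} exhausts all the finite bars, i.e. that the number $k$ of finite bars equals $n-1$, so that the substitution $\beta_j\leq\gamma$ for $j=1,\dots,n-1$ really accounts for every term. I would establish this by a rank count. Since $L$ and $L'$ meet transversely in $2n$ points, $\CF(L,L')$ has $2n$ generators over $\mathbb{Z}_2$; and since each element of $\mathcal{L}(L_0)$ is a circle, $\HF(L,L')\cong H_*(S^1;\mathbb{Z}_2)$ has total rank $2$, so the barcode $\mathcal{B}(L,L')$ contains exactly two infinite bars. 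As each infinite bar consumes one generator and each finite bar a pair of generators, the identity $2+2k=2n$ forces $k=n-1$. This confirms that every finite-bar length is counted exactly once in the sum, and beyond this bookkeeping there is no substantive obstacle.
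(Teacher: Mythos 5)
Your proof is correct and follows essentially the same route as the paper, which deduces the corollary directly from Theorem \ref{thm:main} by bounding each $\beta_j(L,L') \leq \beta_1(L,L') \leq \gamma(L,L')$ via (\ref{eq:beta_vs_gamma}) and summing the geometric series; your count that the barcode has exactly $n-1$ finite bars matches the paper's observation in section \ref{sec:HF}, and your sharper constant $2^n-1$ is indeed what the computation yields before rounding up to $2^n$.
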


\subsection{Relation to previous work.}

Theorem \ref{thm:main} was inspired by the following result of Khanevsky.
\begin{thm}[\cite{KhanevskyThesis}\footnote{Khanevsky proved this result for a wider class of surfaces and Lagrangians,
not just the cylinder.}]\label{thm:Khanevsky}
There exist constants $k$ and $c$ such that for any transversely intersecting $L, L' \in \mathcal{L}(L_0)$,
\[
d_H(L,L') \leq k \cdot \#(L\cap L') + c.
\]
\end{thm}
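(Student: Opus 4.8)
The plan is to reduce the two-sided problem to a one-sided one and then argue by induction on the number of intersection points, carrying out a Hamiltonian simplification of uniformly bounded energy at each step.

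First I would eliminate the dependence on $L'$. Since the Hofer norm is conjugation-invariant, $d_H(L,L')=d_H(\psi L,\psi L')$ for every $\psi\in\Ham(\Sigma)$; choosing $\psi$ with $\psi(L')=L_0$ (possible because $L'\in\mathcal L(L_0)$) gives $d_H(L,L')=d_H(\psi L,L_0)$, while $\#(\psi L\cap L_0)=\#(\psi L\cap\psi L')=\#(L\cap L')$ because $\psi$ is a diffeomorphism. So it suffices to produce universal constants $k,c$ with
\[
  d_H(L,L_0)\le k\cdot\#(L\cap L_0)+c\qquad\text{for every }L\in\mathcal L(L_0),
\]
and feeding $\psi L$ into this bound recovers Theorem \ref{thm:Khanevsky} with the same $k,c$.

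I would prove this by induction on the (necessarily even) number $N:=\#(L\cap L_0)$, using as a workhorse an elementary cost estimate: a Hamiltonian isotopy supported in a region $U\subset\Sigma$ and sweeping area at most $A$ has Hofer norm $\le 2A\le 2\,\mathrm{Area}(\Sigma)$. For the base case $N=2$ the curve $L$ meets $L_0$ in two points and thus splits into one arc $a^+$ in the upper and one arc $a^-$ in the lower half-cylinder, bounding bumps of areas $A^+$ and $A^-$; since $L\in\mathcal L(L_0)$ has vanishing signed area $\int_L p\,\mathrm dq=0$ these are equal, so lowering $a^+$ and raising $a^-$ \emph{simultaneously} has zero flux and is realised by a genuine Hamiltonian isotopy, giving $d_H(L,L_0)\le c_0$ for a universal $c_0$ depending only on $\mathrm{Area}(\Sigma)$.

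For the inductive step, suppose $N\ge 4$. Because $L$ and $L_0$ are smoothly isotopic their geometric intersection number is $0$, so by the bigon criterion any two essential transverse curves meeting in more than the minimal number of points cobound a bigon; I pick an innermost one $D$, bounded by an arc $a\subset L$ and a segment $s\subset L_0$, with $\mathrm{Area}(D)\le\mathrm{Area}(\Sigma)$. The move pushes $a$ across $s$ by a Hamiltonian isotopy whose support is a thin neighbourhood of $\overline D$, producing $L_1\in\mathcal L(L_0)$ with $\#(L_1\cap L_0)=N-2$ at Hofer cost $\le k_1$ for a universal $k_1\lesssim\mathrm{Area}(\Sigma)$; innermost-ness guarantees that this neighbourhood meets $L$ only in $a$, so no new intersections with $L_0$ are created. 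Iterating $(N-2)/2$ times and then invoking the base case yields $d_H(L,L_0)\le k_1\cdot\tfrac{N-2}{2}+c_0\le k\cdot N+c$.

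The hard part will be making the inductive move genuinely Hamiltonian. A purely local retraction of the bigon is impossible: a Hamiltonian isotopy has zero flux, so $\int_L p\,\mathrm dq$ is preserved, and one cannot flatten a bump of positive area onto $L_0$ without relocating that area. The key technical point, and the main obstacle, is therefore to remove the two intersection points by a flux-zero isotopy: I would couple the push of $a$ across $s$ with a compensating vertical displacement of area $\mathrm{Area}(D)$ spread over a wide strip of $L$ lying strictly on one side of $L_0$, so that the total flux vanishes while the compensating part contributes arbitrarily little to the Hofer norm and creates no new crossings. Once this per-step construction is shown to exist with energy bounded uniformly in terms of $\mathrm{Area}(\Sigma)$, the linear estimate above follows, and with it Theorem \ref{thm:Khanevsky}.
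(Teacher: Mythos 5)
Your strategy is essentially Khanevsky's own: reduce by conjugation invariance to the pair $(L,L_0)$, then induct on $\#(L\cap L_0)$, removing per step an innermost bigon --- in the paper's language a \emph{leaf}, a connected component of $\Sigma\setminus(L_0\cup L)$ bounded by one arc of each curve --- via a Hamiltonian isotopy of uniformly bounded Hofer norm. Note that the paper does not reprove Theorem \ref{thm:Khanevsky}; it cites it, and the per-step move you postulate is exactly the deletion-of-a-leaf construction recalled as Proposition \ref{lem:khanevsky}. Your reduction, your base case (equality of the two bump areas by exactness), and your identification of the flux obstruction to a purely local retraction of the bigon are all correct.

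There is, however, one genuinely wrong quantitative claim: the compensating transfer of area $\mathrm{Area}(D)$ can \emph{not} be made to ``contribute arbitrarily little to the Hofer norm.'' For a Hamiltonian path $\{H_t\}$, the area transported across an arc of the moving curve with endpoints $x_t,y_t$ equals $\int_0^1\bigl(H_t(x_t)-H_t(y_t)\bigr)\,\mathrm{d}t$, which is bounded by the Hofer length $\int_0^1\bigl(\max H_t-\min H_t\bigr)\mathrm{d}t$; hence relocating area $A$ costs at least $A$, no matter how widely the displacement is spread (raising a strip of horizontal extent $\ell$ by height $\delta$ requires oscillation $\delta\ell=A$, independent of $\ell$). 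This is consistent with Proposition \ref{lem:khanevsky}, where deleting a leaf of area $a(v)$ costs $a(v)+\epsilon$, not $\epsilon$: the push across the bigon and the compensating relocation are realized by a single Hamiltonian supported in a neighbourhood of the leaf together with a corridor to a region at distance two in the tree $T(L)$, with total oscillation $a(v)+\epsilon$. The error is harmless for the statement of the theorem, since $a(v)\leq \mathrm{Area}(\Sigma)$ uniformly, so your per-step cost merely degrades to a worse universal constant and the linear bound $d_H(L,L_0)\leq k\cdot\#(L\cap L_0)+c$ survives; but as written, your ``key technical point'' asserts an impossible estimate, and the actual construction it is meant to replace --- which you explicitly defer (``once this per-step construction is shown to exist'') --- is precisely the substance of Khanevsky's proof. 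With Proposition \ref{lem:khanevsky} substituted for that step, your induction closes.
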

%\begin{rem}
%    Khanevsky proved this result for a wider class of surfaces and Lagrangians.
%    He also improved the linear bound to a logarithmic bound for equators in the sphere.
%\end{rem}
\noindent
Moreover, Khanevsky proved that Hofer's distance on $\mathcal{L}(L_0)$ is unbounded
\cite{Khanevsky}. Let $L\in \mathcal{L}(L_0)$ and consider a sequence $\{L_n\} \subset \mathcal{L}(L_0)$ of Lagrangians, transverse to $L$, such that $d_H(L,L_n) \xrightarrow{n \to \infty} \infty$. It follows from Khanevsky's result that $\#(L \cap L_n) \xrightarrow{n \to \infty} \infty$.
In contrast to Hofer's metric $d_H$, the spectral distance $\gamma$ is bounded \cite{Shelukhin}.
Therefore, Corollary \ref{cor:main} shows that the number of bars in $\mathcal{B}(L,L_n)$
tends to $\infty$. This recovers $\#(L \cap L_n) \xrightarrow{n \to \infty} \infty$
because the intersection points are in bijection with the endpoints of the bars in $\mathcal{B}(L,L_n)$.

It is known that $\gamma$ is $C^0$-continuous \cite{BHS}, while $d_H$ is not.
Our result gives the following insight into convergence in Hofer's metric.
\begin{cor}\label{cor:C_0}
    Suppose $\{L_n\}_{n\in \N}$ is a sequence of Lagrangians, transverse to $L$, that $C^0$-converges to $L$. If the sequence $k_n:= \#( L \cap L_n )$ is bounded, then
    $\{L_n\}$ converges to $L$ in the Lagrangian Hofer metric.
\end{cor}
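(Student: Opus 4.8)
The plan is to deduce this as a quick consequence of Corollary~\ref{cor:main}, exploiting that the multiplicative constant appearing there is governed solely by the number of intersection points, together with the $C^0$-continuity of the spectral metric $\gamma$ proved in \cite{BHS}.

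First I would set up the counting. Since $L$ and the $L_n$ all lie in $\mathcal{L}(L_0)$ and intersect transversely, the intersection number $k_n=\#(L\cap L_n)$ is even; write $k_n=2m_n$. By hypothesis the $k_n$ are bounded, say $k_n\le K$ for all $n$, and hence $m_n\le K/2$ for every $n$. This boundedness is the only place where the hypothesis on $\#(L\cap L_n)$ enters.

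Next I would apply Corollary~\ref{cor:main} to each pair $(L,L_n)$, obtaining
\[
    d_H(L,L_n)\le 2^{m_n}\,\gamma(L,L_n)\le 2^{K/2}\,\gamma(L,L_n).
\]
The crucial feature is that the exponential factor $2^{m_n}$ is now bounded by the single constant $2^{K/2}$, uniformly in $n$. Finally, since $\gamma$ is $C^0$-continuous \cite{BHS} and $L_n\to L$ in the $C^0$-topology, we have $\gamma(L,L_n)\to\gamma(L,L)=0$; combined with the uniform bound this yields $d_H(L,L_n)\to 0$, which is exactly convergence of $\{L_n\}$ to $L$ in the Lagrangian Hofer metric.

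The step I expect to carry the real content is the uniform control of the exponential factor: it is precisely the boundedness of $k_n$ that prevents $2^{m_n}$ from outpacing the decay $\gamma(L,L_n)\to 0$. Were the intersection numbers allowed to grow, the estimate from Corollary~\ref{cor:main} would degenerate, consistent with the fact---recalled just before the statement---that $d_H$ itself is not $C^0$-continuous. I would also verify that the standing hypotheses of Corollary~\ref{cor:main} are met, namely that each $L_n$ genuinely lies in $\mathcal{L}(L_0)$ and is transverse to $L$, so that $\gamma(L,L_n)$ and the bound are well defined; both are built into the statement.
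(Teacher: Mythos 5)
Your proof is correct and is exactly the argument the paper intends (the paper leaves it implicit, as an immediate consequence of Corollary~\ref{cor:main} together with the $C^0$-continuity of $\gamma$ from \cite{BHS}): boundedness of $k_n=2m_n$ makes the factor $2^{m_n}$ uniformly bounded, while $\gamma(L,L_n)\to\gamma(L,L)=0$. Nothing is missing.
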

\subsection{Outline of proof}
\begin{comment}
Since both sides of the inequality are invariant under Hamiltonian isotopy, it is enough to prove the Theorem for $L$ and $L_0$. 
\end{comment}
%We explain the strategy to prove Theorem \ref{thm:main} assuming that $L$ and $L'$ intersect transversely. Suppose $L$ and $L'$ intersect in $2n$ points $(n \geq 1)$.
We explain the strategy to prove Theorem \ref{thm:main}.
Let $L,L'\in \mathcal{L}(L_0)$ be two transverse Lagrangians that intersect in
$2n$ points.
The action spectrum $\{\mathcal{A}(q) \vert q \in L \cap L'\}$ coincides
with the endpoints of the bars in $\mathcal{B}(L,L')$. Therefore, the barcode $\mathcal{B}(L,L')$ consists of $n-1$ finite bars and two infinite bars.
We prove the Theorem by induction on the number of intersection points.

\textit{Base case:} If there are only two intersection points, say $q$ and $p$, then Hofer's distance is equal to the area of one of the Floer strips connecting $q$ and $p$. This is also the difference between the action values of the two infinite bars in $\mathcal{B}(L,L')$, hence $d_H(L,L') = \gamma(L,L')$.

\textit{Induction Step:} Suppose that Theorem \ref{thm:main} holds for Lagrangians intersecting transversely in $2(n-1)$ points. Let $L$ and $L'$ be as above intersecting in $2n \geq 4$ points. We assume that any two intersection points $q\neq p\in L\cap L'$ satisfy $\mathcal{A}(q)\neq \mathcal{A}(p)$.
In order to use the induction hypothesis, we construct a Lagrangian $L''$ using Khanevsky's construction for \textit{deleting a leaf} \cite{Khanevsky}. A \textit{leaf} is a connected component of $\Sigma \backslash (L \cup L')$, which is bounded by one connected component of $L'\backslash L$ and one connected component of $L \backslash L'$. 
See Figure \ref{fig:leaf}  for an illustration and section \ref{sec:deletion} for more details. 
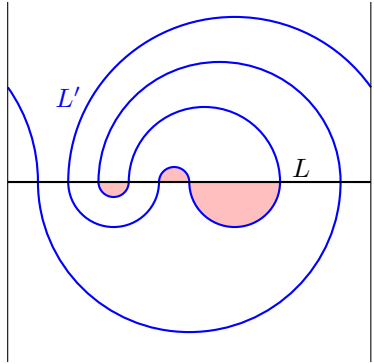
\begin{figure}[hbt] \label{fig:0}

\begin{tikzpicture}[thick,scale=0.398]

%Halbkurve -5 zu 5
%\fill[blue!10] (5,0) -- (360:5cm) arc (360:180:5cm) -- cycle;
\draw[blue] (5,0) -- (360:5cm) arc (360:180:5cm);

%Halbkurve 5 zu -3
%\fill[blue!10] (-5,0) -- (5,0) arc (0:180:4cm)-- cycle;
\draw[blue] (5,0) arc (0:180:4cm);

%Halbkurve -3 zu -2
\fill[red!25] (-3,0) -- (-2,0) arc (0:-180:0.5cm)-- cycle;
\draw[blue] (-2,0) arc (0:-180:0.5cm);

%Halbkurve -2 zu 3
%\fill[white] (-2,0) -- (3,0) arc (0:180:2.5cm)-- cycle;
\draw[blue] (3,0) arc (0:180:2.5cm);

%Halbkurve 3 zu 0
\fill[red!25] (0,0) -- (3,0) arc (0:-180:1.5cm)-- cycle;
\draw[blue] (3,0) arc (0:-180:1.5cm);

%Halbkurve 0 zu -1
\fill[red!25] (0,0) arc (0:180:0.5cm);
\draw[blue] (0,0) arc (0:180:0.5cm);

%Halbkurve -1 zu -4
\draw[blue] (-1,0) arc (0:-180:1.5cm);

%Halbkurve -5 zu -4
\draw[blue] (-5,0) arc (0:35:5.5cm);
\draw[blue] (-4,0) arc (180:35:5.5cm);

\draw (-6,0) -- (6,0);
\draw[thin] (-6,-6) -- (-6,6);
\draw[thin] (6,-6) -- (6,6);

\draw (-4,2) node[above=2pt, blue] {$L'$};
\draw (3.7,0) node[above=-2pt] {$L$};

\end{tikzpicture}
% \subcaption{Chart.}
\captionsetup{format=hang}
\caption{A pair of equators $(L,L')$ in the cylinder (the left and right vertical lines are identified). The three leaves are coloured in red.}
\label{fig:leaf}
\end{figure}

In section 4 we show the following

    \begin{prop}\label{lem:main}
    Let $[a,b)$ be the shortest finite bar in $\mathcal{B}(L,L')$. 
    Let $\bar{q},\bar{p} \in L\cap L'$ be the intersection points satisfying $\mathcal{A}(\bar{q})=b$
    and $\mathcal{A}(\bar{p})=a$. Then $\bar{q}$ and $\bar{p}$ are connected by a leaf.
   \end{prop}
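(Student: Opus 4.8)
The plan is to read the barcode $\mathcal{B}(L,L')$ off the filtration that the action functional $\mathcal{A}$ induces on $\CF(L,L')$ and to match the shortest persistence pair with a single embedded bigon. The starting point is the standard description of the differential on a surface: over $\Z_2$, the coefficient $\langle \partial x, y\rangle$ counts, modulo two, the empty embedded bigons with a convex corner at $x$ and at $y$, and each such bigon is exactly a leaf in the sense of the Proposition; moreover a leaf with corners $x$ (higher action) and $y$ (lower action) has symplectic area $\mathcal{A}(x)-\mathcal{A}(y)>0$. Since $L,L'$ are equators, the generators split by their $\Z_2$-degree into two sets, the degree-one generators $C_1$ and the degree-zero generators $C_0$, with $\partial\colon C_1\to C_0$; the two infinite bars account for $H_0$ and $H_1$, while each of the $n-1$ finite bars pairs a birth in $C_0$ with a death in $C_1$. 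First I would reduce to $L=L_0$ using the invariance of both the barcode and the leaf structure under a common Hamiltonian isotopy, so that (up to an additive constant) $\mathcal{A}$ becomes a primitive of $\lambda|_{L'}$ on the circle $L'$.

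The core is a minimality argument in the style of the elder rule. Consider the sublevel persistence of $\mathcal{A}$: the class that dies at $\bar q$ (action $b$) is the one born at $\bar p$ (action $a$), and for $t$ just below $b$ it is carried by a ``younger'' connected piece $U$ of the sublevel set whose lowest generator is $\bar p$. Suppose, for contradiction, that between its birth and its death this piece had absorbed another component, i.e. that there is a death $\bar q'\in C_1$ with $a<\mathcal{A}(\bar q')=c<b$ merging two sub-pieces of $U$. That merge produces a finite bar $[a'',c)$ with $a''\ge a$, hence of length $c-a''\le c-a<b-a$, contradicting the minimality of $[a,b)$. Therefore no such intervening death exists: the piece $U$ was never enlarged by a merge, so it contains the single degree-zero generator $\bar p$, and $\bar p$ is adjacent to $\bar q$ inside $U$.

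It remains to see that adjacency forces a leaf. Because there is no critical level of $\mathcal{A}$ strictly between $\bar p$ and $\bar q$ inside $U$, no other intersection point lies on the relevant arcs of $L$ and of $L'$, so the component of $\Sigma\setminus(L\cup L')$ bounded by the arc of $L'$ running from $\bar q$ to $\bar p$ together with the arc of $L$ running from $\bar p$ back to $\bar q$ has exactly these two corners. This is precisely a leaf, and it witnesses $\langle\partial\bar q,\bar p\rangle\neq 0$, proving the claim.

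I expect the main obstacle to be the passage from the algebraic pairing to an honest geometric leaf, that is, controlling the shape of the sublevel sets of $\mathcal{A}$ when $L'$ is not a graph over $L$: vertical tangencies of $L'$ create critical points of the primitive that are not intersection points, so the merge picture is not literally that of a Morse function on $S^1$. The cleanest way around this is to first bring $(L,L')$ into a combinatorial normal form in which the leaf/differential dictionary of the first paragraph is manifest, and to check that this normalization alters neither $\mathcal{B}(L,L')$ nor the leaf joining $\bar q$ and $\bar p$; granting such a normal form, the minimality argument above is essentially immediate.
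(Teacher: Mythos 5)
Your proposal has two genuine gaps, and the first is fatal as written. The dictionary you start from --- that $\langle \partial x, y\rangle$ counts \emph{empty embedded} bigons, each of which is already a leaf --- is false in this setting: the combinatorial Floer differential of de Silva--Robbin--Salamon counts \emph{immersed} smooth lunes, which need be neither embedded nor empty (Figure \ref{fig:lune} of the paper shows a lune wrapping around the cylinder whose image contains further intersection points). This is exactly why the proposition has content. The paper must first prove that $n(\bar{q},\bar{p})=1$ at all, which it does via a Jordan basis together with the area-gap Lemma \ref{lem:area_of_lune} (any $x,y$ with $n(x,y)=1$ satisfy $\mathcal{A}(x)-\mathcal{A}(y)\geq b-a$), and must then prove that the resulting lune is a leaf, by contradiction, using Lemma \ref{lem:leaf} (every smooth lune contains a leaf in its image) together with minimality of $[a,b)$ and Lemma \ref{lem:lune} to exclude the two-lune degenerate case. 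Your final step (``adjacency forces a leaf, and it witnesses $\langle\partial\bar{q},\bar{p}\rangle\neq 0$'') silently invokes the false dictionary, so the passage from the persistence pairing to an honest geometric bigon --- the heart of the statement --- is never established. Note also that minimality of the bar does not forbid \emph{other generators} from having action in $(a,b)$ (they may belong to long bars), so ``no intervening merge into $U$'' does not by itself imply that no intersection point lies on the relevant arcs of $L$ and $L'$.

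Second, the reduction you defer to --- a ``combinatorial normal form'' making the sublevel/elder-rule picture literal --- does not exist in the form you need. The obstruction you flag yourself is not removable: a Hamiltonian isotopy preserving transversality and the number of intersection points preserves the combinatorics of $\Sigma\setminus(L\cup L')$, i.e.\ the weighted trees $T(L)$, and for a graphical pair both trees are stars in which every bounded region abuts the unbounded one. Configurations with nested lunes (trees of depth at least two, as in Figure \ref{fig:Trees}) therefore cannot be normalized to graphs without changing $\#(L\cap L')$, so the obstacle you hoped to dispose of by normalization is the entire difficulty. Your elder-rule computation is sound in spirit --- it is precisely the heuristic behind Lemma \ref{lem:area_of_lune}, whose Jordan-basis proof is the rigorous stand-in for ``an intervening merge would create a shorter bar'' --- but without the (nonexistent) normal form you would have to redo that bookkeeping algebraically and then still supply the geometric content of Lemma \ref{lem:leaf}, which is the paper's actual route.
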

    \noindent
    This leaf has area $\mathcal{A}(\bar{q})-\mathcal{A}(\bar{p}) = b-a = \beta_{n-1}(L,L')$.
    
We can therefore remove the intersection points $\bar{q}$ and $\bar{p}$ using Khanevsky's 
    construction for deleting a leaf: For any $\epsilon > 0$ there exists a Hamiltonian diffeomorphism $\phi$ of Hofer norm $\vert \vert \phi \vert \vert_H \leq \beta_{n-1}(L,L')+ \epsilon$ such that $L'':= \phi(L')$ intersects $L$ transversely and the number of intersection points is $2(n-1)$.
    
The Floer complexes $\CF(L,L')$ and $\CF(L,L'')$ can be endowed with action filtrations such that their persistent homologies are $\frac{\beta_{n-1}(L,L')+\epsilon}{2}$-interleaved. It follows that
    \begin{align} \label{ineq:beta}
         \vert \beta_j(L,L') - \beta_j(L,L'') \vert \leq \beta_{n-1}(L,L') + \epsilon
    \end{align}
    for all $1\leq j \leq n-2$ and
    \begin{align}\label{ineq:gamma}
        \vert \gamma(L,L') - \gamma(L,L'') \vert \leq \beta_{n-1}(L,L') + \epsilon.
    \end{align}

The theorem holds true for $(L,L'')$ by the induction hypothesis. 
    We therefore get
    \begin{align*}
    d_H(L,L') &\leq d_H(L,L'') + d_H(L'',L') \\
            &\leq \left( \sum_{j=1}^{n-2} 2^j \beta_j(L,L'') + \gamma(L,L'')\right) + \left(\beta_{n-1}(L,L') + \epsilon \right)\\
            &\leq \sum_{j=1}^{n-2} 2^j \left(\beta_j(L,L') + \beta_{n-1}(L,L') + \epsilon \right) \\
            & \qquad + \left(\gamma(L,L') + \beta_{n-1}(L,L') + \epsilon\right) +\beta_{n-1}(L,L') + \epsilon \\
            &= \sum_{j=1}^{n-2} 2^j \beta_j(L,L') + \gamma(L,L') + \left( \left(\sum_{j=1}^{n-2} 2^j\right) +2\right)(\beta_{n-1}(L,L')+\epsilon)\\
           % &= \sum_{j=1}^{n-2} 2^j \beta_j(L,L') + \gamma(L,L') + 2^{n-1}(\beta_{n-1}(L,L')+\epsilon)\\
            &= \sum_{j=1}^{n-1} 2^j \beta_j(L,L') +\gamma(L,L') + 2^{n-1}\epsilon         ,
    \end{align*}
where we used (\ref{ineq:beta}), (\ref{ineq:gamma}) in the third inequality. Taking the limit as $\epsilon \to 0$ finishes the induction step.

\begin{rem}
    The interleaving between $\HF(L,L')$ and $\HF(L,L'')$, as well as the inequalities (\ref{ineq:beta}) and (\ref{ineq:gamma}) are well-known and hold in wide generality under the name of \textit{stability},
    see for example \cite{KislevShelukhin, usher}.
    Instead of directly applying the general theory to our special case, we include in section \ref{sec:deletion} a combinatorial proof for equators in the cylinder.
\end{rem}
%\subsection{Further questions \textit{(To be removed)}.}
%$\input{questions.tex}
\subsection{Organisation of the paper}
In section \ref{sec:HF} we explain persistent Floer homology for Lagrangians in the cylinder,
using a combinatorial framework for Floer theory.
In section \ref{sec:deletion} we recall the process of deletion of a leaf and analyze its effect on persistent Floer homology and its barcodes. In particular,
we give a combinatorial proof for the inequalities (\ref{ineq:beta}) and (\ref{ineq:gamma}).
Proposition \ref{lem:main} is finally proved in section \ref{sec:smallest_bar}.

\subsection{Acknowledgements}
I'm deeply grateful to my advisor Paul Biran for suggesting this project and many helpful discussions. I would also like to thank Yusuke Kawamoto for his useful remarks on the paper and especially for pointing out Corollary \ref{cor:C_0} to me.
The author was partially supported by the Swiss National Science Foundation (grant number 200021 204107).

%%%%%%%%%%%%%%%%%%%% Section 1 - Persistence Floer Homology %%%%%%%%%%%%%%%%%%
\section{Persistent Floer homology}\label{sec:HF}
In this section we introduce the filtered Floer complex for transversely intersecting Lagrangians in $\mathcal{L}(L_0)$, its persistent homology and the invariants we extract from the barcode. We focus on our object of interest, the cylinder, even though the concepts make sense in much more generality.
We use combinatorial Floer homology for curves in surfaces as developed by
de Silva--Robbin--Salamon \cite{SilvaRobbinSalamon}.
\footnote{The setting in \cite{SilvaRobbinSalamon} does not always cover the case of two isotopic curves. However, the proofs for most of the statements we use in this paper carry over to our setting. Whenever not, we indicate a proof.}
\subsection{Lagrangian Floer complex}
 We follow closely
\cite{SilvaRobbinSalamon}.
Let $L,L' \in \mathcal{L}(L_0)$ such that $L$ intersects $L'$ transversely.
The Floer complex of the pair $(L,L')$ is a chain complex whose underlying $\mathbb{Z}_2$-vector space is generated by the intersection points.
More concretely, denoting $P=L\cap L'$ we define
\[
    \CF(L,L') = \bigoplus_{p\in P} \mathbb{Z}_2 p.
\]
The differential is obtained from counting the number of so-called smooth lunes connecting two intersection points.
We formalize this as follows.
Let 
\[
\mathbb{D} := \{z \in \C \, \vert \, \mathrm{Im}z \geq 0, \vert z \vert \leq 1\}
\]
be the standard half disc. Let $q,p \in P$. A \textit{smooth lune} from $q$ to $p$ is a smooth orientation-preserving immersion $u \colon \mathbb{D} \longrightarrow \Sigma$
satisfying the boundary conditions
\[
  u(\mathbb{D} \cap \R) \subseteq L,
  \qquad u(\mathbb{D}\cap S^1) \subseteq L',
  \qquad u(-1) = q,
  \qquad u(1) = p.
\]
Figure \ref{fig:lune} below shows an example of a smooth lune from $q$ to $p$.
\begin{figure}[hbt] \label{fig:0}

\begin{minipage}[t]{.5\linewidth}
    \begin{center}

\begin{tikzpicture}[thick,scale=0.4]

%%%%%%% Half Disc
\filldraw[fill=blue!10,draw=red!0] (-6,0) -- (6,0) -- (5.90884,1.041889) -- (5.6382,2.0521)-- 
(5.19615,3)-- (4.5963,3.8567) -- (4.24264,4.24264)
-- (3.8567,4.5963) -- (3,5.19615) -- (2.05212,5.63816)
-- (1.041889,5.90885) -- (0,6) --(-1.041889,5.90885) 
--(-2.05212,5.63816) -- (-3,5.19615)-- (-3.8567,4.5963)
-- (-4.24264,4.24264) --(-4.5963,3.8567) --
(-5.19615,3) -- (-5.6382,2.0521) -- (-5.90884,1.041889) -- (-6,0);

\draw[blue, domain=0:6] plot ({\x},{sqrt(36-\x^2)});
\draw[blue, domain=0:6] plot (-{\x},{sqrt(36-\x^2)});
\draw (-6,0) -- (6,0);

\fill[red] (-6,0) circle [radius=4pt];
\fill[green] (6,0) circle [radius=4pt];
\draw[red] (-6,0) node[above=0pt,left=1pt] {$-1$};
\draw[green] (6,0) node[above=0pt, right=1pt] {$1$};

\draw (0,0) node[below=1pt] {$\mathbb{D} \cap \R$};
\draw (0,6) node[above=1pt, blue] {$\mathbb{D} \cap S^1$};

\draw[green] (6,-4) node[above=0pt, right=1pt] {$ $};
\draw[->] (8,2.5) -- node[scale=1,above=0pt]{$u$} (10.5,2.5);

\end{tikzpicture}
\end{center}
  \end{minipage}%
  \begin{minipage}[t] {.5\linewidth}
   \begin{center}
\begin{tikzpicture}[thick,scale=0.4]

\filldraw[fill=blue!10,draw=red!0] 
%erste Halbkurve
(-5,0) -- (-4.924,-0.86824)
-- (-4.6985,-1.7101) -- (-4.3301,-2.5) --(-3.83022, -3.21394) --
(-3.2139,-3.83022) --(-2.5, -4.330127) -- (-1.71010, -4.698463)
-- (0,-5)  -- 
(1.71010, -4.698463) --(2.5, -4.330127) --
(3.2139,-3.83022) --(3.83022, -3.21394) -- (4.3301,-2.5)
-- (4.6985,-1.7101) -- (4.924,-0.86824)
--(5,0)
%zweite Halbkurve
-- (4.93923,0.69459) -- (4.75877, 1.36808)
--(4.4641,2) -- (3.5711,3.0642) -- (3, 3.4641)
--(2.36808, 3.75877) -- (1.69459, 3.939231)
-- (1,4) --(0.3054, 3.939231) -- (-0.3054, 3.939231) -- (-1, 3.4641) -- (-1.57115, 3.064178) --(-2.0641,2.57115) -- (-2.4641,2) -- (-2.75877, 1.36808)
--(-2.9392, 0.69459)
-- (-3,0)
%dritte Halbkurve
(-2,0) --(-1.962, 0.43412) -- (-1.849232, 0.85505)
-- (-1.6651, 1.25) -- (-1.41511, 1.606969)
--(-1.106979,1.91511)--(-0.75, 2.1650635)--
(-0.35505, 2.349232)
-- (0.5,2.5) --(0.9341, 2.46202)
-- (1.355, 2.34923) -- (1.75, 2.165064)
--(2.106969,1.915111) -- (2.41511,1.606969)
--(2.665, 1.25) -- (2.8492316, 0.85505036)
--(2.9620194,0.43412044)
-- (3,0) 
%vierte Halbkurve
-- (2.97721162,-0.26047227) -- (2.9095389,-0.51303)
-- (2.79903811, -0.75) -- (2.649068, -0.964181)
-- (2.464181, -1.149067) -- (2.25, -1.299038)
--(2.01303, -1.409539) -- (1.76047, -1.477211)
-- (1.5,-1.5)
-- (1.2395,-1.4772) -- (0.98697,-1.4095389)
-- (0.75,-1.299038) -- (0.53581859,-1.149067)
-- (0.350933,-0.964181) -- (0.200962,-0.75)
-- (0.090461,-0.51303021) -- (0.0227884,-0.260473)
-- (0,0) -- (-5,0);
%doppelte Fläche
\filldraw[fill=blue!25,draw=red!0] 
(-3,0) -- (-2,0) -- 
(-2.030154,-0.171) -- (-2.0669873,-0.25)
-- (-2.1169778,-0.3213938)
--(-2.1786062,-0.38302222) -- (-2.25,-0.4330127)
--(-2.32899,-0.46984) -- (-2.41318,-0.4924)
-- (-2.5,-0.5)
--(-2.58682,-0.4924039) -- (-2.671, -0.46985)
-- (-2.75,-0.4330127) -- (-2.82139,-0.383022)
-- (-2.883022,-0.32139) -- (-2.933,-0.25)
--(-2.9698,-0.171) -- (-2.9924,-0.0868)
--(-3,0);

\draw (-6,0) -- (6,0);
\draw[thin] (-6,-6) -- (-6,6);
\draw[thin] (6,-6) -- (6,6);

\draw[blue, domain=0:5] plot ({\x},{-sqrt(25-\x^2)});
\draw[blue, domain=0:5] plot (-{\x},{-sqrt(25-\x^2)});
\draw[blue, domain=1:5] plot ({\x},{sqrt(16-(\x-1)^2)});
\draw[blue, domain=1:5] plot ({2-\x},{sqrt(16-(\x-1)^2)});
\draw[blue, domain=0:0.5] plot ({\x - 2.5},{-sqrt(0.5^2- \x^2)});
\draw[blue, domain=0:0.5] plot ({-\x-2.5},{-sqrt(0.5^2-\x^2)});
\draw[blue, domain=0:2.5] plot ({\x + 0.5},{sqrt(2.5^2 - \x^2)});
\draw[blue, domain=0:2.5] plot ({-\x + 0.5},{sqrt(2.5^2 -\x^2)});
\draw[blue, domain=0:1.5] plot ({\x + 1.5},{-sqrt(1.5^2 - \x^2)});
\draw[blue, domain=0:1.5] plot ({-\x + 1.5},{-sqrt(1.5^2 -\x^2)});
\draw[blue, domain=0:0.5] plot ({\x - 0.5},{sqrt(0.5^2 - \x^2)});
\draw[blue, domain=0:0.5] plot ({-\x - 0.5},{sqrt(0.5^2 -\x^2)});
\draw[blue, domain=0:1.5] plot ({\x - 2.5},{-sqrt(1.5^2 - \x^2)});
\draw[blue, domain=0:1.5] plot ({-\x - 2.5},{-sqrt(1.5^2 -\x^2)});
\draw[blue, domain=0:4.5] plot ({\x + 1.5},{sqrt(5.5^2 - \x^2)});
\draw[blue, domain=0:5.5] plot ({-\x + 1.5},{sqrt(5.5^2 -\x^2)});
\draw[blue, domain=4.5:5.5] plot ({\x - 10.5},{sqrt(5.5^2 - \x^2)});

\fill[green] (-5,0) circle [radius=4pt];
\draw[green] (-5,0) node[below=5pt,left=1pt] {$p$};
\fill[red] (0,0) circle [radius=4pt];
\draw[red] (0,0) node[below=5pt,right=1pt] {$q$};

\draw (-4,2) node[above=2pt, blue] {$L'$};
\draw (1.5,0) node[above=0pt] {$L$};

\end{tikzpicture}
\end{center}
% \subcaption{Chart.}
  \end{minipage}
  \caption{A smooth lune from $q$ to $p$.}
  \label{fig:lune}
\end{figure}
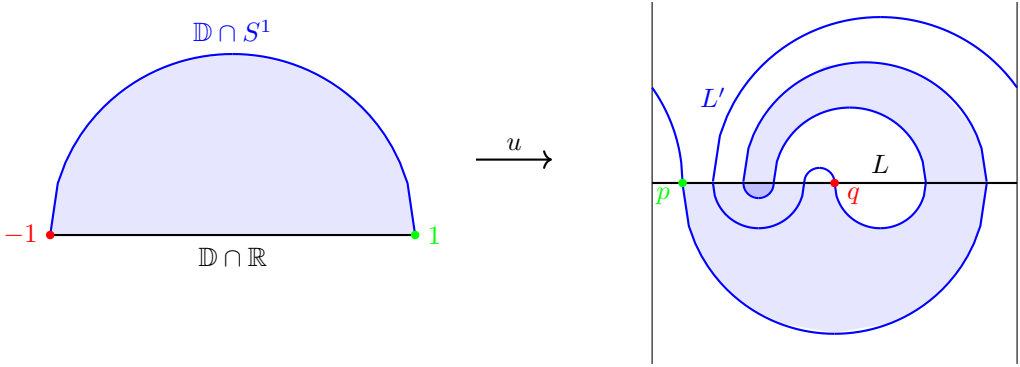

Two smooth lunes $u,u' \colon \mathbb{D} \longrightarrow \Sigma$ are called equivalent
if there exists an orientation-preserving diffeomorphism $\varphi \colon \mathbb{D} \longrightarrow \mathbb{D}$
such that $\varphi(1)=1$,
$\varphi(-1) = -1$
 and $u' = u \circ \varphi$.
We define $n(q,p) \in \mathbb{Z}_2$ to be the number mod $2$ of equivalence classes of smooth lunes from $q$ to $p$. The differential is then defined by
\[
   \partial (q) = \sum_{p\in P} n(q,p)p.
\]
It satisfies $\partial^2 =0$. 
\footnote{The proof in \cite{SilvaRobbinSalamon} carries over to this setting. It is based on studying \textit{broken hearts}, which are immersed discs with one non-convex corner. A quick proof can be found in \cite[Lemma 2.11]{Abouzaid}.} 
Therefore, $\left(\CF(L,L'), \partial \right)$ is a chain complex.

The following property of smooth lunes will be useful.
\begin{lem}\label{lem:lune}
    Let $u \colon \mathbb{D} \to \Sigma$ be a smooth lune from $q$ to $p$.
    Let $q=x_0, x_1, \dots, x_{l}, x_{l+1} = p$ be the points in $P \cap u(\mathbb{D}\cap S^1)$ ordered by their ordering on $L'$ when 
    following $L'$ from $q$ towards $p$ along $u(\mathbb{D} \cap S^1)$
    (see Figure \ref{fig:no_lune}). Assume $x_1, x_l \notin \{q,p\}$. Then $x_1$ and $x_{l}$ are not contained in 
    $u(\mathbb{D} \cap \R)$.
\end{lem}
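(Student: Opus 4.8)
The plan is to argue by contradiction, proving the statement for $x_1$; the assertion for $x_l$ then follows verbatim by running the same argument from the corner $p=u(1)$, reading $u(\mathbb{D}\cap S^1)$ from $p$ towards $q$, under which $x_l$ becomes the first intersection point and the corner of the lune at $p$ is convex just as at $q$. So suppose $x_1\in u(\mathbb{D}\cap\R)$. Since $x_1\neq q,p$ there is an interior point $s_0$ of the diameter $\mathbb{D}\cap\R$ with $u(s_0)=x_1$, while by definition $u(\theta_1)=x_1$ for the point $\theta_1\in\mathbb{D}\cap S^1$ realising the first crossing; thus $x_1$ has a preimage on the $L'$-boundary and a preimage on the $L$-boundary, i.e.\ the boundary curve $u(\partial\mathbb{D})$ self-intersects at $x_1$.

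The backbone of the argument is the preimage picture in $\mathbb{D}$. Since $L$ and $L'$ are embedded and $u$ is an immersion, $u^{-1}(L)$ and $u^{-1}(L')$ are disjoint, properly embedded $1$-manifolds in $\mathbb{D}$. The diameter is one entire component of $u^{-1}(L)$ and the semicircle one entire component of $u^{-1}(L')$. As the two $1$-manifolds are disjoint, every other component of $u^{-1}(L)$ is an embedded arc whose endpoints lie on the open semicircle, necessarily at the points $\theta_1,\dots,\theta_l$ (where $u(\theta_i)=x_i$); being disjoint arcs in the half-disc above the diameter, they realise a non-crossing matching of the $\theta_i$. Dually, every component of $u^{-1}(L')$ other than the semicircle is an embedded arc with both endpoints on the open diameter, and the hypothesis $x_1\in u(\mathbb{D}\cap\R)$ says precisely that $s_0$ is an endpoint of such an arc $\eta$.

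First I would feed in the \emph{firstness} of $x_1$. By the ordering, the open subarc $\alpha\subset\mathbb{D}\cap S^1$ from $-1$ to $\theta_1$ contains no point of $u^{-1}(L)$ other than its endpoints, so $u(\alpha)$ is an arc of $L'$ from $q$ to $x_1$ whose interior misses $L$. Consequently the face $F_0$ of $\mathbb{D}\setminus\big(u^{-1}(L)\cup u^{-1}(L')\big)$ that contains the corner $-1$ has $\alpha$ as a full boundary edge, and $u$ maps $F_0$ to the component $\mathcal R$ of $\Sigma\setminus(L\cup L')$ sitting in the convex quadrant of the lune at $q$. The plan is now to trace the arc $\eta$ out of $s_0$ and the matching arc $\delta_1$ of $u^{-1}(L)$ out of $\theta_1$, and to combine three constraints — the disjointness and planarity of the two preimage $1$-manifolds, the convexity of the corner at $-1$, and the orientation-preservation of $u$ — to conclude that $\eta$ cannot be fitted in without either crossing $\alpha$ or forcing a point of $u^{-1}(L)$ into the interior of $\alpha$. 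Either outcome produces an intersection point of $L$ and $L'$ lying on $L'$ strictly between $q$ and $x_1$, contradicting that $x_1$ is the first such point.

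The hard part will be exactly this last step: turning the qualitative statement that the $L$-boundary sheet over $x_1$ cannot coexist with the crossing-free initial arc $\alpha$ into a rigorous exclusion. Concretely, one must rule out \emph{both} possible sides of $L$ along $\eta$ near $s_0$ and both cyclic positions of $x_1$ relative to the corner, and check in each case that orientation-preservation of $u$ at the two sheets over $x_1$ is incompatible with the convex corner at $q$ unless $\alpha$ acquires an interior crossing. I expect this orientation-and-planarity bookkeeping at the boundary self-intersection over $x_1$ — rather than any global input — to be the crux; Figure~\ref{fig:no_lune} presumably records precisely the forbidden local configuration that the analysis excludes.
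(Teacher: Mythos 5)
Your proposal is not a proof but a proof plan, and it rests on a false structural claim. You assert that ``$u^{-1}(L)$ and $u^{-1}(L')$ are disjoint, properly embedded $1$-manifolds in $\mathbb{D}$.'' Each of $u^{-1}(L)$ and $u^{-1}(L')$ is indeed an embedded $1$-manifold (so arcs \emph{within} one family are pairwise disjoint, and no extra branch of $u^{-1}(L)$ can touch the diameter), but the two families are not disjoint from each other: they cross transversally at every point of $u^{-1}(P)$, and a lune, being merely immersed, routinely covers points of $P$ with its interior --- see the lune of Figure~\ref{fig:lune}, or the left half of Figure~\ref{fig:no_lune}, where $x_2,x_3$ lie on $u(\mathbb{D}\cap\R)$ and the image has multiplicity-two regions. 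This disjointness is exactly what your ``non-crossing matching'' picture and the ``planarity'' pillar of your three-constraint plan depend on: once an arc $\delta_1\subset u^{-1}(L)$ may cross the arc $\eta\subset u^{-1}(L')$ at preimages of $P$, the separation arguments you intend to run no longer constrain how $\eta$ sits relative to $\delta_1$ and $F_0$. Note also that the first horn of your intended dichotomy is vacuous as stated: $\eta$ is a component of the embedded $1$-manifold $u^{-1}(L')$ and the semicircle is another, so $\eta$ can never cross $\alpha$; the contradiction would have to come entirely from the second horn, and you give no derivation of it. Indeed you concede this yourself (``the hard part will be exactly this last step''), so the crux --- the orientation-and-planarity bookkeeping at the self-intersection over $x_1$ --- is announced but never carried out. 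The correct preliminary steps (symmetry reducing $x_l$ to $x_1$; $s_0$ interior to the diameter; $\alpha$ free of $u^{-1}(L)$ by firstness; $u(F_0)$ landing in one component of $\Sigma\setminus(L\cup L')$) do not by themselves exclude the configuration.

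For comparison, the paper's proof is a one-line \emph{local} observation: if $x_1\in u(\mathbb{D}\cap\R)$, then $u$ fails to be an immersion at the preimage(s) of $x_1$ --- the configuration forced by $x_1$ being the \emph{first} crossing would impose a corner on the boundary of the lune at a non-corner point of $\partial\mathbb{D}$, which is precisely what the violet region on the right of Figure~\ref{fig:no_lune} illustrates (and what the boundary characterization of lunes in \cite{SilvaRobbinSalamon} makes precise). So where the paper localizes the contradiction at $u^{-1}(x_1)$ itself, you attempt a global preimage-arc-tracing argument; that route is not hopeless in principle, but to make it work you would have to drop the disjointness assumption, keep track of the crossings of the two families at $u^{-1}(P)$, and then actually supply the case analysis you deferred --- none of which is present in the proposal.
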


\begin{proof}
    If not, $u$ is not an immersion at $u^{-1}(x_1)$ and $u^{-1}(x_l)$.
\end{proof}

\begin{figure}[hbt]\label{fig:0}

\begin{minipage}[t]{.5\linewidth}
    \begin{center}

\begin{tikzpicture}[thick,scale=0.48]

%Halbkurve -5 zu 5
\fill[blue!10] (5,0) -- (360:5cm) arc (360:180:5cm) -- cycle;
\draw[blue] (5,0) -- (360:5cm) arc (360:180:5cm);

%Halbkurve 5 zu -3
\fill[blue!10] (-5,0) -- (5,0) arc (0:180:4cm)-- cycle;
\draw[blue] (5,0) arc (0:180:4cm);

%Halbkurve -3 zu -2
\fill[blue!25] (-3,0) -- (-2,0) arc (0:-180:0.5cm)-- cycle;
\draw[blue] (-2,0) arc (0:-180:0.5cm);

%Halbkurve -2 zu 3
\fill[white] (-2,0) -- (3,0) arc (0:180:2.5cm)-- cycle;
\draw[blue] (3,0) arc (0:180:2.5cm);

%Halbkurve 3 zu 0
\fill[white] (0,0) -- (3,0) arc (0:-180:1.5cm)-- cycle;
\draw[blue] (3,0) arc (0:-180:1.5cm);

%Halbkurve 0 zu -1
\draw[blue] (0,0) arc (0:180:0.5cm);

%Halbkurve -1 zu -4
\draw[blue] (-1,0) arc (0:-180:1.5cm);

%Halbkurve -5 zu -4
\draw[blue] (-5,0) arc (0:35:5.5cm);
\draw[blue] (-4,0) arc (180:35:5.5cm);

\draw (-6,0) -- (6,0);
\draw[thin] (-6,-6) -- (-6,6);
\draw[thin] (6,-6) -- (6,6);

\fill[black] (-5,0) circle [radius=4pt];
\draw[black] (-5,0) node[below=5pt,left=1pt] {$p$};
\fill[black] (0,0) circle [radius=4pt];
\draw[black] (0,0) node[below=5pt,right=1pt] {$q$};

\fill[black] (-3,0) circle [radius=4pt];
\draw[black] (-3,0) node[above=5pt,left=-3pt] {$x_3$};
\fill[black] (-2,0) circle [radius=4pt];
\draw[black] (-2,0) node[above=5pt,right=-3pt] {$x_2$};

\fill[orange] (3,0) circle [radius=4pt];
\draw[orange] (3,0) node[above=5pt,right=-3pt] {$x_1$};
\fill[orange] (5,0) circle [radius=4pt];
\draw[orange] (5,0) node[above=5pt,right=-3pt] {$x_l$};

\draw (-4,2) node[above=2pt, blue] {$L'$};
\draw (1.5,0) node[above=0pt] {$L$};

\end{tikzpicture}
\end{center}
  \end{minipage}%
  \begin{minipage}[t] {.5\linewidth}
   \begin{center}
\begin{tikzpicture}[thick,scale=0.48]

%Halbkurve -5 zu 5
\fill[violet!30] (-4.5,0) -- (-1.5,0) arc (0:180:1.5cm) -- cycle;
\draw[blue] (-1.5,0) arc (0:180:1.5cm);

\fill[violet!30] (1.5,0) -- (4.5,0) arc (0:180:1.5cm) -- cycle;
\draw[blue] (4.5,0) arc (0:180:1.5cm);

\fill[violet!30] (-1.5,0) -- (1.5,0) arc (360:180:1.5cm) -- cycle;
\draw[blue] (1.5,0) arc (360:180:1.5cm);

%\fill[blue!10] (6,0) -- (4.5,0) arc (180:270:1.5cm) -- cycle;
\draw[blue] (4.5,0) arc (180:270:1.5cm);

%\fill[blue!10] (-6,0) -- (-4.5,0) arc (360:270:1.5cm) -- cycle;
\draw[blue] (-4.5,0) arc (360:270:1.5cm);

\fill[orange] (1.5,0) circle [radius=4pt];
\draw[orange] (1.5,0) node[above=6pt,left=-3pt] {$x_l$};

\fill[orange] (-1.5,0) circle [radius=4pt];
\draw[orange] (-1.5,0) node[above=6pt,right=-3pt] {$x_1$};

\fill[black] (-4.5,0) circle [radius=4pt];
\draw[black] (-4.5,0) node[below=6pt,left=1pt] {$q$};
\fill[black] (4.5,0) circle [radius=4pt];
\draw[black] (4.5,0) node[below=6pt,right=1pt] {$p$};

\draw (-6,0) -- (6,0);
\draw[thin] (-6,-6) -- (-6,6);
\draw[thin] (6,-6) -- (6,6);

\draw (-4,1) node[above=1pt, blue] {$L'$};
\draw (-5.5,0) node[above=0pt] {$L$};

\end{tikzpicture}
\end{center}
% \subcaption{Chart.}
  \end{minipage}
  \captionsetup{format=hang}
  \caption{A smooth lune on the left, where $x_1$ and $x_l$ do not lie in 
    $u(\mathbb{D}\cap \R)$. The violet area on the right can't be obtained by a smooth lune.}
    \label{fig:no_lune}
\end{figure}
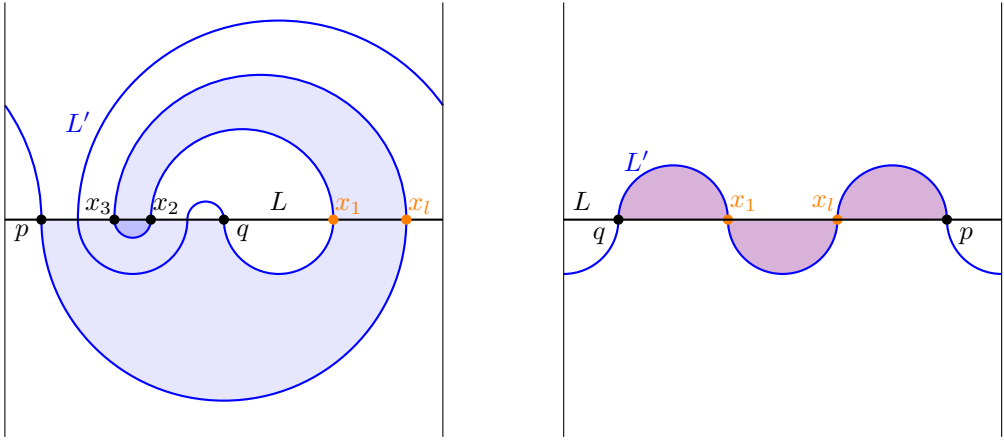

\begin{rem}
      \cite[Theorem 6.7]{SilvaRobbinSalamon} characterizes smooth lunes in terms of their boundary behavior. Here are some of its consequences.
      In a small enough neighbourhood $U$ of $q$ we may choose coordinates $(x,y)$ such that
      $L \cap U$ coincides with the $x$-axis and $L'\cap U$ coincides with
      the $y$-axis. If a smooth lune $u$ leaves $q$,  $u(\mathbb{D})\cap U$ lies entirely either in the first quadrant $(x\geq 0, y\geq 0)$ or in the
      third quadrant $(x \leq 0, y \leq 0)$.
      Similarly, if a lune $u$ enters $p$, then locally $u(\mathbb{D})$ lies
      either in the second quadrant $(x \leq 0, y \geq 0)$ or in the
      forth quadrant $(x \geq 0, y \leq 0)$.
      For any $q, p \in L \cap L'$, there are at most $2$ lunes
      from $q$ to $p$. 
      Figure \ref{fig:two_lunes} shows an example with $2$ lunes.
\end{rem}

\begin{figure}[hbt] \label{fig:0}

\begin{minipage}[t]{.5\linewidth}
    \begin{center}

\begin{tikzpicture}[thick,scale=0.48]

%Halbkurve -5 zu 5
\fill[blue!10] (5,0) -- (360:5cm) arc (360:180:5cm) -- cycle;
\draw[blue] (5,0) -- (360:5cm) arc (360:180:5cm);

%Halbkurve 5 zu -3
\fill[blue!10] (-5,0) -- (5,0) arc (0:180:4cm)-- cycle;
\draw[blue] (5,0) arc (0:180:4cm);

%Halbkurve -3 zu -2
\fill[blue!25] (-3,0) -- (-2,0) arc (0:-180:0.5cm)-- cycle;
\draw[blue] (-2,0) arc (0:-180:0.5cm);

%Halbkurve -2 zu 3
\fill[white] (-2,0) -- (3,0) arc (0:180:2.5cm)-- cycle;
\draw[blue] (3,0) arc (0:180:2.5cm);

%Halbkurve 3 zu 0
\fill[white] (0,0) -- (3,0) arc (0:-180:1.5cm)-- cycle;
\draw[blue] (3,0) arc (0:-180:1.5cm);

%Halbkurve 0 zu -1
\draw[blue] (0,0) arc (0:180:0.5cm);

%Halbkurve -1 zu -4
\draw[blue] (-1,0) arc (0:-180:1.5cm);

%Halbkurve -5 zu -4
\draw[blue] (-5,0) arc (0:35:5.5cm);
\draw[blue] (-4,0) arc (180:35:5.5cm);

\draw (-6,0) -- (6,0);
\draw[thin] (-6,-6) -- (-6,6);
\draw[thin] (6,-6) -- (6,6);

\fill[black] (-5,0) circle [radius=4pt];
\draw[black] (-5,0) node[below=5pt,left=1pt] {$p$};
\fill[black] (0,0) circle [radius=4pt];
\draw[black] (0,0) node[below=5pt,right=1pt] {$q$};

\draw (-4,2) node[above=2pt, blue] {$L'$};
\draw (1.5,0) node[above=0pt] {$L$};

\end{tikzpicture}
\end{center}
  \end{minipage}%
  \begin{minipage}[t] {.5\linewidth}
   \begin{center}
\begin{tikzpicture}[thick,scale=0.48]

%Halbkurve -5 zu -4
\fill[blue!10] (-6,0) -- (-5,0) arc (0:35:5.5cm)-- cycle;
\draw[blue] (-5,0) arc (0:35:5.5cm);
\fill[blue!10] (6,0) -- (-4,0) arc (180:35:5.5cm)-- cycle;
\draw[blue] (-4,0) arc (180:35:5.5cm);

%Halbkurve -1 zu -4
\fill[blue!10] (-4,0) -- (-1,0) arc (0:-180:1.5cm)-- cycle;
\draw[blue] (-1,0) arc (0:-180:1.5cm);

%Halbkurve -5 zu 5
\draw[blue] (5,0) -- (360:5cm) arc (360:180:5cm);

%Halbkurve 5 zu -3
\fill[blue!10] (-5,0) -- (5,0) arc (0:180:4cm)-- cycle;
\draw[blue] (5,0) arc (0:180:4cm);

%Halbkurve -3 zu -2
\fill[blue!10] (-3,0) -- (-2,0) arc (0:-180:0.5cm)-- cycle;
\draw[blue] (-2,0) arc (0:-180:0.5cm);

%Halbkurve -2 zu 3
\draw[blue] (3,0) arc (0:180:2.5cm);

%Halbkurve 3 zu 0
\fill[white] (0,0) -- (3,0) arc (0:-180:1.5cm)-- cycle;
\draw[blue] (3,0) arc (0:-180:1.5cm);

%Halbkurve 0 zu -1
\fill[white] (-1,0) -- (0,0) arc (0:180:0.5cm)-- cycle;
\draw[blue] (0,0) arc (0:180:0.5cm);

\draw (-6,0) -- (6,0);
\draw[thin] (-6,-6) -- (-6,6);
\draw[thin] (6,-6) -- (6,6);

\fill[black] (-5,0) circle [radius=4pt];
\draw[black] (-5,0) node[below=5pt,left=1pt] {$p$};
\fill[black] (0,0) circle [radius=4pt];
\draw[black] (0,0) node[below=5pt,right=1pt] {$q$};

\draw (-4,2) node[above=2pt, blue] {$L'$};
\draw (1.5,0) node[above=0pt] {$L$};

\end{tikzpicture}
\end{center}
% \subcaption{Chart.}
  \end{minipage}
  \caption{Two lunes from $q$ to $p$.}
    \label{fig:two_lunes}
\end{figure}
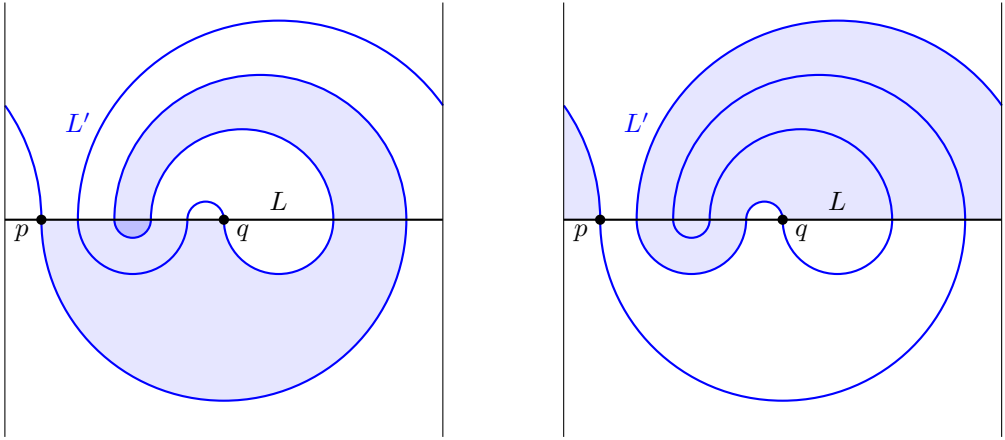
\subsection{Filtration}
Any $L\in \mathcal{L}(L_0)$ is exact, meaning that the $1$-form $\lambda\vert _{L} \in \Omega^1(L)$ is exact. We call a function 
\[
    h_L \colon L \longrightarrow \R
\]
such that $\mathrm{d}h_L = \lambda \vert _{L}$ a \textit{marking} of $L$. The marking $h_L$ is unique up to an additive constant because $L$ is connected.
The filtration on $\CF(L,L')$ we introduce below will depend on a choice
of markings of $L$ and $L'$.

Fix two markings $h_L$ and $h_{L'}$ of $L$ and $L'$ respectively.
Consider the space of paths from $L$ to $L'$, namely
\[
    \Omega_{L,L'} := \{ \gamma \in C^{\infty}([0,1]) \, \vert \, \gamma(0)\in L, \gamma(1) \in L'\}.
\]
We identify tangent vectors $\xi$ at $\gamma \in \Omega_{L,L'}$ with vector fields $\xi(t) \in T_{\gamma(t)}\Sigma$ along $\gamma$.
The action functional $\mathcal{A} \colon \Omega_{L,L'}\longrightarrow \R$ is defined by
\[
    \mathcal{A}(\gamma) = h_L(\gamma(1)) - h_{L'}(\gamma(0)) - \int_0^1 \lambda_{\gamma(t)}(\dot{\gamma}(t)) \, \mathrm{d}t.
\]
The exterior derivative of $\mathcal{A}$ is
\begin{align*}%\label{eq:action}
    \mathrm{d}\mathcal{A}_{\gamma}(\xi) = \int_0^1 \omega (\dot{\gamma}(t),\xi(t)) \, \mathrm{d}t
\end{align*}
for any $\xi \in T_\gamma\Omega_{L,L'}$.
We view $P$ as a subset of $\Omega_{L,L'}$ by viewing $p\in P$ as a constant path. Let $q,p\in P$ and let $u$ be a smooth lune from $q$ to $p$. It follows that
\begin{align}\label{eq:action_area}
    \int _{\mathbb{D}} u^*\omega = \mathcal{A}(q) - \mathcal{A}(p).
\end{align} 

\begin{rem}\label{rem:action}
    Any two neighbouring intersection points are connected by a smooth lune. More precisely, choose an orientation on $L$ and order the intersection 
    points $s_1,s_2, \dots, s_{2n}$ according to their order on $L$. Then for each $1 \leq i \leq 2n$, there exists a smooth lune from $s_i$ to $s_{i+1}$, or a smooth lune from 
   $s_{i+1}$ to $s_i$. (Here, we use cyclic notation for the indices, i.e.
   $s_{2n+1} = s_1$.) We explain these smooth lunes in section \ref{subsec:interleaving}.
   It follows that equation (\ref{eq:action_area}) determines the action functional on $P$ uniquely up to an additive constant.
\end{rem}

We have
$\int_{\mathbb{D}}u^*\omega \geq 0$ because $u$ is an orientation-preserving immersion. It follows that $$\mathcal{A}(q) - \mathcal{A}(p) \geq 0,$$ whenever there is a
lune from $q$ to $p$. In particular, the differential lowers filtration:
\[
    \mathcal{A}(\partial q) < \mathcal{A}(q).
\]
Therefore, for any $\alpha \in \R$, 
\[
    \CF^{\leq \alpha}((L,h_L),(L',h_{L'})) := \bigoplus_{ p \in P, \mathcal{A}(p) \leq \alpha} \mathbb{Z}_2 p
\]
is a subcomplex of $\CF(L,L')$.
Moreover, for each $\alpha \leq \beta$ there are inclusions
$$\CF^{\leq \alpha}((L,h_L),(L',h_{L'})) \subseteq \CF^{\leq \beta}((L,h_L),(L',h_{L'})).$$
The collection
$$\CF^{\leq \bullet }((L,h_L),(L',h_{L'}))
= \left \{ \CF^{\leq \alpha}((L,h_L),(L',h_{L'})) \right\}_{\alpha \in \R}
$$ is called the filtered Floer complex of the pair $\left((L,h_L),(L',h_{L'})\right)$.
\subsection{Persistent Floer homology and its barcode} \label{subsec:barcodes}
%The theory of persistence modules has been developed in topological data analysis \cite{carlsson} and successfully applied in symplectic topology.
For an overview of the theory of persistent homology and its use in symplectic topology see
\cite{PolterovichRosenSamvelyanZhang}. We follow closely chapters 1 and 2 from
\cite{PolterovichRosenSamvelyanZhang}.
\footnote{The definitions here differ slightly from those in \cite{PolterovichRosenSamvelyanZhang} in the convention for semicontinuity.
Since we consider $\CF^{\leq \alpha}$ and not $\CF^{<\alpha}$ we need to work
with intervals of the form $[a,b)$ and not $(a,b]$.}

A \textit{persistence module} over $\mathbb{Z}_2$ consists of an $\R$-indexed
family of $\mathbb{Z}_2$-vector spaces $\{V_t\}_{t\in \R}$ and 
linear maps $f_{s,t}\colon V_s \longrightarrow V_t$ for any $s \leq t$ satisfying
\begin{enumerate}
    \item $f_{t,t} = \mathrm{id}$ for any $t\in \R$,
    \item $f_{s,t}\circ f_{r,s} = f_{r,t}$ for any $r\leq s\leq t$.
\end{enumerate}
Any filtered chain complex gives rise to a 
persistence module by taking homology of the subcomplexes.
In our case, we consider the family 
\[
  \left \{\HF^{\leq \alpha}((L,h_L),(L',h_{L'}))\right\}_{\alpha \in \R} = \left \{\mathrm{H}_*\left(\CF^{\leq \alpha}((L,h_L),(L',h_{L'}))\right) \right \}_{\alpha \in \R}
\]
of $\mathbb{Z}_2$-vector spaces 
together with the maps
\[
    i_{\alpha, \beta}\colon \HF^{\leq \alpha}((L,h_L),(L',h_{L'})) \longrightarrow \HF^{\leq \beta}((L,h_L),(L',h_{L'}))
\]
induced by inclusion for any $\alpha \leq \beta$.
We refer to this persistence module as 
\[
  \HF^{\leq \bullet}((L,h_L),(L',h_{L'})).
\]

It is a persistence module of finite type, in the following sense.
A persistence module $V$ is \textit{of finite type} if 
\begin{enumerate}
    \item For any $t \in \R$, there exists $\epsilon >0$ such that 
    $f_{s,t}$ is an isomorphism for any
    $s\in [t,t+\epsilon)$.
    \item There exist $t_1, \dots, t_m \in \R$ such that for all $t\in \R \backslash \{t_1, \dots, t_m\}$, there exists $\epsilon >0$ such that
    $f_{s,r}$ is an isomorphism for any $s \leq r \in (t-\epsilon, t+\epsilon)$.
    \item There exists $s_-$ such that $V_s=0$ for any $s < s_-$.
\end{enumerate}
Isomorphism classes of persistence modules of finite type can be classified by their barcodes.
A \textit{barcode of finite type} $\mathcal{B} = \{ I_j\}_{j=1}^{n}$ is a finite multiset of intervals $I_j$ of two possible types:
\begin{enumerate}
    \item Finite bars: $I_j =[a_j,b_j)$, where $a_j < b_j$ are real numbers.
    \item Infinite bars: $I_j = [c_j, \infty)$, where $c_j$ is a real number.
\end{enumerate}
A bar $[a,b)$ corresponds to the persistence module $V^{[a,b)}$ defined by
\begin{align*}
    V^{[a,b)}_t = 
    \begin{cases}
           \mathbb{Z}_2 \qquad \text{ if } t\in [a,b),\\
           0 \qquad \quad \text{else},
    \end{cases}
\end{align*}
with internal maps $f_{s, t}\colon V_s^{[a,b)} \longrightarrow V_t^{[a,b)}$ being the identity for $a\leq s \leq t < b$ and $0$ for any other pair $s \leq t$.
\begin{thm}[Structure Theorem,\cite{ZomorodianCarlsson}]
    For a persistence module $V$ of finite type, there is an isomorphism
    of persistence modules
    \[ 
        V \cong \bigoplus_{I \in \mathcal{B}} V^I
    \]
    for a unique barcode $\mathcal{B} = \mathcal{B}(V)$ of finite type. 
\end{thm}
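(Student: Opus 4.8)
The plan is to reduce the continuously indexed module $V$ to a finite diagram of vector spaces and then classify that diagram, following the original argument of Zomorodian--Carlsson. First I would use the finite type hypothesis to discretize. By condition (2) there are finitely many values $t_1 < \dots < t_m$ across which $V$ can change, and by condition (3) the module vanishes below some $s_-$. Choosing sample points $r_0 < t_1$ and $r_i \in [t_i, t_{i+1})$ for $1 \leq i \leq m-1$ together with $r_m \geq t_m$, the internal maps assemble into a finite diagram
\[
    W_0 \xrightarrow{g_1} W_1 \xrightarrow{g_2} \cdots \xrightarrow{g_m} W_m, \qquad W_i := V_{r_i}, \ g_i := f_{r_{i-1}, r_i}.
\]
Conditions (1) and (2) guarantee that on each complementary interval the maps $f_{s,t}$ are isomorphisms, so $V$ is isomorphic to the right-continuous extension of this finite diagram. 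It therefore suffices to classify the diagram up to isomorphism and read off intervals.

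Second, I would recognize the diagram above as a representation of the linearly oriented quiver of type $A_{m+1}$ over the field $\mathbb{Z}_2$, and decompose it into indecomposables. The indecomposable representations of this quiver are exactly the interval representations $\mathbb{I}[i,j]$ placing $\mathbb{Z}_2$ on the nodes $i \leq \bullet \leq j$ with identity maps and $0$ elsewhere. To establish the decomposition I would induct on $m$ via Gaussian elimination: choose a basis of each $W_i$ adapted to $\mathrm{im}(g_i)$ and a complement, and split off the resulting interval summands. Equivalently, one brings the matrices of the $g_i$ simultaneously into a normal form by changes of basis at the nodes. Since all $W_i$ are finite dimensional, the Krull--Schmidt theorem applies and the multiset of indecomposable summands is determined.

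Third, I would translate each summand $\mathbb{I}[i,j]$ back into a bar: a summand supported on nodes $i, \dots, j$ corresponds to a class born at $t_i$ and dying at $t_{j+1}$, hence to the bar $[t_i, t_{j+1})$, while a summand whose support reaches the last node gives an infinite bar $[t_i, \infty)$. Collecting these with multiplicity yields a barcode of finite type $\mathcal{B}$ with $V \cong \bigoplus_{I \in \mathcal{B}} V^I$. Uniqueness of $\mathcal{B}$ then follows because each multiplicity is a numerical invariant of $V$ alone. Writing $\mathrm{rk}(s,t) := \dim \mathrm{im}(f_{s,t})$, a single bar $[a,b)$ contributes $1$ to $\mathrm{rk}(s,t)$ precisely when $a \leq s \leq t < b$, so for $s_1 < s_2 \leq t_1 < t_2$ the number of bars born in $(s_1,s_2]$ and dying in $(t_1,t_2]$ equals
\[
    \mathrm{rk}(s_2,t_1) - \mathrm{rk}(s_1,t_1) - \mathrm{rk}(s_2,t_2) + \mathrm{rk}(s_1,t_2),
\]
an expression independent of the chosen decomposition. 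Since these rank functions recover all multiplicities, the barcode is unique.

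The main obstacle is the decomposition step: proving that a representation of the $A_{m+1}$ quiver splits as a direct sum of interval representations. This is where the genuine content lies, while the discretization and the translation into bars are bookkeeping. The cleanest self-contained route is the explicit basis-adaptation argument sketched above, which simultaneously delivers the existence of the decomposition and, through the rank invariants, its uniqueness; alternatively one may invoke Gabriel's classification of $A_n$ representations directly.
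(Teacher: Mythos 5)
Your proposal is correct and takes essentially the route the paper itself defers to: the paper does not prove the Structure Theorem but cites \cite{ZomorodianCarlsson} and recommends \cite[Chapter 2]{PolterovichRosenSamvelyanZhang}, and your argument --- discretize via the finite type conditions, decompose the resulting $A_{m+1}$-quiver representation into interval summands by basis adaptation (or Gabriel/Krull--Schmidt), then prove uniqueness via the rank invariants --- is exactly that standard proof. The one small point to flag is that your inclusion--exclusion formula only recovers multiplicities of finite bars; for the infinite bars $[a,\infty)$ one additionally uses the limiting ranks $\lim_{t\to\infty}\mathrm{rk}(s,t)$, which is routine.
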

For a proof of this theorem, we recommend \cite[Chapter 2]{PolterovichRosenSamvelyanZhang}.
\footnote{There are more general versions for persistence modules not necessarily of finite type, see \cite{BotnanCrawley}.}

Applying the structure theorem to  $\HF^{\leq \bullet}((L,h_L),(L',h_{L'}))$ we get a barcode which we denote by
$\mathcal{B}((L,h_L),(L',h_{L'}))$.
The endpoints of the bars in $\mathcal{B}((L,h_L),(L',h_{L'}))$ are exactly the action values of the generators of $\CF(((L,h_L),(L',h_{L'}))$. Moreover, the infinite bars correspond to classes in $\HF(L,L') \cong \mathbb{Z}_2 \oplus \mathbb{Z}_2$.
If $L$ intersects $L'$ in $2n$ points, the barcode $\mathcal{B}((L,h_L),(L',h_{L'}))$ therefore consists of $n-1$ finite bars
and two infinite bars. 

In computations of the barcode, it is useful to use appropriate bases
for $\CF(L,L')$ as introduced and studied in \cite{Barannikov} and \cite{UsherZhang}.
A basis $e_1, \dots, e_{2n}$ of 
$\CF((L,h_L),(L',h_{L'}))$
is called \textit{orthogonal} if for all $\lambda_i \in \mathbb{Z}_2$
\[
    \mathcal{A}\left(\sum_{i=1}^{2n} \lambda_i e_i\right) = \max \left \{ \mathcal{A}(e_i) \, \vert \, \lambda_i \neq 0 \right \}.
\]
Following \cite{PolterovichRosenSamvelyanZhang} we call an orthogonal basis $e_1, \dots , e_{n-1}, f_1, \dots, f_{n-1}, g_1, g_2$ satisfying
\begin{align*}
        &\partial e_i = f_i \text{ for } 1\leq i \leq n-1 \\
        &\partial g_1 = \partial g_2 = 0
\end{align*}
a \textit{Jordan basis} for $\partial$.
The existence of such a basis is shown in \cite{Barannikov}.
\footnote{In \cite{UsherZhang} such a basis arises from a \textit{singular value decomposition} of $\partial \colon C \to \ker (\partial)$ for a Floer-type
chain complex $C$.}
The barcode can be read off the action values of an orthogonal
Jordan basis for $\partial$: $\mathcal{B}((L,h_L),(L',h_{L'}))$ consists of the finite bars $[\mathcal{A}(f_i), \mathcal{A}(e_i))$ and the infinite bars $[\mathcal{A}(g_i), \infty)$.

The barcode depends on $h_L$ and $h_{L'}$, but different choices of the markings yield the same barcode up to shift. Therefore, the length of each bar is independent of the markings. 
We denote by
\[
    \beta_1(L,L') \geq \beta_2(L,L') \geq \dots \geq \beta_{n-1}(L,L')
\]
the lengths of the finite bars. Similarly if $[c_1, \infty), [c_2,\infty)$ are the infinite bars, where $c_1\leq c_2$, then
\[
  \gamma(L,L') = c_2 - c_1
\]
is independent of the markings. The quantity $\gamma(L,L')$ is called the spectral
distance between $L$ and $L'$.

\section{Deletion of a leaf} \label{sec:deletion}
Throughout this section, let $L\in \mathcal{L}(L_0)$ be a Lagrangian that intersects $L_0$ transversely. 
For simplicity, we will only study the pair $(L_0,L)$ in the remainder of the paper. This is no restriction, because Hofer's distance and persistent 
Floer homology are invariant under Hamiltonian isotopies.
In this section we study the process of \textit{deletion of a leaf} introduced in \cite{Khanevsky}. 
\footnote{We only consider the zero-section $L_0$ in the cylinder $\Sigma$, while
Khanevsky's constructions work for more general curves and surfaces.}
After recalling this construction, we study its effect on persistent Floer
homology and its barcode.

\subsection{Hamiltonian diffeomorphism.}
Fix an orientation on $L_0$ and denote by $s_1, \dots, s_{2n}$ be the intersection points of $L_0 \cap L$, such that the ordering corresponds to their order on $L_0$.
We use cyclic notation for the indices (e.g. $s_{2n+1}=s_1$).
Write $[s_i, s_{j}]$ for the interval on $L_0$ with left end $s_i$ and right end $s_j$.
Khanevsky associates a graph $T(L)$ to $L$ consisting of two rooted trees, whose vertices carry weights and whose edges are oriented and ordered. Following closely \cite{Khanevsky} we recall this construction.

The graph $T(L)$ consists of 
\begin{itemize}
    \item one vertex for each connected component of $\Sigma \backslash (L_0 \cup L)$,
    \item each vertex $v$ carries a weight $a(v)\in (0, \infty]$ equal to the area of the corresponding region in $\Sigma$,
    \item an edge between $v_1$ and $v_2$, whenever the corresponding regions have a common boundary along a segment of $L_0\backslash L$,
    \item an orientation of the edge from the vertex corresponding to the region in the upper half of $\Sigma$ to the vertex corresponding to the region in the lower half of $\Sigma$,
    \item an ordering of the edges, by assigning the number $j\in \{1, \dots, 2n\}$ to the edge that corresponds to the segment $[s_j, s_{j+1}]$ on $L_0$.
\end{itemize}
In what follows, we often do not distinguish between a vertex and its corresponding component
in $\Sigma \backslash (L_0 \cup L)$. Denote the edge with number $j$ by $e_j$.
Figure \ref{fig:Trees} shows an example of $T(L)$.
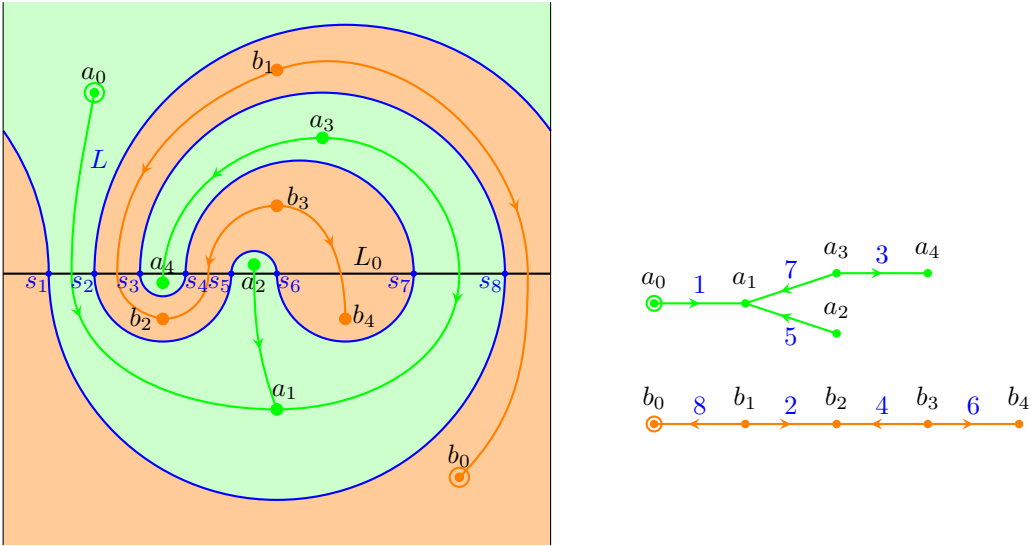
\begin{figure}[hbt] \label{fig:0}

\begin{minipage}[t]{.5\linewidth}
    \begin{center}
\begin{tikzpicture}[thick,scale=0.6]
%oben grün
\fill[green!20] (-6,6) -- (-6,0) -- (6,0) -- (6,6) --cycle;

%unten gelb
\fill[orange!40] (-6,-6) -- (-6,0) -- (6,0) -- (6,-6) -- cycle;

%Halbkurve -5 zu -4
\fill[orange!40] (-6,0) -- (-5,0) arc (0:35:5.5cm);
\fill[orange!40] (6,0) -- (-4,0) arc (180:35:5.5cm);
\draw[blue] (-5,0) arc (0:35:5.5cm);
\draw[blue] (-4,0) arc (180:35:5.5cm);

%Halbkurve -5 zu 5
\fill[green!20] (5,0) -- (360:5cm) arc (360:180:5cm) -- cycle;
\draw[blue] (5,0) -- (360:5cm) arc (360:180:5cm);

%Halbkurve 5 zu -3
\fill[green!20] (-5,0) -- (5,0) arc (0:180:4cm)-- cycle;
\draw[blue] (5,0) arc (0:180:4cm);

%Halbkurve -2 zu 3
\fill[orange!40] (-2,0) -- (3,0) arc (0:180:2.5cm)-- cycle;
\draw[blue] (3,0) arc (0:180:2.5cm);

%Halbkurve 3 zu 0
\fill[orange!40] (0,0) -- (3,0) arc (0:-180:1.5cm)-- cycle;
\draw[blue] (3,0) arc (0:-180:1.5cm);

%Halbkurve 0 zu -1
\fill[green!20] (-1,0) -- (0,0) arc (0:180:0.5cm);
\draw[blue] (0,0) arc (0:180:0.5cm);

%Halbkurve -1 zu -4
\fill[orange!40] (-4,0) -- (-1,0) arc (0:-180:1.5cm);
\draw[blue] (-1,0) arc (0:-180:1.5cm);

%Halbkurve -3 zu -2
\fill[green!20] (-3,0) -- (-2,0) arc (0:-180:0.5cm)-- cycle;
\draw[blue] (-2,0) arc (0:-180:0.5cm);

\draw (-6,0) -- (6,0);
\draw[thin] (-6,-6) -- (-6,6);
\draw[thin] (6,-6) -- (6,6);

\draw (-3.9,2) node[above=2pt, blue] {$L$};
\draw (2,0) node[above=-2pt] {$L_0$};

\fill[green] (-4,4) circle [radius=3pt];
\draw[green] (-4,4) circle [radius=6pt];
\fill[green] (0,-3) circle [radius=4pt];
\fill[green] (1,3) circle [radius=4pt];
\fill[green] (-2.5,-0.2) circle [radius=4pt];
\fill[green] (-0.5,0.2) circle [radius=4pt];

\draw[black] (-4,4) node[above=0pt] {$a_0$};
\draw[black] (0,-3) node[right=3pt,above=0pt] {$a_1$};
\draw[black] (1,3) node[above=-1pt] {$a_3$};
\draw[black] (-2.5,-0.2) node[above=0pt] {$a_4$};
\draw[black] (-0.5,0.2) node[below=1pt] {$a_2$};

\draw[green,middlearrow={stealth}] (-4,4) to [out=260,in=90] (-4.5,0) to [out=270,in=180] (0,-3);
\draw[green,middlearrow={stealth reversed}] (0,-3) to [out=0,in=270] (4,0) to [out=90,in=0] (1,3);
\draw[green,middlearrow={stealth}] (1,3)  to [out=180,in=90] (-2.5,-0.2);
\draw[green,middlearrow={stealth reversed}] (0,-3) to [out=110,in=270] (-0.5,0.2);

\fill[orange] (4,-4.5) circle [radius=3pt];
\draw[orange] (4,-4.5) circle [radius=6pt];
\fill[orange] (0,4.5) circle [radius=4pt];
\fill[orange] (-2.5,-1) circle [radius=4pt];
\fill[orange] (0,1.5) circle [radius=4pt];
\fill[orange] (1.5,-1) circle [radius=4pt];

\draw[orange,middlearrow={stealth reversed}] (4,-4.5) to [out=45,in=270] (5.5,0) to [out=90,in=20] (0,4.5);
\draw[orange,middlearrow={stealth}] (0,4.5) to [out=200,in=90] (-3.5,0) to [out=270,in=170] (-2.5,-1);
\draw[orange,middlearrow={stealth reversed}] (-2.5,-1) to [out=0,in=270] (-1.5,0)  to [out=90,in=180] (-0,1.5);
\draw[orange,middlearrow={stealth}] (0,1.5) to [out=0,in=90] (1.5,-1);

\draw[black] (4,-4.5) node[above=0pt] {$b_0$};
\draw[black] (0,4.5) node[left=5pt,above=-5pt] {$b_1$};
\draw[black] (-2.5,-1) node[left=0pt] {$b_2$};
\draw[black] (0,1.5) node[above=3pt,right=0pt] {$b_3$};
\draw[black] (1.5,-1) node[above=1pt,right=-1pt] {$b_4$};

\fill[blue] (-5,0) circle [radius=2pt];
\draw[blue] (-5,0) node[below=4pt,left=-4pt] {$s_1$};
\fill[blue] (-4,0) circle [radius=2pt];
\draw[blue] (-4,0) node[below=4pt,left=-4pt] {$s_2$};
\fill[blue] (-3,0) circle [radius=2pt];
\draw[blue] (-3,0) node[below=4pt,left=-4pt] {$s_3$};
\fill[blue] (-2,0) circle [radius=2pt];
\draw[blue] (-2,0) node[below=4pt,right=-4pt] {$s_4$};
\fill[blue] (-1,0) circle [radius=2pt];
\draw[blue] (-1,0) node[below=4pt,left=-4pt] {$s_5$};
\fill[blue] (0,0) circle [radius=2pt];
\draw[blue] (0,0) node[below=4pt,right=-3.5pt] {$s_6$};
\fill[blue] (3,0) circle [radius=2pt];
\draw[blue] (3,0) node[below=4pt,left=-3pt] {$s_7$};
\fill[blue] (5,0) circle [radius=2pt];
\draw[blue] (5,0) node[below=4pt,left=-3pt] {$s_8$};
%\draw[blue] (-2.5,-1) node[left=0pt] {$b_2$};
%\draw[blue] (0,1.5) node[above=3pt,right=0pt] {$b_3$};
%\draw[blue] (1.5,-1) node[above=1pt,right=-1pt] {$b_4$};

\end{tikzpicture}
    \end{center}
  \end{minipage}%
  \begin{minipage}[t] {.5\linewidth}
   \begin{center}
\begin{tikzpicture}[thick,scale=0.4]

\draw[green, middlearrow={stealth}] (0,2) -- (3,2);
\draw[green, middlearrow={stealth reversed}] (3,2) -- (6,3);
\draw[green, middlearrow={stealth}] (6,3) -- (9,3);
\draw[green, middlearrow={stealth reversed}] (3,2) -- (6,1);

\fill[green] (0,2) circle [radius=4pt];
\draw[green] (0,2) circle [radius=7pt];
\fill[green] (3,2) circle [radius=4pt];
\fill[green] (6,3) circle [radius=4pt];
\fill[green] (9,3) circle [radius=4pt];
\fill[green] (6,1) circle [radius=4pt];

\draw[black] (0,2) node[above=2pt] {$a_0$};
\draw[black] (3,2) node[above=2pt] {$a_1$};
\draw[black] (6,3) node[above=2pt] {$a_3$};
\draw[black] (9,3) node[above=2pt] {$a_4$};
\draw[black] (6,1) node[above=2pt] {$a_2$};

\draw[blue] (1.5,2) node[above=0pt] {$1$};
\draw[blue] (4.5,2.5) node[above=0pt] {$7$};
\draw[blue] (7.5,3) node[above=0pt] {$3$};
\draw[blue] (4.5,1.5) node[below=0pt] {$5$};

\draw[orange, middlearrow={stealth reversed}] (0,-2) -- (3,-2);
\draw[orange, middlearrow={stealth}] (3,-2) -- (6,-2);
\draw[orange, middlearrow={stealth reversed}] (6,-2) -- (9,-2);
\draw[orange, middlearrow={stealth}] (9,-2) -- (12,-2);

\fill[orange] (0,-2) circle [radius=4pt];
\draw[orange] (0,-2) circle [radius=7pt];
\fill[orange] (3,-2) circle [radius=4pt];
\fill[orange] (6,-2) circle [radius=4pt];
\fill[orange] (9,-2) circle [radius=4pt];
\fill[orange] (12,-2) circle [radius=4pt];

\draw[black] (0,-2) node[above=2pt] {$b_0$};
\draw[black] (3,-2) node[above=2pt] {$b_1$};
\draw[black] (6,-2) node[above=2pt] {$b_2$};
\draw[black] (9,-2) node[above=2pt] {$b_3$};
\draw[black] (12,-2) node[above=2pt] {$b_4$};

\draw[blue] (1.5,-2) node[above=0pt] {$8$};
\draw[blue] (4.5,-2) node[above=0pt] {$2$};
\draw[blue] (7.5,-2) node[above=0pt] {$4$};
\draw[blue] (10.5,-2) node[above=0pt] {$6$};

%phantom to move things up
\draw[yellow!0] (0,-6);

\end{tikzpicture}
% \subcaption{Chart.} 
    \end{center}
  \end{minipage}
    \caption{The graph $T(L)$ associated to a Lagrangian $L$.}
    \label{fig:Trees}
\end{figure}

The graph consists of two connected components corresponding to the two regions of $\Sigma \backslash L$. For each of these connected components, the vertex corresponding to the unbounded region is set to be the root. Thus each connected component is a rooted tree.
A leaf of $T(L)$, if different from the roots, corresponds to a region in
$\Sigma \backslash (L_0 \cup L)$ that is bounded by one connected component of $L_0\backslash L$ and one connected component of $L \backslash L_0$. It has corners at two neighboured intersection points. In the language of combinatorial Floer theory, this region corresponds to a minimal smooth lune $v\colon \mathbb{D} \longrightarrow \Sigma$
for $(L_0,L)$, in the sense that there is no smooth lune 
$u\colon \mathbb{D} \longrightarrow \Sigma$ with $\mathrm{Im}(u)
\subsetneq \mathrm{Im}(v)$.
Denote by $\bar{q}, \bar{p} \in L_0 \cap L$ the corners of the smooth lune $v$. We call $v$ a \textit{leaf} from $\bar{q}$ to $\bar{p}$ and identify it 
with the leaf in the tree.

\begin{rem}
    Lemma \ref{lem:main} states that a shortest bar gives rise to a leaf: If $[\mathcal{A}(\overline{p}),\mathcal{A}(\overline{q}))$
    is a shortest bar in $\mathcal{B}(L_0,L)$ then $\overline{q}$ and $\overline{p}$ are connected by a leaf.
    Conversely, if $\overline{q}$ and $\overline{p}$ are corners of a leaf \textit{with minimal area} among all leaves, then the interval 
    $[\mathcal{A}(\overline{p}),\mathcal{A}(\overline{q}))$
    is a bar in $\mathcal{B}(L_0,L)$.
    However, in general a (non-minimal) leaf does not correspond to a bar.    
\end{rem}

\begin{comment}
Vertices corresponding to regions adjacent to $\partial \Sigma$ are called stationary.
A leaf is called $contractible$ if it is not stationary and the corresponding region in $\Sigma \backslash (L_0 \cup L)$ is homeomorphic to a disc.

Khanevsky proves the following results:
\begin{prop} \label{prop:Kh}
Let $L$ be as above.
\begin{enumerate}
    \item $G(L)$ contains a contractible leaf. 
    \item Suppose $\Sigma \backslash L_0$ is disconnected. Then $G(L)$ has two connected components. Let $T$ be one of them. Let $v$ be a leaf with distance $2$ from a vertex $u$. Then there exists a Hamiltonian deformation of length at most $w(v) + \epsilon$ which removes $v$ from $T$, transfers the weight $w(v)$ to $u$. Some vertices might merge in this process, but the weights of the vertices different from $u$ and $v$ are changed by no more than $\epsilon$.
\end{enumerate}
\end{prop}

The way to do it is in the figure \ref{fig:Graphs}.
\begin{figure}[h]
    \centering
    \includegraphics[scale=0.5]{Pictures/Pic_Graphs.pdf}
    \caption{Graph.}
    \label{fig:Graphs}
\end{figure}

We denote by $e_j$ the edge labeled by $j$.
\end{comment}

The following result is due to Khanevsky:
\begin{prop}[\cite{Khanevsky}]\label{lem:khanevsky}
    Suppose there is leaf $v$ from $\bar{q}$ to $\bar{p}$ of area $a(v)$. 
    Let $w$ be a vertex at distance $2$ from $v$ in $T(L)$.
    Then for any $\epsilon>0$
    there exists a Hamiltonian diffeomorphism $\phi\in \Ham(\Sigma)$ such that $\vert \vert \phi \vert \vert
    _H < a(v) + \epsilon$ and which removes $v$ from the $T(L)$ by moving its weight $a(v)$ from $v$ to the vertex $w$. The support of $\phi$ is contained
    in a neighbourhood of $v$ and a curve as shown on the right in Figure \ref{fig:deletion}.
    In particular, $L_0 \cap \phi(L) = \left( L_0 \cap L \right) \backslash
    \{\bar{q}, \bar{p}\}$ and $\phi(x) = x $ for $x \in (L\cap L_0) \backslash
    \{\bar{q}, \bar{p}\}$.
\end{prop}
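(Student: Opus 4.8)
The plan is to construct the Hamiltonian diffeomorphism $\phi$ explicitly by understanding the geometry of a leaf and its neighbourhood, then realize the weight transfer as an area-preserving deformation supported in a controlled region. The key observation is that a leaf $v$ from $\bar q$ to $\bar p$ is a thin bigon bounded by an arc of $L_0$ and an arc of $L$, meeting at the two neighbouring intersection points $\bar q,\bar p$. Deleting it means pushing the arc of $L$ across the arc of $L_0$ so that the two intersection points cancel, while the area $a(v)$ that was trapped in the bigon has to go somewhere — it gets pushed into an adjacent region, namely the one corresponding to the vertex $w$ at distance $2$ in $T(L)$. The distance-$2$ condition is exactly what guarantees that there is a segment of $L_0\setminus L$ (an edge of the tree) leading from $v$ past one intermediate vertex to $w$, along which the displaced area can be channelled.

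\textbf{First I would} set up local coordinates adapted to the leaf. In a neighbourhood of the closure of the bigon, I can choose symplectic coordinates $(x,y)$ with $\omega=\mathrm{d}x\wedge\mathrm{d}y$ in which $L_0$ is a horizontal segment and $L$ appears as a graph over it bounding a thin region of area $a(v)$; this uses the standard Weinstein-type normal form together with the leaf being a minimal lune (so it is embedded and carries no other intersection points in its interior). \textbf{Next I would} write down a compactly supported Hamiltonian $H$ whose time-one flow $\phi=\phi_1^H$ performs a ``finger move'': it slides the arc of $L$ bounding the bigon across $L_0$, cancelling $\bar q$ and $\bar p$, and routes the swept-out region along the tree-edge toward $w$. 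Because the total swept area equals $a(v)$ and one can make the oscillation of $H$ in $t$ as close as one likes to $a(v)$, the Hofer norm estimate $\|\phi\|_H<a(v)+\epsilon$ follows from the definition of $\|\cdot\|_H$ as the infimum of $\int_0^1(\max H_t-\min H_t)\,\mathrm{d}t$. The support statement is built into the construction: $H$ is supported in a neighbourhood of the bigon together with a thin tube following the curve shown on the right of Figure \ref{fig:deletion}, and the identity $\phi(x)=x$ for $x\in(L\cap L_0)\setminus\{\bar q,\bar p\}$ holds because that support meets $L_0$ only along the relevant edges and avoids all other intersection points.

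\textbf{The hard part will be} verifying the effect on the tree $T(L)$ precisely, i.e.\ that $\phi$ removes exactly the two corners $\bar q,\bar p$ and deposits the weight $a(v)$ onto $w$ while perturbing every other vertex weight by less than $\epsilon$. This is a bookkeeping argument about how the regions of $\Sigma\setminus(L_0\cup L)$ recombine under the finger move: the bigon merges with the intermediate region and then with the region at $w$, and one must check that no spurious new intersection points are created and that areas of untouched regions change only by the small error coming from the tube. I would handle this by tracking the boundary arcs of each region through the isotopy and using the area-preserving property of Hamiltonian flows to account for the total area, localizing all changes to the support of $\phi$. The transversality $L_0\pitchfork\phi(L)$ is then immediate away from the support and can be arranged generically within it.

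Since the explicit construction is due to Khanevsky, I would ultimately cite \cite{Khanevsky} for the detailed verification and present here only the adaptation to the zero-section $L_0\subset\Sigma$, emphasizing the identification of the leaf with a minimal smooth lune so that the combinatorial picture of Section \ref{sec:HF} matches the tree-theoretic picture used in the deletion.
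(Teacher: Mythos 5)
Your proposal is correct and takes essentially the same route as the paper: the paper gives no proof of this proposition at all, deferring entirely to \cite{Khanevsky} (adding only the remark that moving the weight to an arbitrary vertex at distance two, rather than toward the root, requires no change in the construction), and your sketch of the finger move --- sliding the arc of $L$ across $L_0$ to cancel $\bar q,\bar p$, channelling the area $a(v)$ along the tree path to $w$ inside a thin tube, with Hofer energy $a(v)+\epsilon$ --- is an accurate summary of Khanevsky's construction, which you then also cite for the detailed verification. The only point to keep honest is the one you already flag as hard: the transversality and the absence of new intersection points are built into Khanevsky's explicit construction rather than obtained by a separate genericity argument, but since you defer to \cite{Khanevsky} for exactly this bookkeeping, nothing is missing.
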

We refer to this Hamiltonian isotopy by \textit{deletion of a leaf}.
Figure \ref{fig:deletion} shows schematically how the deletion of a leaf looks like.
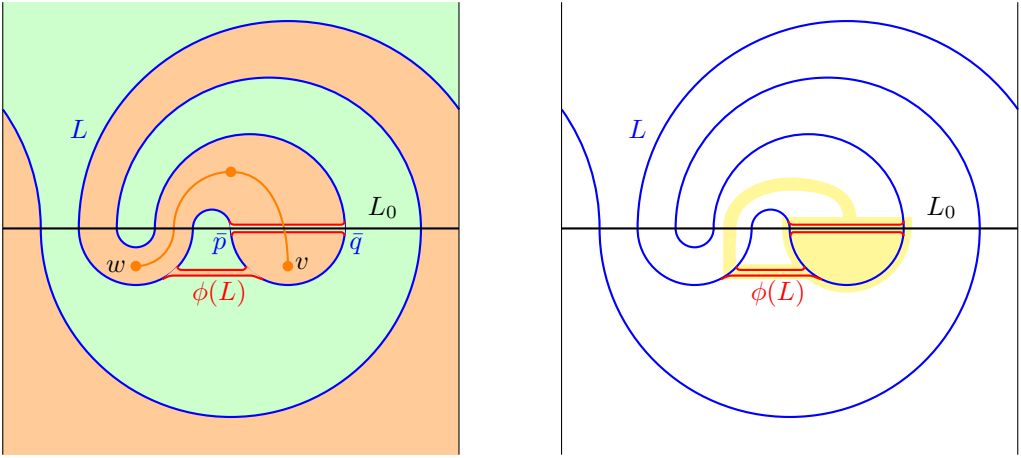
\begin{figure}[hbt] 
\begin{minipage}[t]{.5\linewidth}
   \begin{center}
  \begin{tikzpicture}[thick,scale=0.5]
%oben grün
\fill[green!20] (-6,6) -- (-6,0) -- (6,0) -- (6,6) --cycle;

%unten gelb
\fill[orange!40] (-6,-6) -- (-6,0) -- (6,0) -- (6,-6) -- cycle;

%Halbkurve -5 zu -4
\fill[orange!40] (-6,0) -- (-5,0) arc (0:35:5.5cm);
\fill[orange!40] (6,0) -- (-4,0) arc (180:35:5.5cm);
\draw[blue] (-5,0) arc (0:35:5.5cm);
\draw[blue] (-4,0) arc (180:35:5.5cm);

%Halbkurve -5 zu 5
\fill[green!20] (5,0) -- (360:5cm) arc (360:180:5cm) -- cycle;
\draw[blue] (5,0) -- (360:5cm) arc (360:180:5cm);

%Halbkurve 5 zu -3
\fill[green!20] (-5,0) -- (5,0) arc (0:180:4cm)-- cycle;
\draw[blue] (5,0) arc (0:180:4cm);

%Halbkurve -2 zu 3
\fill[orange!40] (-2,0) -- (3,0) arc (0:180:2.5cm)-- cycle;
\draw[blue] (3,0) arc (0:180:2.5cm);

%Halbkurve 3 zu 0
\fill[orange!40] (0,0) -- (3,0) arc (0:-180:1.5cm)-- cycle;
\draw[blue] (3,0) arc (0:-180:1.5cm);

%Halbkurve 0 zu -1
\fill[green!20] (-1,0) -- (0,0) arc (0:180:0.5cm);
\draw[blue] (0,0) arc (0:180:0.5cm);

%Halbkurve -1 zu -4
\fill[orange!40] (-4,0) -- (-1,0) arc (0:-180:1.5cm);
\draw[blue] (-1,0) arc (0:-180:1.5cm);

%Halbkurve -3 zu -2
\fill[green!20] (-3,0) -- (-2,0) arc (0:-180:0.5cm)-- cycle;
\draw[blue] (-2,0) arc (0:-180:0.5cm);

\draw[thin] (-6,-6) -- (-6,6);
\draw[thin] (6,-6) -- (6,6);

\draw (-4,2) node[above=2pt, blue] {$L$};
\draw (4,0) node[above=0pt] {$L_0$};
\draw (-0.3,-1.1) node[below=0pt, red] {$\phi(L)$};
\draw[blue] (0,0) node[below=6pt,left=-2pt] {$\bar{p}$};
\draw[blue] (3,0) node[below=6pt,right=-2pt] {$\bar{q}$};

\fill[orange!40] (-1.8,-1.35)  to [out=30,in=180] (-1.5,-1.25) to [out=0,in=180] 
(0.5,-1.25) to [out=0,in=170] (0.85,-1.35) -- (0.4,-1)
to [out=-90,in=0] (0.3,-1.1) to [out=0,in=180] (-1.3,-1.1) to [out=180,in=250]
(-1.4,-1) ;

\fill[green!20] (-0.04,0.2) to [out=-90,in=180] (0.1,0.1) to [out=0,in=180] (2.9,0.1) to [out=0,in=-90] (3,0.2) -- (3,-0.2) to [out=100,in=0] (2.9,-0.1)
to [out=180,in=0] (0.1,-0.1) to [out=180,in=80] (0.02,-0.2);

\draw[red] (-0.04,0.2) to [out=-90,in=180] (0.1,0.1) to [out=0,in=180] (2.9,0.1) to [out=0,in=-90] (3,0.2);
\draw[red] (0.02,-0.2) to [out=80,in=180] (0.1,-0.1) to [out=0,in=180] (2.9,-0.1) to [out=0,in=100] (3,-0.2);
\draw[red] (-1.4,-1)   to [out=250,in=180] (-1.3,-1.1) to [out=0,in=180] 
(0.3,-1.1) to [out=0,in=-90] (0.4,-1);
\draw[red] (-1.8,-1.35)   to [out=30,in=180] (-1.5,-1.25) to [out=0,in=180] 
(0.5,-1.25) to [out=0,in=170] (0.85,-1.35);

\fill[orange] (-2.5,-1) circle [radius=4pt];
\fill[orange] (0,1.5) circle [radius=4pt];
\fill[orange] (1.5,-1) circle [radius=4pt];

\draw[orange] (-2.5,-1) to [out=0,in=270] (-1.5,0)  to [out=90,in=180] (-0,1.5);
\draw[orange] (0,1.5) to [out=0,in=90] (1.5,-1);

\draw[black] (-2.5,-1) node[left=0pt] {$w$};
\draw[black] (1.5,-1) node[above=1pt,right=-1pt] {$v$};

\draw (-6,0) -- (6,0);

\end{tikzpicture}
\end{center}
  \end{minipage}%
  \begin{minipage}[t] {.5\linewidth}
  \begin{center}
\begin{tikzpicture}[thick,scale=0.5]

%support
\fill[yellow!50] (-0.2,0) arc (180:360:1.7cm) -- (-0.2,0);
\fill[yellow!50] (-0.2,0) -- (-0.2,0.3) -- (3.2,0.3) -- (3.2,0); 
\fill[yellow!50] (-1.75,-1.35) -- (0.75,-1.35) -- (0.4,-0.95) -- (-1.4,-0.95);

\fill[yellow!50] (-1.4,-0.9)  to [out=100,in=270] (-1.4,0) to [out = 90, in=-180]
(0,1) to [out=0, in=90] (1.4,0.3)
-- (1.75,0.3) to [out=90,in=0] (0,1.35) to [out=180, in=90] (-1.75,0) to [out=-90, in=90] (-1.75,-0.8) to 
[out=270,in = 90] (-1.75,-1.4);
;

%Halbkurve -5 zu -4
\draw[blue] (-5,0) arc (0:35:5.5cm);
\draw[blue] (-4,0) arc (180:35:5.5cm);

\draw[blue] (5,0) -- (360:5cm) arc (360:180:5cm);

\draw[blue] (5,0) arc (0:180:4cm);

\draw[blue] (3,0) arc (0:180:2.5cm);

\draw[blue] (3,0) arc (0:-180:1.5cm);

\draw[blue] (0,0) arc (0:180:0.5cm);

\draw[blue] (-1,0) arc (0:-180:1.5cm);

\draw[blue] (-2,0) arc (0:-180:0.5cm);

\draw[thin] (-6,-6) -- (-6,6);
\draw[thin] (6,-6) -- (6,6);

\draw (-4,2) node[above=2pt, blue] {$L$};
\draw (4,0) node[above=0pt] {$L_0$};
\draw (-0.3,-1.1) node[below=0pt, red] {$\phi(L)$};

\draw[red] (-0.04,0.2) to [out=-90,in=180] (0.1,0.1) to [out=0,in=180] (2.9,0.1) to [out=0,in=-90] (3,0.2);
\draw[red] (0.02,-0.2) to [out=80,in=180] (0.1,-0.1) to [out=0,in=180] (2.9,-0.1) to [out=0,in=100] (3,-0.2);
\draw[red] (-1.4,-1)   to [out=250,in=180] (-1.3,-1.1) to [out=0,in=180] 
(0.3,-1.1) to [out=0,in=-90] (0.4,-1);
\draw[red] (-1.8,-1.35)   to [out=30,in=180] (-1.5,-1.25) to [out=0,in=180] 
(0.5,-1.25) to [out=0,in=170] (0.85,-1.35);

\draw (-6,0) -- (6,0);
\end{tikzpicture}
\end{center}
% \subcaption{Chart.}
  \end{minipage}
  \captionsetup{format=hang}
   \caption{The procedure of deleting a leaf. The yellow region sketches the support of $\phi$.}
    \label{fig:deletion}
\end{figure}

\begin{rem}
    Khanevsky applied this process only to leaves that are at distance $\geq 2$ from the root,
    by moving weight to the vertex that is at distance $2$ and closer to the root.
    This is of no significance to us.
    We allow deletion of a leaf different from the root, by moving its weight to any other leaf at distance two. The construction of $\phi$ works the same.
\end{rem}
\subsection{Effect on the Floer complex}
We denote the set of intersection points of $L$ and $L_0$ by $P$ and the Floer complex by $C:= \CF(L_0,L)$.
Suppose we obtain $L'$ by deleting a leaf from $\bar{q}$ to $\bar{p}$
as explained in Proposition \ref{lem:khanevsky} by $L' = \phi(L)$. Then the set of intersection points
of $L_0$ and $L'$ is $P'=P\backslash \{ \bar{q}, \bar{p}\}$ and we denote the Floer complex of the pair $(L_0,L')$ by $C':=\CF(L_0,L')$.
Following \cite[Appendix C]{SilvaRobbinSalamon} the chain complex $C'$ can be expressed in terms of $C$ as follows.
Recall that the differentials in $C$ and $C'$ are given by the formulas
\[
  \partial (q) = \sum\limits_{p\in P} n(q,p)p, \qquad \partial (q) = \sum_{p'\in P'} n'(q',p')p'
\]
where $n(q,p)\in \mathbb{Z}_2$ denotes the mod $2$ count of smooth lunes
for $(L_0,L)$ from $q$ to $p$ and $n'(q',p')\in \mathbb{Z}_2$ denotes the mod $2$ count of smooth lunes for $(L_0,L')$ from $q'$ to $p'$.
$n'$ can be computed from $n$ by
\[
    n'(q',p')=n(q',p') + n(q',\bar{p})n(\bar{q},p').
\]
Indeed, the only change happens when $n(q',\bar{p})=n(\bar{q},p')=1$.
Figure \ref{fig:effect_differential} below illustrates the situation.
\begin{figure}[hbt] \label{fig:0}

\begin{minipage}[t]{.5\linewidth}
    \begin{center}

\begin{tikzpicture}[thick,scale=0.5]

%%%%%%% Half Disc
\draw (-6,0) -- (6,0);
\draw[blue] (-4.5,0) arc (180:0:1.5cm);
\draw[blue] (-1.5,0) arc (180:360:1.5cm);
\draw[blue] (1.5,0) arc (180:0:1.5cm);
\draw[blue] (4.5,0) arc (180:270:1.5cm);
\draw[blue] (-4.5,0) arc (360:270:1.5cm);

\fill[blue] (-4.5,0) circle [radius=4pt];
\fill[blue] (-1.5,0) circle [radius=4pt];
\fill[blue] (1.5,0) circle [radius=4pt];
\fill[blue] (4.5,0) circle [radius=4pt];

\draw[blue] (-4.5,0) node[below=6pt,right=0pt] {$q'$};
\draw[blue] (-1.5,0) node[below=6pt,left=0pt] {$\bar{p}$};
\draw[blue] (1.5,0) node[below=6pt,right=0pt] {$\bar{q}$};
\draw[blue] (4.5,0) node[below=6pt,left=0pt] {$p'$};

\draw[white] (0,-2.8) node {};

\end{tikzpicture}
        
    \end{center}
  \end{minipage}%
  \begin{minipage}[t] {.5\linewidth}
   \begin{center}
\begin{tikzpicture}[thick,scale=0.5]
%%%%%%% Half Disc
\draw (-6,0) -- (6,0);
\draw[blue] (-4.5,0) arc (180:7:1.5cm);
\draw[blue] (4.5,0) arc (0:173:1.5cm);
\draw[blue] (4.5,0) to [out=270,in=180] (6,-3);
\draw[blue] (-4.5,0) to [out=270,in=0] (-6,-3);
\draw[blue] (-1.52,0.2) to [out=-30,in=180] (-1.4,0.1) to [out=0,in=180] (1.4,0.1) to [out=0,in=210] (1.52,0.2);

\fill[blue] (-4.5,0) circle [radius=4pt];
\fill[blue] (4.5,0) circle [radius=4pt];

\draw[blue] (-4.5,0) node[below=6pt,right=0pt] {$q'$};
\draw[blue] (4.5,0) node[below=6pt,left=0pt] {$p'$};

\end{tikzpicture}
        
    \end{center}
% \subcaption{Chart.}
  \end{minipage}
 \caption{A new lune occurs after deletion of the leaf from $\bar{q}$
    to $\bar{p}$.}
    \label{fig:effect_differential}
\end{figure}
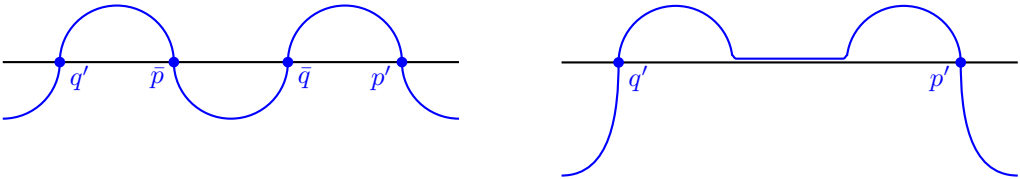

There are chain maps 
\[
\Psi \colon C \longrightarrow C' \text{\quad   and \quad  } \Phi\colon C' \longrightarrow C
\]
given by 
\[ \Psi(q) = 
\begin{cases}
  q \qquad \qquad \qquad \quad  \, \, \, \, q \neq \bar{q}, \bar{p},\\
  0 \qquad \qquad \qquad \quad \, \, \, \, q =\bar{q}, \\
  \sum\limits_{p' \in P'} n(\bar{q},p')p' \qquad q=\bar{p}
\end{cases}
\]
and
\[ \Phi(q') = q' + n(q',\bar{p})\bar{q}.
\]
These chain maps are chain homotopy inverses to each other:
$\Psi \circ \Phi = \mathrm{Id}$ and $\Phi \circ \Psi - \mathrm{Id} = \partial T + T \partial$ for the chain homotopy $T \colon C \longrightarrow C$ given by
\begin{align*}
 T(q) = 
    \begin{cases}
      \overline{q} \qquad q = \overline{p}, \\
       0  \qquad q\neq \overline{p}.  
    \end{cases}
\end{align*}
\subsection{Effect on the filtration}\label{subsec:interleaving}
We first study the change of the graph $T = T(L)$ to the graph $T' = T(L')$.
There are two cases, one for a leaf above $L_0$ (Case 1) and one for a leaf below $L_0$ (Case 2), as shown in Figure \ref{fig:trees_case1}.
In Case 1, let $j\in \{1, \dots, 2n\}$ be the index with $\bar{q}= s_j$ and $\bar{p}= s_{j+1}$.
Similarly in case 2, let $j\in \{1, \dots, 2n\}$ be the index with $\bar{p}= s_j$ and $\bar{q}= s_{j+1}$.
The intersections point of $L_0$ with $L'$ are $s_1, \dots, s_{j-1},s_{j+2}, \dots s_{2n}$.
For the ordering of the edges of $T(L')$ we use the numbering by $1, \dots, j-1, j+2, \dots, 2n$.
Let $k$ be the index of the edge adjacent to $w$ that lies on the path from $w$ to $v$.
By renumbering if needed, we may arrange that $j+2 \leq k \leq 2n$.
Figure \ref{fig:change_of_trees} illustrates the change of the graphs in Case 1.
Case 2 is analogous with reversed arrows.

\begin{figure}[hbt] \label{fig:0}

\begin{minipage}[t]{.5\linewidth}
    \begin{center}

\begin{tikzpicture}[thick,scale=0.5]
%oben grün

\draw (-6,0) -- (6,0);

\draw (-6,4) node[right=0pt] {Case 1};

\draw[blue] (-4,0) to [out=90, in =180] (-3,1.5) to [out=0,in=90] (-2,0);
\draw[blue, dotted] (-4,0) to [out=-90,in=0] (-5,-2) to [out=180,in=-90] (-6,0);
\draw[blue, dotted] (-2,0) to [out=-90,in=180] (-1,-2) to [out=0,in=-90] (0,0);

\fill[blue] (-6,0) circle [radius=4pt];
\draw[blue] (-6,0) node[below=5pt,left=-3pt] {$s_{j-1}$};
\fill[blue] (-4,0) circle [radius=4pt];
\draw[blue] (-4,0) node[below=5pt,left=-3pt] {$s_j$};
\fill[blue] (-2,0) circle [radius=4pt];
\draw[blue] (-2,0) node[below=5pt,right=-3pt] {$s_{j+1}$};
\fill[blue] (0,0) circle [radius=4pt];
\draw[blue] (0,0) node[below=5pt,right=-3pt] {$s_{j+2}$};

\fill[blue] (4,0) circle [radius=4pt];
\draw[blue] (4,0) node[below=5pt,left=-3pt] {$s_{k}$};
\fill[blue] (6,0) circle [radius=4pt];
\draw[blue] (6,0) node[below=5pt,left=-10pt] {$s_{k+1}$};

\fill[green] (-3,0.75) circle [radius=4pt];
\draw[black] (-3,0.75) node[right=0pt] {$v$};
\fill[green] (-1,-3) circle [radius=4pt];
\fill[green] (5,1) circle [radius=4pt];
\draw[black] (5,1) node[right=0pt] {$w$};
\draw[green] (-3,0.75) to [out=-90,in=180] (-1,-3);
\draw[green] (-1,-3) to [out=0,in=225] (4,-2) to [out=45,in=-90] (5,1);

\fill[orange] (-5,-1) circle [radius=4pt];
\fill[orange] (-3,3) circle [radius=4pt];
\fill[orange] (-1,-1) circle [radius=4pt];
\draw[orange] (-5,-1) to [out=90,in=180] (-3,3);
\draw[orange] (-3,3) to [out=0,in=90] (-1,-1);

\end{tikzpicture}
        
    \end{center}
  \end{minipage}%
  \begin{minipage}[t] {.5\linewidth}
   \begin{center}
\begin{tikzpicture}[thick,scale=0.5]

\draw (-6,0) -- (6,0);

\draw (-6,4) node[right=0pt] {Case 2};
\draw (-6,-3.2) node[white] {};

\draw[blue] (-4,0) to [out=-90, in =180] (-3,-1.5) to [out=0,in=-90] (-2,0);
\draw[blue, dotted] (-4,0) to [out=90,in=0] (-5,2) to [out=180,in=90] (-6,0);
\draw[blue, dotted] (-2,0) to [out=90,in=180] (-1,2) to [out=0,in=90] (0,0);

\fill[blue] (-6,0) circle [radius=4pt];
\draw[blue] (-6,0) node[below=5pt,left=-14pt] {$s_{j-1}$};
\fill[blue] (-4,0) circle [radius=4pt];
\draw[blue] (-4,0) node[below=5pt,left=-3pt] {$s_j$};
\fill[blue] (-2,0) circle [radius=4pt];
\draw[blue] (-2,0) node[below=5pt,right=-3pt] {$s_{j+1}$};
\fill[blue] (0,0) circle [radius=4pt];
\draw[blue] (0,0) node[below=5pt,right=-3pt] {$s_{j+2}$};

\fill[blue] (4,0) circle [radius=4pt];
\draw[blue] (4,0) node[below=5pt,left=-3pt] {$s_{k}$};
\fill[blue] (6,0) circle [radius=4pt];
\draw[blue] (6,0) node[below=5pt,right=-3pt] {$s_{k+1}$};

\fill[green] (-3,-0.75) circle [radius=4pt];
\draw[black] (-3,-0.75) node[right=0pt] {$v$};
\fill[green] (-1,3) circle [radius=4pt];
\fill[green] (5,-1) circle [radius=4pt];
\draw[black] (5,-1) node[right=0pt] {$w$};
\draw[green] (-3,-0.75) to [out=90,in=180] (-1,3);
\draw[green] (-1,3) to [out=0,in=135] (4,2) to [out=-45,in=90] (5,-1);

\fill[orange] (-5,1) circle [radius=4pt];
\fill[orange] (-3,-3) circle [radius=4pt];
\fill[orange] (-1,1) circle [radius=4pt];
\draw[orange] (-5,1) to [out=-90,in=180] (-3,-3);
\draw[orange] (-3,-3) to [out=0,in=-90] (-1,1);

\end{tikzpicture}
        
    \end{center}
% \subcaption{Chart.}
  \end{minipage}
  \caption{Two scenarios for the leaf.}
    \label{fig:trees_case1}
\end{figure}
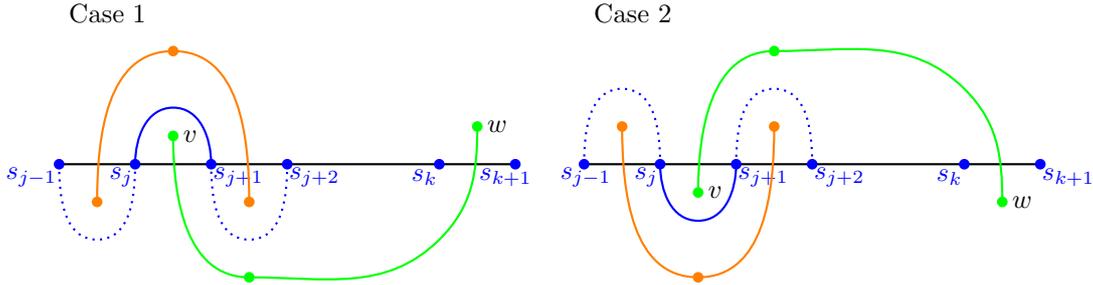
\begin{figure}[hbt] \label{fig:0}

\begin{minipage}[t]{.5\linewidth}
 \begin{center}

\begin{tikzpicture}[thick,scale=0.65]

\draw (-2,-1) node[right=0pt] {$T(L)$};

\draw[green]  (-6,-4) circle [radius=0.7cm];
\draw[green] (-6,-4) node {$S_0$};
\fill[green] (-3,-4) circle [radius=4pt];
\draw[green] (-3,-4) node[above=0pt] {$a_2$};
\fill[green] (0,-4) circle [radius=4pt];
\draw[green] (0,-4) node[above=0pt] {$a_1$};
\fill[green] (3,-4) circle [radius=4pt];
\draw[green] (3,-4) node[above=0pt] {$a(v)$};
\draw[green]  (0,-6) circle [radius=0.7cm];
\draw[green] (0,-6) node {$S_1$};

\draw[green] (-5.3,-4) -- (-3,-4);
\draw[green,middlearrow={stealth}] (-3,-4) -- (0,-4);
\draw[blue] (-1.5,-4) node[above=0pt] {$k$};
\draw[green,middlearrow={stealth reversed}] (0,-4) -- (3,-4);
\draw[blue] (1.5,-4) node [above=0pt] {$j$};
\draw[green] (0,-4) -- (0,-5.3);

\draw[orange]  (-6,-10) circle [radius=0.7cm];
\draw[orange] (-6,-10) node {$T_0$};
\fill[orange] (-3,-10) circle [radius=4pt];
\draw[orange] (-3,-10) node[above=0pt] {$b$};
\fill[orange] (0,-9) circle [radius=4pt];
\draw[orange] (0,-9) node[above=0pt] {$b_1$};
\fill[orange] (0,-11) circle [radius=4pt];
\draw[orange] (0,-11) node[below=0pt] {$b_2$};
\draw[orange]  (3,-9) circle [radius=0.7cm];
\draw[orange] (3,-9) node {$T_1$};
\draw[orange]  (3,-11) circle [radius=0.7cm];
\draw[orange] (3,-11) node {$T_2$};

\draw[orange] (-5.3,-10) -- (-3,-10);
\draw[orange, middlearrow={stealth}] (-3,-10) -- (0,-9);
\draw[blue] (-1.5,-9.5) node [above=0pt] {$j-1$};
\draw[orange,middlearrow={stealth}] (-3,-10) -- (0,-11);
\draw[blue] (-1.5,-10.5) node [below=0pt] {$j+1$};
\draw[orange] (0,-9) -- (2.3,-9);
\draw[orange] (0,-11) -- (2.3,-11);

\end{tikzpicture}
\end{center}
  \end{minipage}%
 \hfill %
  \begin{minipage}[t] {.5\linewidth}
   \begin{center}
\begin{tikzpicture}[thick,scale=0.65]
\draw (-2,-1) node[right=0pt] {$T(L')$};

\draw[green]  (-6,-4) circle [radius=0.7cm];
\draw[green] (-6,-4) node {$S_0$};
\fill[green] (-3,-4) circle [radius=4pt];
\draw[green] (-3,-4) node[above=6pt] {$a_2+ a(v)$};
\draw[green] (-3,-4) node[above=0pt] {$+\epsilon$};
\fill[green] (0,-4) circle [radius=4pt];
\draw[green] (0,-4) node[above=0pt] {$a_1 - \epsilon$};
\draw[green]  (0,-6) circle [radius=0.7cm];
\draw[green] (0,-6) node {$S_1$};

\draw[green] (-5.3,-4) -- (-3,-4);
\draw[green,middlearrow={stealth}] (-3,-4) -- (0,-4);
\draw[blue] (-1.5,-4) node[below=0pt] {$k$};
\draw[green] (0,-4) -- (0,-5.3);

\draw[orange]  (-6,-10) circle [radius=0.7cm];
\draw[orange] (-6,-10) node {$T_0$};
\fill[orange] (-3,-10) circle [radius=4pt];
\draw[orange] (-3,-10) node[above=0pt] {$b-\epsilon$};
\fill[orange] (0,-10) circle [radius=4pt];
\draw[orange] (0,-10) node[above=5pt] {$b_1+b_2$};
\draw[orange] (0,-10) node[above=0pt] {$+\epsilon$};
\draw[orange]  (3,-9) circle [radius=0.7cm];
\draw[orange] (3,-9) node {$T_1$};
\draw[orange]  (3,-11) circle [radius=0.7cm];
\draw[orange] (3,-11) node {$T_2$};

\draw[orange] (-5.3,-10) -- (-3,-10);
\draw[orange, middlearrow={stealth}] (-3,-10) -- (0,-10);
\draw[blue] (-1.5,-10) node [below=0pt] {$j-1$};
\draw[orange] (0,-10) -- (2.3,-9);
\draw[orange] (0,-10) -- (2.3,-11);

\end{tikzpicture}
\end{center}
% \subcaption{Chart.}
  \end{minipage}
  \captionsetup{format=hang}
  \caption{Change of trees. The only changes of weights happen at the vertices,
    where the weights are recorded.}
\label{fig:change_of_trees}
\end{figure}
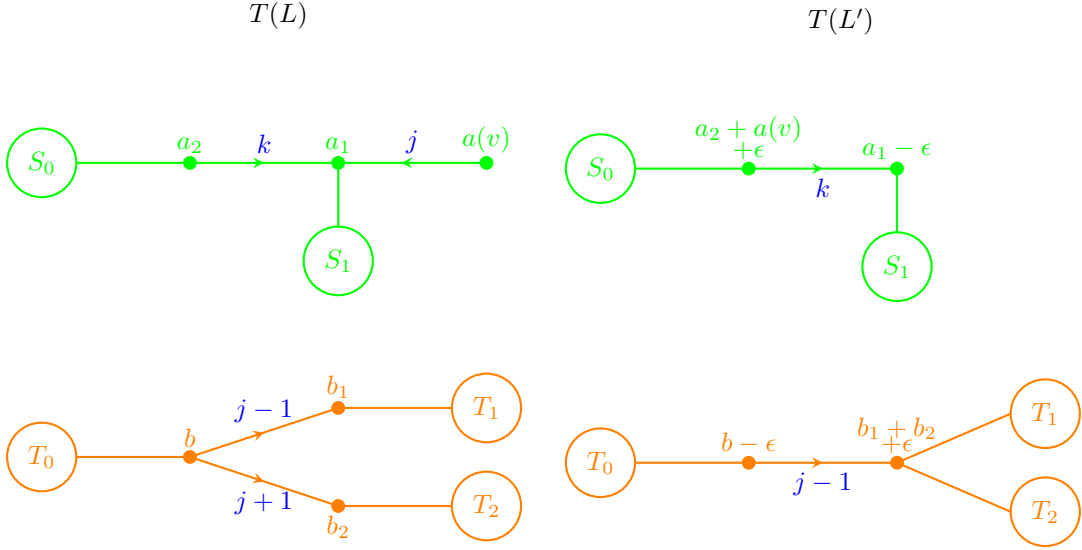

Let $\mathcal{A}$ and $\mathcal{A}'$ denote the action functionals for $C$ and $C'$.
Recall that they are defined up to an additive constant.
\begin{lem}\label{lem:action}
    After possibly adding an overall constant to $\mathcal{A}'$, the two action functionals $\mathcal{A}$ and $\mathcal{A}'$ are related as follows:
    in Case 1
    \begin{align*}
     \mathcal{A}'(s_l) =
     \begin{cases}
        \mathcal{A}(s_l)+ \frac{a(v)+\epsilon}{2} \qquad j+2\leq l \leq k,\\
        \mathcal{A}(s_l)- \frac{a(v)+\epsilon}{2}\qquad \text{else},
     \end{cases}
   \end{align*}
    and in Case 2
    \begin{align*}
     \mathcal{A}'(s_l) =
     \begin{cases}
        \mathcal{A}(s_l)- \frac{a(v)+\epsilon}{2} \qquad j+2\leq l \leq k,\\
        \mathcal{A}(s_l) + \frac{a(v)+\epsilon}{2} \qquad \text{else}.
     \end{cases}
   \end{align*}
\end{lem}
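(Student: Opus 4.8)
The plan is to reduce everything to the consecutive action differences along $L_0$ and then to isolate the two adjacencies at which the deletion actually changes them. By Remark \ref{rem:action} together with (\ref{eq:action_area}), each of $\mathcal{A}$ and $\mathcal{A}'$ is determined, up to one additive constant, by its differences $\mathcal{A}(s_{l+1})-\mathcal{A}(s_l)$ between neighbouring intersection points (for $\mathcal{A}'$ the ordering skips the deleted points $s_j,s_{j+1}$, so $s_{j+2}$ now follows $s_{j-1}$). Hence, in Case 1, the statement of the lemma is equivalent to the claim that the function $l\mapsto \mathcal{A}'(s_l)-\mathcal{A}(s_l)$ is constant on the arc $\{s_{j+2},\dots,s_k\}$ and constant on its complement, the two constants differing by exactly $a(v)+\epsilon$. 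Once this step-function behaviour is established, the freedom in the additive constant of $\mathcal{A}'$ lets me center the jump symmetrically, producing the values $\pm\frac{a(v)+\epsilon}{2}$ in the displayed formula. So it suffices to show that the consecutive differences of $\mathcal{A}'$ coincide with those of $\mathcal{A}$ except across two adjacencies, where they jump by $+(a(v)+\epsilon)$ and $-(a(v)+\epsilon)$.

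The first key step is locality. By Proposition \ref{lem:khanevsky}, $\phi$ is supported in a set $W$ consisting of a neighbourhood of the leaf $v$ together with a thin tube following the curve joining $v$ to $w$, and $L'=\phi(L)$ coincides with $L$ outside $W$; moreover $\phi$ preserves $\omega$ and hence areas. For a neighbouring pair $s_l,s_{l+1}$, the difference $\mathcal{A}(s_{l+1})-\mathcal{A}(s_l)$ is the signed area of the lune connecting them (Remark \ref{rem:action}, equation (\ref{eq:action_area})), and this area is unchanged whenever that lune is disjoint from $W$. Matching the tree modification of Figure \ref{fig:change_of_trees} to the geometry, the tube meets $L_0$ only along the segment $[s_j,s_{j+1}]$ (the base of the deleted leaf, edge $j$) and the segment $[s_k,s_{k+1}]$ (edge $k$, at $w$), while every other relevant segment, including the one realizing the edge from $a_1$ to $S_1$, lies outside $W$. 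Therefore all consecutive differences are preserved except the two touching these segments; after deletion these are the new adjacency $(s_{j-1},s_{j+2})$ and the pair $(s_k,s_{k+1})$.

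It then remains to compute a single jump. Deleting the leaf transports its area to the face adjacent to edge $k$, enlarging the lune over $[s_k,s_{k+1}]$ by exactly $a(v)+\epsilon$ (the weight $a(v)$ moved from $v$ to $w$, together with the $\epsilon$ introduced by the construction), precisely as recorded by $a_2\rightsquigarrow a_2+a(v)+\epsilon$ in Figure \ref{fig:change_of_trees}; this changes the corresponding difference by $-(a(v)+\epsilon)$. Since the consecutive differences of any action functional sum to zero around the circle $L_0$, the remaining jump at $(s_{j-1},s_{j+2})$ is forced to be $+(a(v)+\epsilon)$, which one also reads off directly from the disappearance of the leaf. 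Combined with the reduction above this proves Case 1; Case 2 is identical after reversing the orientation, the leaf now lying below $L_0$, which flips the sign of the transported signed area and hence interchanges the two signs.

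The main obstacle is the bookkeeping in the third step: one must verify, from the explicit form of $\phi$, that the transported signed area equals precisely $a(v)+\epsilon$ with the correct sign and that it is deposited at edge $k$ and nowhere else. This is exactly the point where the combinatorics of the tree change in Figure \ref{fig:change_of_trees} has to be reconciled with the geometry of the Hamiltonian isotopy, and where the distinction between Case 1 and Case 2 (leaf above versus below $L_0$) enters. The telescoping identity is what spares us from computing both jumps independently, reducing the whole calculation to the single local area change at $w$.
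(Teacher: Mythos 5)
Your overall architecture is the same as the paper's: reduce the lemma, via Remark \ref{rem:action}, to the consecutive action differences $\mathcal{A}(s_{l+1})-\mathcal{A}(s_l)$, show that only two adjacencies are affected, and fix the additive constant so the shift is split symmetrically as $\pm\frac{a(v)+\epsilon}{2}$. Your one genuine departure is attractive: the paper computes \emph{both} exceptional adjacencies directly from the identity (\ref{eq:action_diff_tree}) and the weight changes recorded in Figure \ref{fig:change_of_trees}, whereas you compute only the jump at edge $k$ and force the jump at the new adjacency $(s_{j-1},s_{j+2})$ by the cyclic telescoping identity $\sum_l\bigl(\mathcal{A}(s_{l+1})-\mathcal{A}(s_l)\bigr)=0$. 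That trick is valid and spares one computation. However, there are two concrete gaps. First, your locality step conflates two different conditions: you state that a difference is unchanged ``whenever that lune is disjoint from $W$,'' but you then only verify that the \emph{base segments} $[s_l,s_{l+1}]$ avoid $W$. These are not equivalent: for every edge separating the root from $w$ (for instance the edge joining $S_0$ to $a_2$ in Figure \ref{fig:change_of_trees}), the lune $u^l$ contains the leaf \emph{and} the entire tube in its interior, so it is not disjoint from the support, and your criterion leaves exactly those differences unverified --- and these are precisely the cases where area is shuffled around inside the lune and only the \emph{net} change vanishes. The gap is repairable: if $\phi$ fixes a neighbourhood of $[s_l,s_{l+1}]$, then $\phi$ maps the old lune onto the new one (it fixes the $L_0$-part of the boundary and carries the $L$-arc to the $L'$-arc) and $\phi$ preserves area; alternatively, argue with subtree weights as the paper does.

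Second, and more seriously for your scheme, the single jump you do compute is derived with a sign error. In the normalization of Figure \ref{fig:change_of_trees} the weight $a(v)+\epsilon$ is deposited at $w=a_2$, which lies on the \emph{root} side of $e_k$; the lune $u^k$ realizing the difference across $[s_k,s_{k+1}]$ is the subtree side $T_{v(e_k)}$, containing $a_1$, $S_1$ and formerly $v$, so it \emph{shrinks} by $a(v)+\epsilon$ rather than being ``enlarged.'' Since $s(e_k)=+1$ here, this shrinking is exactly what produces the jump $-(a(v)+\epsilon)$; your ``enlarging'' would produce $+(a(v)+\epsilon)$ unless one implicitly flips the orientation sign $s(e_k)$, which you never track --- indeed you nowhere introduce the relation $\mathcal{A}(s_{l+1})-\mathcal{A}(s_l)=s(e_l)W\left(T_{v(e_l)}\right)$ of (\ref{eq:action_diff_tree}), without which area changes cannot be converted into difference changes. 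Because your telescoping argument derives the other jump entirely from this one, the correct final answer currently emerges from two cancelling slips; you acknowledge this bookkeeping as ``the main obstacle,'' and it is exactly the step that must be carried out with the signs $s(e)$ made explicit, as in the paper's proof.
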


The rest of this section is devoted to a proof of this.

There is always a smooth lune between $s_i$ and $s_{i+1}$:
To see this, we introduce some notation.
Given an edge $e$ in $T$ we denote by $v(e)$ the vertex of the edge that is further away from the root. The subtree $T_v$ is the subtree of descendents of the vertex $v$.
Consider now the edge $e_i$ in $T$ and the subtree
$T_{v(e_i)}$. The corresponding region in $\Sigma$ is an embedded smooth lune $u^i$
between $s_i$ and $s_{i+1}$. If $e_i$ points away from the root, it is a lune from $s_{i+1}$ to $s_i$.
If $e_i$ points towards the root, it is a lune from $s_i$ to $s_{i+1}$.

The smooth lune $u^i$ has area
$W(T_{v(e_i)})$, where $W$ denotes the total weight of a weighted tree.
From equation (\ref{eq:action_area}) we deduce the following formula for the action difference between two neighboured intersection points:
\begin{align}\label{eq:action_diff_tree}
    \mathcal{A}(s_{j+1}) - \mathcal{A}(s_j) 
    = s(e_j) W(T_{v(e_j)}),
\end{align}
where the sign is determined as follows:
\begin{align*}
    s(e) = 
    \begin{cases}
      1 \qquad \, \; \; \text{ edge points away from the root} \\
      -1 \qquad \text{ edge points towards the root.}
    \end{cases}
\end{align*}

Using this formalism we can now prove Lemma \ref{lem:action}.
\begin{proof}[Proof of Lemma \ref{lem:action}]
   By Remark \ref{rem:action} it is enough to prove that $\mathcal{A}'$ defined by the formula
    in Lemma \ref{lem:action} satisfies (\ref{eq:action_diff_tree})
    for $T'$.
    We consider Case $1$. First note that $s(e_l)=s(e'_l)$ for all $l\neq j,j+1$.
    For $l \notin \{j-1,j,j+1,k\}$ we compute
\begin{align*}
    \mathcal{A}'(s_{l+1}) -\mathcal{A}'(s_{l}) &=
    \mathcal{A}(s_{l+1}) -\mathcal{A}(s_{l})\\
    &=s(e_l)W\left(T_{v(e_l)}\right) \\
    &=s(e_l')W\left(T'_{v(e_l)}\right).
\end{align*}
For $l=j-1$ first note that $s(e_{j-1})=s(e_{j+1})=s(e'_{j-1})=1$ and as it can be seen from Figure \ref{fig:change_of_trees} we have
\[
    W\left(T'_{v(e'_{j-1})}\right) = W\left(T_{v(e_{j-1})}\right) + W\left(T_{v(e_{j+1})}\right) + \epsilon.
\]
Therefore
\begin{align*}
    \mathcal{A}'(s_{j+2}) -\mathcal{A}'(s_{j-1})
        &= \left(\mathcal{A}(s_{j+2}) + \frac{a(v)+\epsilon}{2} \right)
        -\left( \mathcal{A}(s_{j-1})
        - \frac{a(v)+\epsilon}{2} \right)\\
         &= \mathcal{A}(s_{j}) -\mathcal{A}(s_{j-1})
    + \mathcal{A}(s_{j+2}) -\mathcal{A}(s_{j+1}) + \epsilon\\
      &=s(e_{j-1})W\left(T_{v(e_{j-1})}\right)
    +s(e_{j+1})W\left(T_{v(e_{j+1})}\right) + \epsilon\\
     &=s(e_{j-1}')W\left(T'_{v(e'_{j-1})}\right).
\end{align*}
Similarly for $l=k$ we see from Figure \ref{fig:change_of_trees} that $s(e_k)=s(e'_k)=1$
and
\[
    W\left(T'_{v(e'_k)}\right) = W\left(T_{v(e_k)}\right) -a(v) - \epsilon.
\]
Therefore
\begin{align*}
    \mathcal{A}'(s_{k+1}) -\mathcal{A}'(s_{k})
     &= \left( \mathcal{A}(s_{k+1})- \frac{a(v)+\epsilon}{2} \right)
     - \left( \mathcal{A}(s_{k}) + \frac{a(v)+\epsilon}{2} \right)\\
     &= s(e_k)W\left(T_{v(e_k)}\right) -a(v)- \epsilon \\
     &=s(e_k')W\left(T'_{v(e'_k)}\right).
\end{align*}
This shows that $\mathcal{A}'$ is the action functional for $(L_0,L')$. 
Case $2$ works similarly.
\end{proof}

\subsection{Effect on the barcode}
The chain maps $\Phi$ and $\Psi$ are related to the filtrations on $C$ and $C'$ as follows.
\begin{prop}\label{prop:interleaving}
    The chain maps $\Phi$ and $\Psi$ shift action by at most $\frac{a(v)+\epsilon}{2}$.
    The chain homotopy $T$ shifts action by at most $a(v)+\epsilon$.
\end{prop}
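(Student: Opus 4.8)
The plan is to verify the three action-shift bounds directly on generators, using the explicit comparison of $\mathcal{A}$ and $\mathcal{A}'$ from Lemma \ref{lem:action}. Write $c := \frac{a(v)+\epsilon}{2}$. Recall that a map $F$ \emph{shifts action by at most} $c$ means $\mathcal{A}_{\mathrm{target}}(F(x)) \leq \mathcal{A}_{\mathrm{source}}(x) + c$ for every $x$. Since the action of a $\mathbb{Z}_2$-combination of intersection points is the maximum of the actions of the points occurring in it, it suffices to bound the action of each term appearing when $F$ is applied to a single generator. By Lemma \ref{lem:action} we have $\mathcal{A}'(s_l) = \mathcal{A}(s_l) + \sigma_l c$ with $\sigma_l \in \{+1,-1\}$, where in Case $1$ one has $\sigma_l = +1$ exactly for $j+2 \leq l \leq k$ (Case $2$ is symmetric, with the roles of the two sign groups reversed).

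First I would dispose of the easy contributions. For the homotopy $T$ the only nonzero value is $T(\bar p) = \bar q$, and since the leaf $v$ is a lune from $\bar q$ to $\bar p$, equation (\ref{eq:action_area}) gives $\mathcal{A}(\bar q) = \mathcal{A}(\bar p) + a(v) \leq \mathcal{A}(\bar p) + 2c$; hence $T$ shifts action by at most $a(v)+\epsilon = 2c$. For $\Psi$ the generators $q \neq \bar q, \bar p$ are fixed and $\mathcal{A}'(q) = \mathcal{A}(q) + \sigma_l c \leq \mathcal{A}(q) + c$, while $\Psi(\bar q)=0$ is vacuous. For $\Phi$, the diagonal term of $\Phi(q') = q' + n(q',\bar p)\bar q$ satisfies $\mathcal{A}(q') = \mathcal{A}'(q') - \sigma_l c \leq \mathcal{A}'(q') + c$. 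Thus only the genuinely off-diagonal pieces remain: the sum $\sum_{p'} n(\bar q, p') p'$ in $\Psi(\bar p)$, and the term $n(q',\bar p)\bar q$ in $\Phi(q')$.

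These two pieces reduce to the same geometric estimate. For $\Psi(\bar p)$ I must show $\mathcal{A}'(p') \leq \mathcal{A}(\bar p) + c$ whenever $n(\bar q, p')=1$. Writing $p' = s_{l'}$ and using that a lune from $\bar q$ to $p'$ has nonnegative area, so $\mathcal{A}(p') \leq \mathcal{A}(\bar q) = \mathcal{A}(\bar p) + 2c - \epsilon$, the bound is automatic when $\sigma_{l'} = -1$; the remaining case $\sigma_{l'}=+1$ requires the sharper inequality $\mathcal{A}(p') \leq \mathcal{A}(\bar p)$, i.e. that the lune from $\bar q$ to $p'$ has area at least $a(v)$. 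Symmetrically, for $\Phi$ the term $\bar q$ only matters when $n(q',\bar p)=1$, and there the nontrivial case ($\sigma = -1$) reduces to showing that the lune from $q'$ to $\bar p$ has area at least $a(v)$, i.e. $\mathcal{A}(q') \geq \mathcal{A}(\bar q)$.

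The main obstacle is precisely this geometric claim: any lune witnessing $n(\bar q, p')=1$ with $p'$ in the ``$+c$'' group, respectively any lune witnessing $n(q',\bar p)=1$ with $q'$ in the ``$-c$'' group, necessarily contains the image of the minimal leaf $v$ and hence has area at least $a(v)$. To prove it I would argue that such a lune leaves $\bar q$ (respectively enters $\bar p$) through the same local quadrant occupied by $v$, using the boundary-behavior description in the Remark following Lemma \ref{lem:lune}; minimality of $v$---there is no smooth lune with image strictly inside $\mathrm{Im}(v)$---then forces the larger lune to contain $\mathrm{Im}(v)$, giving the area bound. Equivalently, in the tree language of (\ref{eq:action_diff_tree}) one checks that the subtree cut out by such a lune contains the leaf-vertex $v$, so its total weight $W$ is at least $a(v)$. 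Combining this estimate with the easy cases above yields all three shift bounds and completes the proof.
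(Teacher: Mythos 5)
Your reduction and sign bookkeeping are sound and in fact parallel the paper's own proof: there, too, the diagonal terms of $\Phi$, the terms $\Psi(q)=q$, and the homotopy $T$ are handled by Lemma \ref{lem:action} and equation (\ref{eq:action_area}), and the entire content sits in the two off-diagonal pieces $n(q',\bar p)\,\bar q$ in $\Phi$ and $\sum_{p'} n(\bar q,p')\,p'$ in $\Psi(\bar p)$. But at exactly that point your argument has a genuine gap. You assert that a lune witnessing $n(\bar q,p')=1$ with $p'$ in your ``$+c$'' group (resp.\ $n(q',\bar p)=1$ with $q'$ in the ``$-c$'' group) must leave $\bar q$ (resp.\ enter $\bar p$) through the quadrant occupied by the leaf $v$, ``using the boundary-behavior description in the Remark''. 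The Remark, however, only says that a lune leaving or entering a corner occupies one of \emph{two} opposite quadrants; it gives no criterion for which one, so it cannot by itself exclude the non-leaf quadrant. Ruling that quadrant out is precisely the content of the paper's Lemma \ref{lem:A}, whose proof needs two inputs you never supply: Lemma \ref{lem:lune} (an immersed lune cannot contain the $L$-neighbour of its corner in its $L_0$-boundary, or the immersion condition fails there) and the localization $j+1\le l\le k$ of the $L$-neighbour $s_l$ of $\bar p$, which comes from the tree structure and the choice of the vertex $w$ and the edge $e_k$. With these, one shows that $n(q',\bar p)=1$ forces $q'=s_i$ with $j+2\le i\le k$, and $n(\bar q,p')=1$ forces $p'=s_i$ with $k+1\le i\le j-1$; in other words, your ``hard cases'' are \emph{empty}, and the easy sign estimate finishes the proof. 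Your tree-language alternative suffers from the same defect: formula (\ref{eq:action_diff_tree}) attaches a subtree only to the lunes $u^i$ between neighbouring points $s_i, s_{i+1}$, and ``the subtree cut out by'' a general lune from $\bar q$ to a distant $p'$ is not a defined notion.

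Two smaller points. First, your containment mechanism is misattributed: what makes a lune entering the leaf's quadrant contain all of $\mathrm{Im}(v)$ is not minimality of $v$ among lunes, but the fact that the leaf is a connected component of $\Sigma\setminus(L_0\cup L)$; since $\partial\mathbb{D}$ maps into $L_0\cup L$ and $u$ is an immersion, the intersection of $\mathrm{Im}(u)$ with the open leaf is open and closed in the leaf, hence is all of it once nonempty. Second, granting the quadrant claim, the rest of your estimate does go through: by (\ref{eq:action_area}) every lune between a fixed pair of points has the same area $\mathcal{A}(x)-\mathcal{A}(y)$, so a single containing lune yields the bound $\geq a(v)$. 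Thus your route, once repaired by the Lemma \ref{lem:A}-type exclusion, would be a valid but more roundabout variant of the paper's argument, which never needs the area-containment step because the offending lunes simply do not exist.
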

\noindent
It follows that 
\[
    \Phi_* \colon \HF^{\leq \alpha}(L_0,L') \longrightarrow \HF^{\leq \alpha + \frac{a(v)+ \epsilon}{2}}(L_0,L)
\]
and
\[
    \Psi_* \colon \HF^{\leq \alpha}(L_0,L) \longrightarrow \HF^{\leq \alpha + \frac{a(v)+ \epsilon}{2}}(L_0,L')
\]
satisfy
\[
    \Psi_* \circ \Phi_* \colon = i'_{\alpha, \alpha + a(v)+ \epsilon} \colon
    \HF^{\leq \alpha}(L_0,L') \longrightarrow \HF^{\leq \alpha +a(v)+ \epsilon}(L_0,L')
\]
and
\[
    \Phi_* \circ \Psi_* \colon = i_{\alpha, \alpha + a(v)+ \epsilon} \colon
    \HF^{\leq \alpha}(L_0,L) \longrightarrow \HF^{\leq \alpha +a(v)+ \epsilon}(L_0,L).
\]
This is called an $\frac{a(v)+ \epsilon}{2}$-interleaving in the theory of persistence modules.
The bounds (\ref{ineq:beta}) and (\ref{ineq:gamma}) now follow from general persistence theory
\cite{UsherZhang}.
For the convenience of the reader, we include an outline of the arguments
following closely \cite[Section 4.2]{PolterovichRosenSamvelyanZhang}.

We abbreviate $\mathcal{B}= \mathcal{B}(L_0,L)$, $\mathcal{B}' = \mathcal{B}(L_0,L')$, denote by $\beta_i$, $\beta_i'$ the lengths of the finite bars in $\mathcal{B}$ and $\mathcal{B}'$ and by $\gamma$ and $\gamma'$ the spectral metrics.
The algebraic stability theorem implies that there exists an \textit{$\frac{a(v)+ \epsilon}{2}$-matching}
of the barcodes $\mathcal{B}= \mathcal{B}(L_0,L)$ and
$\mathcal{B}' = \mathcal{B}(L_0,L')$.
\footnote{The algebraic stability theorem is part of the isometry theorem, which states
that the interleaving distance on persistence modules and the bottleneck distance
on their barcodes coincide.}
This is a bijection $\mu \colon \mathcal{B}_0 \longrightarrow \mathcal{B}_0'$
of subsets $\mathcal{B}_0 \subseteq \mathcal{B}$ and $\mathcal{B}_0' \subseteq \mathcal{B}'$
satisfying
\begin{enumerate}
    \item $\mathcal{B}_0$ contains all bars from $\mathcal{B}$ of length $>a(v)+ \epsilon$,
    \item $\mathcal{B}_0'$ contains all bars from $\mathcal{B'}$ of length $>a(v)+ \epsilon$,
    \item If $\mu([a,b)) = [a',b')$ then $\vert a - a'\vert \leq \frac{a(v)+ \epsilon}{2}$
    and $\vert b-b'\vert \leq \frac{a(v)+ \epsilon}{2}$.
\end{enumerate}
Let $[c_1,\infty) \supseteq [c_2,\infty)$ be the two infinite bars of $\mathcal{B}$ 
and $[c_1',\infty) \supseteq [c_2',\infty)$ be the two infinite bars of $\mathcal{B}'$.
Then either $\mu([c_1,\infty)) = [c_1',\infty)$ and $\mu([c_2,\infty)) = [c_2',\infty)$
or
$\mu([c_1,\infty)) = [c_2',\infty)$ and $\mu([c_2,\infty)) = [c_1',\infty)$.
In the first case, one immediately sees
\[
     \gamma - \gamma'  = (c_2 - c_1) - (c_2' - c_1') = (c_2 - c_2') +(c_1' - c_1)  \leq \frac{a(v)+ \epsilon}{2} + \frac{a(v)+ \epsilon}{2} =a(v)+ \epsilon.
\]
For the second case, one computes similarly
\[
    \gamma - \gamma'  = (c_2 - c_1) - (c_2' - c_1')
     = (c_2 - c_2') + (c_1' - c_1)  \leq (c_2 - c_1') + (c_2' - c_1) \leq a(v)+ \epsilon.
\]
By symmetry it follows that $\vert \gamma - \gamma'\vert < a(v)+ \epsilon$.

The argument is similar for the finite bars.
We need to show that $\beta_k - \beta_k' \leq a(v)+ \epsilon$. 
If $\beta_k \leq a(v)+ \epsilon$ there is nothing to show.
If $\beta_k > a(v)+ \epsilon$, consider the bars $b_1, \dots, b_k$ corresponding to 
$\beta_1, \dots, \beta_k$. All of them are matched by $\mu$ to some bars
$\mu(b_1), \dots, \mu(b_k)$ from $\mathcal{B}'$ with
lengths $\alpha_1', \dots, \alpha_k'$. By the third property
of a matching, we have $\vert \beta_i - \alpha_i' \vert \leq a(v)+ \epsilon$
for $i \in \{1, \dots, k\}$.
Let $\sigma \colon \{ 1, \dots, k\} \longrightarrow \{ 1, \dots, k\}$ be the permutation that
orders the bars according to their lengths:
$\alpha_{\sigma(1)}' \geq \dots \geq \alpha_{\sigma(k)}'$.
Then
\[
    \max_i \vert \beta_i - \alpha_{\sigma(i)}' \vert \leq  \max_i \vert \beta_i - \alpha_i' \vert
    \leq a(v)+ \epsilon,
\]
where the first inequality comes from the fact, that the \textit{best matching} of a sequence of
numbers is the monotone one (\cite[Lemma 4.1.1]{PolterovichRosenSamvelyanZhang}).
It follows that
\begin{align*}
    a(v)+ \epsilon \geq \vert \beta_k - \alpha_k' \vert \geq \vert \beta_k - \alpha_k' \vert 
    \geq \beta_k - \alpha_k' \geq \beta_k - \beta_k'.
\end{align*}
By symmetry we conclude that $\vert \beta_k - \beta_k'\vert \leq a(v)+ \epsilon$.
This shows how Proposition \ref{prop:interleaving} implies the inequalities
(\ref{ineq:beta}) and (\ref{ineq:gamma}).

For the proof of Proposition \ref{prop:interleaving} we need the following
\begin{lem} \label{lem:A}
Assume Case 1. Then for any two intersection points $q, p \in P\backslash
\{\bar{q}, \bar{p}\}$
\begin{enumerate}
    \item $n(q,\bar{p}) = 1$ implies $q=s_i$ for $j+2 \leq i \leq k$.
    \item $n(\bar{q},p) = 1$ implies $p=s_i$ for $k+1 \leq i \leq j-1$.
\end{enumerate}
\end{lem}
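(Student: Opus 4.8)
The plan is to prove both statements by a local-to-global analysis of the lunes incident to $\bar p=s_{j+1}$ (for (1)) and to $\bar q=s_j$ (for (2)); I would treat (1) in detail, as (2) is identical after reversing the orientation of $\Sigma$, and Case $2$ follows verbatim with all arrows reversed. The main tool is the quadrant description from the Remark following Figure \ref{fig:two_lunes}: near an endpoint at which a lune enters it lies in the second or fourth quadrant, and near an endpoint from which it leaves it lies in the first or third quadrant.

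The first, local, step pins down the side from which a competing lune can arrive. The leaf $v$ is itself a lune entering $\bar p$; in normalized coordinates at $\bar p$ it occupies one entering quadrant, say the second, whose $L_0$-edge runs left along $[s_j,s_{j+1}]$ and whose $L$-edge runs along the branch of $L$ bounding $v$. I claim that any other lune $u$ from $q$ to $\bar p$ with $q\neq\bar q,\bar p$ must occupy the fourth quadrant. Suppose instead $u$ occupied the second quadrant. Then its $L_0$-boundary leaves $\bar p$ to the left; since there are no intersection points in the open segment $(s_j,s_{j+1})$ and $q\neq\bar q$, this boundary must pass through $s_j=\bar q$, so $\bar q\in u(\mathbb D\cap\R)$. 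Its $L$-boundary leaves $\bar p$ along the branch bounding $v$, and since $v$ is a minimal lune (a leaf of $T(L)$) the first intersection point met going back along $L$ is $\bar q$; hence in the ordering of Lemma \ref{lem:lune} we have $x_l=\bar q\notin\{q,\bar p\}$. But then $\bar q$ lies on both $u(\mathbb D\cap\R)$ and $u(\mathbb D\cap S^1)$, contradicting Lemma \ref{lem:lune}. Therefore $u$ occupies the fourth quadrant, its $L_0$-boundary runs to the \emph{right} of $\bar p$, and $q=s_i$ with $i\geq j+2$. Running the same argument at $\bar q$, where $v$ leaves, shows that a lune from $\bar q$ to $p$ must leave through the quadrant opposite to that of $v$, so its $L_0$-boundary runs to the left and $p=s_i$ with $i\leq j-1$. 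This already yields the near bounds $i\geq j+2$ in (1) and $i\leq j-1$ in (2).

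It remains to establish the far bounds, $i\leq k$ in (1) and $i\geq k+1$ in (2), and this is the step where I expect to spend the most care. Here I would use the correspondence between embedded lunes and subtrees described before Lemma \ref{lem:action}: the edge $e_k$ joining the parent region $a_1$ of $v$ to the weight-receiving vertex $w$ is the edge by which one exits $a_1$ towards $w$, and the associated embedded lune $u^k$, of area $W(T_{v(e_k)})$, has base $[s_k,s_{k+1}]$. The competing fourth-quadrant lunes entering $\bar p$ are precisely the sub-lunes nested under $a_1$ on the $\bar p$-side of this base, and their variable corner $q$ ranges exactly over $s_{j+2},\dots,s_k$; pushing $q$ past $s_k$ would force the $L$-boundary to cross the cap of $u^k$, that is, to leave the region cut off by $e_k$, which is impossible for an immersed lune ending at $\bar p$. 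The complementary arc $s_{k+1},\dots,s_{j-1}$ is then exactly the range available to the lunes leaving $\bar q$, giving (2). The delicate point, and the principal obstacle, is to make the phrase ``nested under $a_1$ on the $\bar p$-side of the base $[s_k,s_{k+1}]$'' rigorous on the cylinder, where the cap of $u^k$ winds around: this amounts to matching the cyclic order of the $s_i$ with the rooted-tree order of $T(L)$ and verifying that the cut between the two ranges falls precisely at the edge $e_k$.
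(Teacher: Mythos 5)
Your first step is sound and matches the paper's implicit use of Lemma \ref{lem:lune}: a lune entering $\bar p$ through the quadrant occupied by the leaf would have $\bar q$ as the last intersection point on its $L$-boundary while also containing $\bar q$ in its $L_0$-boundary, which Lemma \ref{lem:lune} forbids; this gives $i\geq j+2$. But the far bound $i\leq k$ --- which you yourself flag as ``the principal obstacle'' --- is a genuine gap, and the mechanism you sketch for it cannot be repaired as stated. Your argument rests on a confinement principle: that a lune with corner past $s_k$ ``would force the $L$-boundary to cross the cap of $u^k$, that is, to leave the region cut off by $e_k$, which is impossible for an immersed lune.'' Nothing forbids this: lunes are immersions, not embeddings, and both their interiors and their $L$-boundary paths may run through the cap of $u^k$ and through arbitrary regions of $\Sigma\backslash(L_0\cup L)$ --- see the immersed lune on the right of Figure \ref{fig:lune}, whose image sweeps across many regions. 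Relatedly, your claim that the corners $q$ range ``exactly over $s_{j+2},\dots,s_k$'' is false in general, so the cut between the two ranges does \emph{not} fall at $e_k$; the bound $i \le k$ holds but is in general not sharp.

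The paper closes this step with a second application of Lemma \ref{lem:lune}, this time at the $L$-end of the boundary rather than the $L_0$-end. Let $s_l\neq \bar q$ be the intersection point neighbouring $\bar p$ along $L$: the arc of $L$ leaving $\bar p$ away from the leaf runs along the boundary of the parent region of $v$ and next meets $L_0$ at $s_l$, and from the tree structure one gets $j+1\leq l\leq k$ (the segment $[s_l,s_{l+1}]$ is the first $L_0$-segment of that region's boundary after the leaf, so any edge adjacent to it other than $e_j$ --- in particular $e_k$ --- has index at least $l$). Now for any lune $u$ from $q$ to $\bar p$ entering from the right, $s_l$ is the last point of $P$ on $u(\mathbb{D}\cap S^1)$ before $\bar p$, so Lemma \ref{lem:lune} gives $s_l\notin u(\mathbb{D}\cap\R)=[\bar p,s_i]$, forcing $i\leq l\leq k$. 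In other words, the true dividing point is $s_l$, the $L$-neighbour of $\bar p$ (which may be strictly smaller than $k$), and the statement's bound by $k$ is inherited from $l\leq k$; your plan of matching cyclic order against the rooted-tree order at $e_k$ aims at the wrong cut and would not go through even with more care on the cylinder topology.
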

\begin{proof}
    Let $\bar{q} \neq s_l\in P$ be the intersection point that is
    a neighbour of $\bar{p}$ viewed on $L$.
    Then $j+1 \leq l \leq k$.
    By Lemma \ref{lem:lune} any lune $u$ entering $\bar{p}$ \textit{from the right} must have
    $s_l \notin u(\mathbb{D} \cap \R) = [\bar{p}, s_i]$. In particular,
    $j+2 \leq i  \leq l \leq k$. This shows (1).
    The proof for part (2) works similarly.
\end{proof}

\begin{proof}[Proof of Proposition \ref{prop:interleaving}.]
    We only show it for Case 1. The other case is similar. 
Let $q = s_i$ for some $i\neq j, j+1$.
Suppose $n(q,\overline{p}) = 1$. Then
by part (1) of Lemma \ref{lem:A}
$q = s_i$ for some $j+2 \leq i \leq k$. Hence $\mathcal{A}(q) = \mathcal{A}'(q) - \frac{a(v) + \epsilon}{2}$. Therefore for any $q \in P$

\begin{align*}
    \mathcal{A}(\Phi q ) &= \mathcal{A}(q + n(q,\overline{p}) \overline{q}) \\
    &=
    \begin{cases}
        \mathcal{A}(q) \qquad \qquad \qquad\qquad \qquad \qquad \qquad \text{if } n(q,\overline{p}) = 0\\
        \max \{ \mathcal{A}(q), \mathcal{A}(\overline{q})\} \, \, \, \,\qquad
        \qquad \qquad \qquad \text{if } n(q,\overline{p} \neq 0
    \end{cases}\\
    &\leq 
    \begin{cases}
        \mathcal{A}'(q) + \frac{a(v) + \epsilon}{2} \, \, \, \, \, \qquad \qquad \qquad \qquad \qquad \text{if } n(q,\overline{p}) = 0\\
        \max\{\mathcal{A}'(q)+ \frac{a(v) + \epsilon}{2}, \mathcal{A}(q)+a(v) + \epsilon\} \qquad \, \text{if } n(q,\overline{p}) \neq 0
     \end{cases}\\
     &= \mathcal{A}'(q) + \frac{a(v) + \epsilon}{2}.
\end{align*}
This shows that $\Phi$ shifts action by at most $\frac{a(v)+\epsilon}{2}$.
Similarly, $\Psi$ shifts action by at most $\frac{a(v) + \epsilon}{2}$: 
\begin{align*}
    \mathcal{A}'(\Psi \overline{p}) &= \max \{\mathcal{A}'(p) \, \vert \, n(\overline{q},p) \neq 0 \} \\
    &\leq \max \left \{\mathcal{A}(p) - \frac{a(v) + \epsilon}{2}\, \big \vert \,n(\overline{q},p) \neq 0 \right\} \\
    &\leq \mathcal{A}(\overline{q}) - \frac{a(v) + \epsilon}{2} \\
    & = \mathcal{A}(\overline{p}) + \frac{a(v) + \epsilon}{2},
\end{align*}
where the first inequality follows from part (2) of Lemma \ref{lem:A}.

Clearly, the chain homotopy $T$ shifts action by at most $a(v) + \epsilon$.
\end{proof}

%%%%%%%%%%%%%%%%%%% Section 3 - The smallest bar %%%%%%%%%%%%%%%%%%%%%%%
\section{The shortest bar.}\label{sec:smallest_bar}
The goal of this section is to show Proposition \ref{lem:main}.
As in the previous section, we only consider the pair $(L_0,L)$
for $L\in \mathcal{L}_0$. We assume that $L_0$ and $L$ intersect in at least 
$2n \geq 4$ points. Moreover, the action values $\mathcal{A}(q)$,
$q\in L_0 \cap L$, are assumed to be distinct.
Let $[a,b)$ be the shortest (finite) bar in $\mathcal{B}(L_0,L)$ and let $\bar{q},\bar{p}\in L_0 \cap L$ be the intersection points satisfying $\mathcal{A}(\bar{q}) = b$ and $\mathcal{A}(\bar{p})=a$.
The goal is to show that there is a leaf from $\bar{q}$ to $\bar{p}$.
The proof is based on the following two lemmas whose proofs will be given shortly after.
\begin{lem}\label{lem:area_of_lune}
    Let $x,y \in L_0 \cap L$ such that $n(x,y)=1$. 
    Then $\mathcal{A}(x)-\mathcal{A}(y)\geq b-a.$
\end{lem}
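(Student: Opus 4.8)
The plan is to reduce the statement to a purely algebraic feature of the filtered complex $C=\CF(L_0,L)$ and then to exploit the minimality of $b-a$ among the lengths of the finite bars. Since $n(x,y)=1$, the generator $y$ occurs in $\partial x$, and since $\partial$ strictly lowers the action we have $\mathcal{A}(y)\le \mathcal{A}(\partial x)<\mathcal{A}(x)$. Let $y'$ be the generator of highest action occurring in $\partial x$; because the intersection points form an orthogonal basis with distinct action values, $\mathcal{A}(y')=\mathcal{A}(\partial x)\ge \mathcal{A}(y)$. It therefore suffices to exhibit a single finite bar whose length is at most $\mathcal{A}(x)-\mathcal{A}(y')$: the shortest bar $[a,b)$ then satisfies $b-a\le \mathcal{A}(x)-\mathcal{A}(y')\le \mathcal{A}(x)-\mathcal{A}(y)$, which is exactly the claim.

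To produce such a bar I would run the standard persistence reduction. Order the intersection points $p_1,\dots,p_{2n}$ by increasing action and reduce the boundary matrix column by column from left to right, recording for each column its \emph{pivot}, the highest-action generator surviving in the reduced boundary; a reduced column at $p_j$ with pivot $p_i$ contributes the finite bar $[\mathcal{A}(p_i),\mathcal{A}(p_j))$, and a reduction step never raises a pivot. Now inspect the column of $x$, whose pivot before reduction is precisely $y'$. Either no earlier column carries the pivot $y'$, in which case $x$ pairs with $y'$ and yields the finite bar $[\mathcal{A}(y'),\mathcal{A}(x))$ of length $\mathcal{A}(x)-\mathcal{A}(y')$; or some column $c$ with $\mathcal{A}(c)<\mathcal{A}(x)$ already has reduced pivot $y'$, in which case $(y',c)$ is a persistence pair giving the finite bar $[\mathcal{A}(y'),\mathcal{A}(c))$ of length $\mathcal{A}(c)-\mathcal{A}(y')<\mathcal{A}(x)-\mathcal{A}(y')$. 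In either case $\mathcal{B}(L_0,L)$ contains a finite bar of length at most $\mathcal{A}(x)-\mathcal{A}(y')$, and combining with the minimality of $b-a$ finishes the proof.

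I expect the main obstacle to be the bookkeeping in this case analysis rather than any deep input. One must check that the pivot of $x$ before reduction really is $y'$ (using distinctness of the action values and orthogonality of the intersection-point basis), that the reduction never raises a pivot, and that the extracted bar is genuinely finite, i.e.\ it comes from an actual pivot pairing rather than from an unpaired generator contributing an infinite bar; note here that $\mathcal{A}(y')<\mathcal{A}(c)$ because $y'$ is the pivot of $c$, so the bar has positive length. It is worth observing that this argument uses nothing about the geometry of the lune beyond the bare fact that $y$ occurs in $\partial x$: the inequality $\mathcal{A}(x)-\mathcal{A}(y)\ge 0$ is automatic from (\ref{eq:action_area}), and everything else is the algebra of the persistence pairing already in play in Section \ref{sec:deletion}.
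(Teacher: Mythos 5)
Your proof is correct and lands the same punchline as the paper --- exhibit one finite bar trapped in the action window between $\mathcal{A}(y)$ and $\mathcal{A}(x)$, then invoke minimality of $b-a$ --- but it pins down the opposite endpoint of that bar. The paper fixes $y$ and takes the extremal element on the \emph{source} side: $x'$ of minimal action with $n(x',y)=1$, so that $\mathcal{A}(x')\leq\mathcal{A}(x)$ is the upper end of a finite bar, and minimality of $x'$ forces the Jordan basis element $e'=x'+(\text{lower order terms})$ to have $y$ as a summand of $\partial e'$, giving lower end $\tilde a=\mathcal{A}(\partial e')\geq\mathcal{A}(y)$. You fix $x$ and take the extremal element on the \emph{target} side: the pivot $y'$ of $\partial x$, with $\mathcal{A}(y')\geq\mathcal{A}(y)$, and show via column reduction that $\mathcal{A}(y')$ is the lower end of a finite bar whose upper end is at most $\mathcal{A}(x)$; your dichotomy is airtight --- either no earlier column has pivot $y'$ and $x$ pairs with $y'$ giving the bar $[\mathcal{A}(y'),\mathcal{A}(x))$, or an earlier column $c$ already carries pivot $y'$ and the pair $(y',c)$ yields a strictly shorter bar, of positive length since $\partial$ strictly lowers action. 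Since the matrix-reduction pairing and the orthogonal Jordan basis of Barannikov and Usher--Zhang used in section \ref{subsec:barcodes} are two presentations of the same canonical pairing, the two arguments are dual implementations of one idea; yours outsources the bookkeeping that the paper does by hand (choosing $x'$ minimal exactly so that $n(x'',y)=0$ for all $x''$ of lower action) to the standard reduction algorithm, at the mild cost of citing that algorithm's correctness, and, like the paper's proof, it legitimately uses the standing assumption of section \ref{sec:smallest_bar} that the action values are distinct in order to totalize the ordering and identify pivots.
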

\begin{lem}\label{lem:leaf}
     Let $u\colon \mathbb{D} \longrightarrow \Sigma$
     be a smooth lune for $(L_0,L)$.
     Then there exists a leaf $v\colon \mathbb{D} \longrightarrow \Sigma$
     such that $\mathrm{Im}(v) \subseteq \mathrm{Im}(u)$.
\end{lem}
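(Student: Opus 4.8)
The plan is to produce the desired leaf as a \emph{minimal} smooth lune contained in $\mathrm{Im}(u)$, using the fact that there are only finitely many smooth lunes for the pair $(L_0,L)$ to guarantee that a minimal one exists. Recall from Section~\ref{sec:deletion} that a leaf is by definition exactly a minimal smooth lune, i.e.\ a smooth lune $v$ admitting no smooth lune $w$ with $\mathrm{Im}(w)\subsetneq \mathrm{Im}(v)$. So it suffices to exhibit such a minimal lune whose image lies inside $\mathrm{Im}(u)$.

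First I would record the relevant finiteness. Since $L_0$ and $L$ intersect transversely, the set $P=L_0\cap L$ is finite, and by the characterisation of lunes recalled above (at most two lunes join any ordered pair of intersection points) there are only finitely many equivalence classes of smooth lunes for $(L_0,L)$. As equivalent lunes have equal image, the collection
\[
  S=\bigl\{\,\mathrm{Im}(w)\ \big|\ w\ \text{a smooth lune for }(L_0,L)\ \text{with}\ \mathrm{Im}(w)\subseteq \mathrm{Im}(u)\,\bigr\}
\]
is a finite, nonempty (it contains $\mathrm{Im}(u)$) family of subsets of $\Sigma$, partially ordered by inclusion.

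Next I would choose a minimal element. A finite nonempty poset admits a minimal element, so there is a smooth lune $v$ with $\mathrm{Im}(v)\in S$ such that no element of $S$ is strictly contained in $\mathrm{Im}(v)$. It then remains to upgrade this to global minimality: if some smooth lune $w$ (not a priori constrained to lie in $\mathrm{Im}(u)$) satisfied $\mathrm{Im}(w)\subsetneq \mathrm{Im}(v)$, then from $\mathrm{Im}(v)\subseteq \mathrm{Im}(u)$ we would get $\mathrm{Im}(w)\subseteq \mathrm{Im}(u)$, whence $\mathrm{Im}(w)\in S$, contradicting the minimality of $\mathrm{Im}(v)$ in $S$. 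Hence $v$ is a minimal smooth lune, that is, a leaf, and $\mathrm{Im}(v)\subseteq \mathrm{Im}(u)$ by construction, as required.

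The only point that truly needs to be invoked is the identification \emph{minimal smooth lune $=$ leaf}, which is precisely the correspondence established in Section~\ref{sec:deletion}, so no further work is needed. The genuine obstacle appears only if one wishes to avoid the finiteness shortcut and argue constructively: one would then have to show directly that a non-leaf lune always contains a strictly smaller lune, which is where Lemma~\ref{lem:lune} would enter, by locating an interior intersection point on a boundary arc of the lune and cutting along an arc of $L_0$ or $L$ to split off a smaller sublune. The finiteness argument bypasses this analysis entirely, and it is the route I would take.
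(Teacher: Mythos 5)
Your finiteness/poset argument is fine as far as it goes: it correctly produces a smooth lune $v$ with $\mathrm{Im}(v)\subseteq\mathrm{Im}(u)$ such that no smooth lune has image strictly contained in $\mathrm{Im}(v)$. The gap is the final identification. You assert that a leaf is ``by definition exactly a minimal smooth lune,'' but that is not the definition in the paper: a leaf is a connected component of $\Sigma\setminus(L_0\cup L)$ bounded by one connected component of $L_0\setminus L$ and one connected component of $L\setminus L_0$. Section \ref{sec:deletion} establishes only one direction of the correspondence, namely that such a region \emph{is} a minimal smooth lune. The converse --- that every minimal smooth lune is a leaf, equivalently (contrapositively) that every lune which is not a leaf strictly contains a smaller lune --- is established nowhere prior, and it is essentially the entire content of Lemma \ref{lem:leaf} itself: granting the lemma, a minimal lune $v$ contains a leaf $w$ with $\mathrm{Im}(w)\subseteq\mathrm{Im}(v)$ and minimality forces $\mathrm{Im}(w)=\mathrm{Im}(v)$; without the lemma, your minimal element of the poset $S$ could a priori be a non-leaf lune. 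So the argument is circular: the ``finiteness shortcut'' does not bypass the constructive step you flag in your closing paragraph, it only reduces the lemma to it.

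That missing step is exactly what the paper's proof supplies, and note that it is not a routine cutting argument, because lunes are only immersed, not embedded, so one cannot simply cut along an arc of $L$ or $L_0$ in the image. The paper instead works in the domain: assuming $\mathrm{Im}(u)$ contains no leaf, it considers the preimages $R_k=u^{-1}(C_k)$ of the two components $C_1,C_2$ of $\Sigma\setminus L$, uses that the boundary intervals $[\lambda_i,\lambda_{i+1}]$ alternate between $\overline{R_1}$ and $\overline{R_2}$, identifies a component of $\overline{R_2}$ whose image would be the region of a subtree $T_{v(e_j)}$ (hence would contain a leaf) unless it reattaches to the boundary further along, and thereby extracts an infinite strictly decreasing sequence of indices $j_k$ --- a contradiction. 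Some argument of this type (or the sublune-splitting analysis you sketch, carried out in the domain with attention to convex corners and the immersion condition, where Lemma \ref{lem:lune} indeed enters) is unavoidable; as written, your proposal leaves the key implication unproven.
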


Let $\mathcal{S} \subset \CF(L_0,L)$ be a Jordan basis for $\partial$
(see the definition in section \ref{subsec:barcodes}). 
We explain how Proposition \ref{lem:main} follows assuming the previous two lemmas.
\begin{proof}[Proof of Proposition \ref{lem:main}]
    As a first step towards the proof, we show that $n(\bar{q},\bar{p})=1$:
Let 
$$e = \bar{q}+ \sum_{\mathcal{A}(q) < \mathcal{A}(\bar{q})} n_{q}q\in \mathcal{S}$$
be the basis element with $\mathcal{A}(e) = b$ and 
$$f= \bar{p}+ \sum_{\mathcal{A}(p) < \mathcal{A}(\bar{p})} m_{p}p\in \mathcal{S}$$ 
be the basis element with $\mathcal{A}(f)=a$. 
Since $\partial e = f$, there exists $q' \in L_0\cap L$ with $\mathcal{A}(q') \leq \mathcal{A}(\bar{q})=b$ such that $n(q',\bar{p}) =1$. By Lemma \ref{lem:area_of_lune} it follows that $\mathcal{A}(q')=\mathcal{A}(\bar{q})$. Under the assumption that all the action values are distinct, we conclude that $\bar{q} = q'$, hence $n(\bar{q},\bar{p})=n(q',\bar{p})=1$.
In particular, there exists a unique smooth lune $u_{\bar{q}, \bar{p}}$ from $\bar{q}$ to $\bar{p}$.

Suppose by contradiction that the smooth lune $u_{\bar{q}, \bar{p}}$ is not a leaf. By Lemma \ref{lem:leaf}, there exists a leaf, say from $x$ to $y$, whose image is strictly contained in $\mathrm{Im}(u_{\bar{q}, \bar{p}})$. If $n(x,y) = 1$, then the inequality
\[
    \mathcal{A}(x) - \mathcal{A}(y) < \mathcal{A}(\bar{q}) - \mathcal{A}(\bar{p}) = b-a
\]
is a contradiction to Lemma \ref{lem:area_of_lune}. If $n(x,y) = 0$, then there are two smooth lunes from $x$ to $y$, one of them being a leaf.
It follows from Lemma \ref{lem:lune} that the only possibility for such a situation is when $L_0\cap L = \{x,y\}$. This is not the case here because there are at least $4$ intersection points.
We conclude that $u_{\bar{q}, \bar{p}}$ is a leaf from $\bar{q}$ to $\bar{p}$.
\end{proof}

\begin{proof}[Proof of Lemma \ref{lem:area_of_lune}.]
    Let $x'\in L_0 \cap L$ be a generator of $CF(L_0,L)$ satisfying
    \[
       \mathcal{A}(x') = \min \{\mathcal{A}(x'') \vert n(x'',y) = 1\}.
    \]
    By minimality of $\mathcal{A}(x')$, $\partial x'$ is not a boundary in $\CF^{<\mathcal{A}(x')}(L_0,L)$. Hence $\mathcal{A}(x')$ is an upper end of a finite bar $[\tilde{a},\tilde{b})$.
    Since $n(x,y)=1$ one has
    $\tilde{b} = \mathcal{A}(x') \leq \mathcal{A}(x)$.
    Let 
    $$e' = x' + \sum_{\mathcal{A}(x'') < \mathcal{A}(x')} l_{x''}x''\in \mathcal{S}$$ 
    be the basis element with $\mathcal{A}(e') = \tilde{b}$. Then $\partial e'$ contains $y$ as a summand because $n(x',y)=1$, but $n(x'',y)=0$ for all $x''$ with $\mathcal{A}(x'')<\mathcal{A}(x')$. In particular, $\tilde{a}= \mathcal{A}(\partial e) \geq \mathcal{A}(y)$.
    By minimality of the bar $[a,b)$ it follows that 
    \[
       b- a \leq \tilde{b} - \tilde{a} \leq \mathcal{A}(x) - \mathcal{A}(y).
    \]
\end{proof}

\begin{proof}[Proof of Lemma \ref{lem:leaf}.]
Without loss of generality, we may assume that $u$ is a smooth lune from $s_1$ to $s_l$, $3 \leq l \leq 2n$ with $u(\mathbb{D}\cap \R) = [s_1,s_l]$.
Intuitively, if there were no leaf contained in $\mathrm{Im}(u)$, whenever some part of the graph $T(L)$ enters the region of the lune, it will also leave the lune. Entering and leaving happens only on $[s_1,s_l]$, along which the two components of the graph alternate. This is not possible.

To make this argument rigorous, let us assume that $\mathrm{Im}(u)$ does not contain any leaf. We denote by $C_1$ the upper, and by $C_2$ the lower component of $\Sigma \backslash L$.
Consider the preimages $R_k:= u^{-1}(C_k)\subset \mathbb{D}$ of $C_k$ under $u$ for $k=1,2$.
See Figure \ref{fig:leaves} for an illustration of these sets.
\begin{figure}[hbt]
\begin{minipage}[t]{.5\linewidth}
  \begin{center}
\begin{tikzpicture}[thick,scale=0.4]

%%%%%%% Half Disc
\fill[orange!40] (6,0) -- (180:6cm) arc (180:0:6cm) -- cycle;
\fill[green!20] (4,0) -- (-4,0) arc (180:0:4cm) -- cycle;
\fill[orange!40] (-3,0) to [out=80,in= 180] (-2.5,1) to [out=0,in=100] (-2,0);
\fill[orange!40] (-1,0) to [out=90,in= 180] (-0.5,2) to [out=0,in=90] (0,0);
\draw[blue] (6,0) -- (180:6cm) arc (180:0:6cm);
\draw (-6,0) -- (6,0);

\fill[blue] (-6,0) circle [radius=4pt];
\fill[blue] (-4,0) circle [radius=4pt];
\fill[blue] (-3,0) circle [radius=4pt];
\fill[blue] (-2,0) circle [radius=4pt];
\fill[blue] (-1,0) circle [radius=4pt];
\fill[blue] (-0,0) circle [radius=4pt];
\fill[blue] (4,0) circle [radius=4pt];
\fill[blue] (6,0) circle [radius=4pt];
\draw[blue] (-6,0) node[below=0pt] {$\lambda_1$};
\draw[blue] (-4,0) node[below=0pt] {$\lambda_2$};
\draw[blue] (-3,0) node[below=0pt] {$\lambda_3$};
\draw[blue] (-2,0) node[below=0pt] {$\lambda_4$};
\draw[blue] (-1,0) node[below=0pt] {$\lambda_5$};
\draw[blue] (0,0) node[below=0pt] {$\lambda_6$};
\draw[blue] (4,0) node[below=0pt] {$\lambda_7$};
\draw[blue] (6,0) node[below=0pt] {$\lambda_8$};

%\draw (0,0) node[below=1pt] {$\mathbb{D} \cap \R$};
%\draw (0,6) node[above=1pt, blue] {$\mathbb{D} \cap S^1$};

\draw[green] (6,-4) node[above=0pt, right=1pt] {$ $};
\draw[->] (8,2.5) -- node[scale=1,above=0pt]{$u$} (10.5,2.5);

\end{tikzpicture}
\end{center}
  \end{minipage}%
 %\hfill %
  \begin{minipage}[t] {.5\linewidth}
   \begin{center}
\begin{tikzpicture}[thick,scale=0.5]
%oben grün
\fill[green!20] (-6,6) -- (-6,0) -- (6,0) -- (6,6) --cycle;

%unten orange
\fill[orange!40] (-6,-6) -- (-6,0) -- (6,0) -- (6,-6) -- cycle;

%Halbkurve -5 zu 5
\fill[orange!40] (-5,0) -- (5,0) arc (0:180:5cm);
\draw[blue] (-5,0) arc (180:0:5cm);

%Halbkurve -4 zu -5
\fill[green!20] (6,0) -- (-4,0) arc (180:325:5.5cm); 
\draw[blue] (-4,0) arc (180:325:5.5cm);
\fill[green!20] (-6,0) -- (-5,0) arc (360:325:5.5cm); 
\draw[blue] (-5,0) arc (360:325:5.5cm);

%Halbkurve -4 zu 1
\fill[green!20] (-4,0) -- (1,0) to [out=90,in=0] (0,3.5) to [out=180,in=90] (-4,0);
\draw[blue] (1,0) to [out=90,in=0] (0,3.5) to [out=180,in=90] (-4,0);

%Halbkurve 5 zu -3
\fill[orange!40] (5,0) -- (-3,0) arc (180:360:4cm) -- cycle;
\draw[blue] (-3,0) arc (180:360:4cm);

%Halbkurve -3 zu -2
\fill[orange!40] (-3,0) -- (-2,0) arc (0:180:0.5cm)-- cycle;
\draw[blue] (-2,0) arc (0:180:0.5cm);

%Halbkurve -2 zu -1
\fill[green!20] (-2,0) -- (-1,0) arc (360:180:0.5cm)-- cycle;
\draw[blue] (-1,0) arc (360:180:0.5cm);

%Halbkurve -1 zu 0
\fill[orange!40] (-1,0) -- (0,0) to [out=90,in=0] (-0.5,1) to [out=180,in=90] (-1,0);
\draw[blue] (0,0) to [out=90,in=0] (-0.5,1) to [out=180,in=90] (-1,0);

%Halbkurve 0 zu 3
\fill[green!20] (0,0) -- (3,0) arc (360:180:1.5cm)-- cycle;
\draw[blue] (3,0) arc (360:180:1.5cm);

%Halbkurve 3 zu 2
\fill[green!20] (3,0) -- (2,0) arc (180:0:0.5cm)-- cycle;
\draw[blue] (2,0) arc (180:0:0.5cm);

%Halbkurve 2 zu 1
\fill[orange!40] (2,0) -- (1,0) arc (180:360:0.5cm)-- cycle;
\draw[blue] (1,0) arc (180:360:0.5cm);

\fill[blue] (-5,0) circle [radius=4pt];
\fill[blue] (-4,0) circle [radius=4pt];
\fill[blue] (-3,0) circle [radius=4pt];
\fill[blue] (-2,0) circle [radius=4pt];
\fill[blue] (-1,0) circle [radius=4pt];
\fill[blue] (-0,0) circle [radius=4pt];
\fill[blue] (1,0) circle [radius=4pt];
\fill[blue] (2,0) circle [radius=4pt];
\draw[blue] (-5,0) node[below=4pt,left=-3pt] {$s_1$};
\draw[blue] (-4,0) node[above=4pt,left=-3pt] {$s_2$};
\draw[blue] (-3,0) node[below=4pt,left=-3pt] {$s_3$};
\draw[blue] (-2,0) node[below=4pt,left=-3pt] {$s_4$};
\draw[blue] (-1,0) node[above=4pt,left=-3pt] {$s_5$};
\draw[blue] (0,0) node[below=4pt,left=-3pt] {$s_6$};
\draw[blue] (1,0) node[below=4pt,left=-3pt] {$s_7$};
\draw[blue] (2,0) node[below=4pt,right=-3pt] {$s_8$};

\draw (-6,0) -- (6,0);
\draw[thin] (-6,-6) -- (-6,6);
\draw[thin] (6,-6) -- (6,6);

\draw (-4,2) node[above=0pt, blue] {$L$};
\draw (1.5,0) node[above=-2pt] {$L_0$};

\end{tikzpicture}
\end{center}
% \subcaption{Chart.}
  \end{minipage}
    \captionsetup{format=hang}
\caption{On the right, $C_1$ is coloured in green and $C_2$ in orange.
    On the left, $R_1$ is in green and $R_2$ in orange for the upper 
    smooth lune from $s_1$ to $s_8$.}
    \label{fig:leaves}
\end{figure}
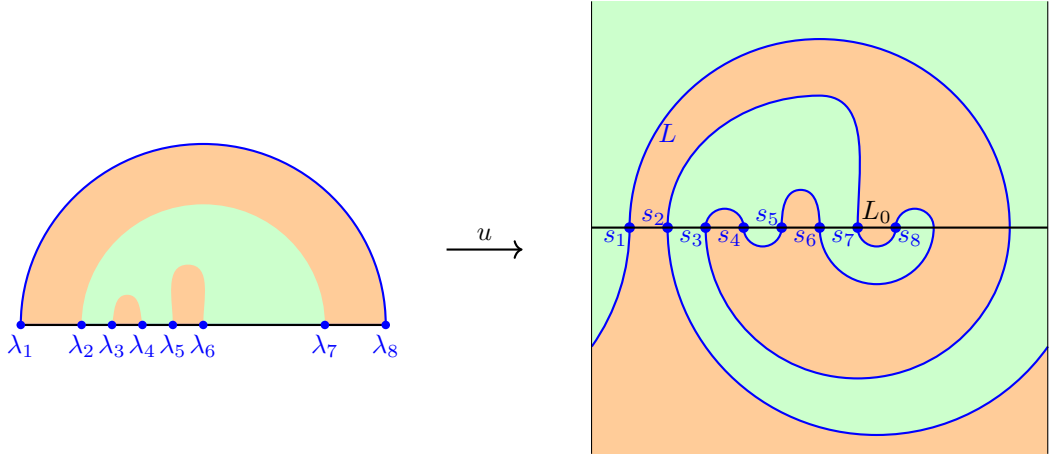

Then $R_1 \cap R_2 = \emptyset$.
Let $0 = \lambda_1 \leq \dots \leq \lambda_l =1$ be the points on
$\mathbb{D} \cap \R$ with $u(\lambda_i) = s_i$.
Then $[\lambda_i, \lambda_{i+1}] \subset \overline{R_1}$ for odd $i$
and $[\lambda_i, \lambda_{i+1}] \subset \overline{R_2}$ for even $i$.
Moreover,
$[\lambda_1,\lambda_2]$ is in the same connected component of $\overline{R_1}$ as
$[\lambda_{l-1}, \lambda_l]$.
We set $j_1 = l-1$.
Note that $j_1 \geq 3$.
We claim that there exists $4 \leq j_2 < j_1$,
such that $[\lambda_2, \lambda_3]$ and $[\lambda_{j_2}, \lambda_{j_2+1}]$
are in the same connected component of $\overline{R_2}$.
If not, consider the connected component $R$ of $[\lambda_2,\lambda_3]$
in $\overline{R_2}$. $R \cap \partial \mathbb{D} = [\lambda_2, \lambda_3]$
because $\mathrm{Int}(R)\cap R_1 = \emptyset$.
Therefore, $u$ restricts to a diffeomorphism from $R$ to $u(R)$. $\partial R \backslash [\lambda_2,\lambda_3] \subset L$ and hence $u(R)$ is a region in $\Sigma \backslash (L_0 \cup L)$ that is bounded by $[s_2,s_3]$ and part of $L$. Therefore, $u(R)$ is exactly the region corresponding to the subtree $T_{v(e_j)}$.
But then $u(R)$ contains a leaf, which contradicts our assumption. We therefore find a $j_2$ as claimed.

Proceeding like this, we obtain an infinite sequence $\{j_k\}_{k \in \N}$
of natural numbers
with $$1 < j_k < j_{k-1} < l.$$ This is impossible.
We conclude that $u$ does contain a leaf.
\end{proof}

%%%%%%%%%%%%%%%%%%%%% References %%%%%%%%%%%%%%%%%%%%%
\ \\
\bibliographystyle{aomalpha}
\bibliography{main}

\end{document}